\newtheorem{thm}{Theorem}[section]
\newtheorem*{thm*}{Theorem}
\newtheorem{dfn}[thm]{Definition} 
\newtheorem*{dfn*}{Definition}
\newtheorem{cor}[thm]{Corollary}
\newtheorem*{cor*}{Corollary}
\newtheorem{prop}[thm]{Proposition} 
\newtheorem*{prop*}{Proposition} 
\newtheorem*{properties*}{Properties} 
\newtheorem{lem}[thm]{Lemma} 
\newtheorem*{lem*}{Lemma}
\newtheorem*{claim*}{Claim} 
\newtheorem*{fact*}{Fact}
\newtheorem*{qst*}{Question}
\newtheorem*{pb*}{Problem}
\theoremstyle{remark}
\newtheorem*{algo*}{Algorithm} 
\newtheorem*{rem*}{Remark}
\newtheorem{rem}[thm]{Remark}
\newtheorem*{example*}{Example}
\newcounter{numEnonceTmpInterne}
\newenvironment{enonce*}[1]{\theoremstyle{plain}\stepcounter{numEnonceTmpInterne}%
\def\a{enoncetmp\alph{numEnonceTmpInterne}}%
\newtheorem*{\a}{#1}\begin{\a}}{\end{\a}}
\edef\@tempa#1#2{\def#1{\mathaccent\string"\noexpand\accentclass@#2 }}
\@tempa\rond{017}
\newcommand{\es}{\emptyset}
\renewcommand{\phi}{\varphi} 
\newcommand{\m} {{}^{-1}} 
\newcommand{\eps} {\varepsilon}
\newcommand {\ra} {\rightarrow}
\newcommand {\onto} {\twoheadrightarrow}
\newcommand{\ol}[1]{\overline{#1}}
\newcommand {\cala} {{\mathcal {A}}}   
\newcommand {\calc} {{\mathcal {C}}}
\newcommand {\calh} {{\mathcal {H}}}   
\newcommand {\cali} {{\mathcal {I}}}
\newcommand {\calp} {{\mathcal {P}}}
\newcommand {\cals} {{\mathcal {S}}}
\newcommand {\calx} {{\mathcal {X}}}
\newcommand {\bbZ} {{\mathbb {Z}}}   
\newcommand{\grp}[1]{\langle #1 \rangle}
\newcommand{\Out} {{\mathrm{Out}}}
\newcommand{\Inn} {{\mathrm{Inn}}}
\newcommand{\Aut} {{\mathrm{Aut}}}
\newcommand{\ad} {\mathrm{ad}}
\newcommand{\Cay} {{\mathrm{Cay}}}
\newcommand{\Inc}{\mathrm{Inc}}
\newcommand{\Z}{{\mathbb {Z}}}
\newcommand{\inc}{\subset}
\newcommand{\Rt}{$\R$-tree}
\newcommand {\R} {{\mathbb {R}}}
\newcommand {\F} {{\mathbb {F}}}
\begin{document}

\title{Random subgroups, automorphisms, splittings       
}
\author{Vincent Guirardel, Gilbert Levitt}
\date{}

\maketitle

\begin{abstract}
 We show that, if $H$ is a random subgroup of a finitely generated free group $\F_k$, only inner automorphisms of $\F_k$ may leave $H$ invariant. A similar result holds for random subgroups of toral relatively hyperbolic groups, more generally of groups which are hyperbolic relative to slender subgroups. These results follow from non-existence of splittings over slender groups which are relative to a random group element. 
Random subgroups are defined using random walks or balls in a Cayley tree of $\F_k$.  
\end{abstract}

 \section{Introduction}

 When studying an automorphism of a group $G$, it is often useful to consider invariant subgroups. For instance, irreducible automorphisms of free groups are defined by considering invariant free factors   \cite{BH_tt}. 
  
 One may also fix a subgroup $H$ and consider   
the group $\Aut(G,H)$ of automorphisms of $G$ leaving $H$ invariant, or the group of automorphisms of $H$ extending to automorphisms of $G$. There may be many of those, for instance if $H$ is a free factor or a direct factor. The authors have proved that, conversely, if $H$ is a non-cyclic subgroup of a finitely generated free group $G$, and every automorphism of $H$ extends to $G$, then $H$ is  a free factor. 
 
 At the other extreme,     
Schupp proved in \cite{Schupp_inner} that any $H$ may be embedded into a group $G$ so that only inner automorphisms of $H$ extend. The proof uses small cancellation, so $G$ is defined in an explicit, but ad hoc, way. A starting point of the present paper is the idea that this non-extension phenomenon should be fairly common, and we express this by the following principle:
 
 {\it If $H$ is a very complicated subgroup of $G$, then very few automorphisms of $G$   leave $H$ invariant.}
 
But this principle is not  valid in full generality. For instance, if $G=\Z^n$, every subgroup 
is left invariant by many automorphisms.

  In any case, $\Aut(G,H)$ will always contain the group $\Inn_H(G)<\Inn(G)$ defined as the set of all conjugations by elements of $H$.
In this paper we prove:
 
 \begin{thm} [Theorem \ref{pasdaut}] \label{autrh}
Assume that $G$ is hyperbolic relative to a finite family $\calp$ of slender subgroups. 
If $H$ is a random subgroup of $G$,  
then  $\Aut(G,H)/\Inn_H(G)$ is finite.
\end{thm}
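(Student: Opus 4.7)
The plan is to argue by contradiction, reducing to the non-existence result flagged in the abstract: there is no non-trivial splitting of $G$ over slender subgroups that is relative to $\calp$ and to a random element of $G$. Assume that $\Aut(G,H)/\Inn_H(G)$ is infinite and pick a sequence $(\phi_n)$ in $\Aut(G,H)$ representing pairwise distinct classes modulo $\Inn_H(G)$.

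The first step is to extract a non-trivial isometric action of $G$ on an $\R$-tree from the sequence $(\phi_n)$ via the Bestvina--Paulin construction adapted to groups hyperbolic relative to slender subgroups. Each $\phi_n$ preserves $H$, so we may freely pre-compose by any inner automorphism coming from $H$, and we use this freedom to choose basepoints that simultaneously minimise translation lengths of a fixed finite generating set of $G$ and of $H$. After rescaling by the associated scale factor and extracting a subsequence, the twisted actions $g \mapsto \phi_n(g)$ converge to a non-trivial minimal action of $G$ on an $\R$-tree $T$ with slender arc stabilisers, in which the subgroups in $\calp$ and the subgroup $H$ act elliptically. The Rips machine, combined with the structure theorems for $\R$-trees with slender arc stabilisers, then turns this action into a non-trivial simplicial splitting of $G$ over slender subgroups, relative to $\calp$, in which $H$ is elliptic.

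To close the argument, we exploit the randomness of $H$: it yields an element $g \in H$ that is itself a random element of $G$, for instance one of the random-walk generators used to define $H$. Because $H$ is elliptic in the splitting just produced, so is $g$, and the splitting is therefore a non-trivial slender splitting of $G$ relative to $\calp \cup \{g\}$, contradicting the main technical theorem on random elements. The chief obstacle is the first step: one must ensure that the rescaled limit remains non-trivial after the $\Inn_H$-twists, that arc stabilisers are genuinely slender in the relatively hyperbolic setting, and that the combined relative structure $\calp \cup \{H\}$ survives to the simplicial level. Much of this should rest on existing machinery for relative JSJ theory, but it has to be carried out simultaneously for the parabolic and the $H$-ellipticity requirements.
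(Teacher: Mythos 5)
Your overall strategy (automorphisms give a splitting, randomness forbids the splitting) is the right one, but the pivotal step is asserted rather than proved, and it is exactly the point the paper's argument is built to avoid. You claim that in the Bestvina--Paulin limit the subgroup $H$ acts elliptically. Nothing in your construction guarantees this: the restrictions $\phi_n|_H$ lie in $\Aut(H)$ and may be unbounded in $\Out(H)$, and pre-composing by elements of $\Inn_H(G)$ and choosing basepoints does not control the displacement of the $H$-generators relative to the rescaling constant, which is driven by the $G$-generators. Without ellipticity of $H$ (or at least of some non-trivial element of $H$) in the resulting simplicial splitting, your final contradiction with the non-splitting results evaporates: a non-trivial slender splitting of $G$ on which $H$ acts freely is perfectly compatible with Corollary \ref{premcor}. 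A secondary but real issue is at the very start: distinctness modulo $\Inn_H(G)$ does not by itself give unbounded (or even infinitely many) classes in $\Out(G)$, which the Bestvina--Paulin construction needs; bridging this uses that a random $H$ is (almost) malnormal, an input from \cite{MaSi_random} that you never invoke.

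The paper's proof handles both points differently. First, by \cite{MaSi_random} a random $H$ is free, malnormal, quasi-isometrically embedded and geometrically separated, so by \cite{AMS_commensurating} $G$ is hyperbolic relative to $\calp\cup\{H\}$; malnormality converts ``infinite index of $\Inn_H(G)$ in $\Aut(G,H)$'' into infiniteness of the relevant outer automorphism group. Second, instead of a hands-on $\R$-tree limit, it applies Corollary 7.13 of \cite{GL6} (Lemma \ref{scind}): the conclusion is a splitting over a small (hence slender, in this setting) subgroup relative to the $P_i$'s and to some $H_0\subset H$ which is either $H$ itself or merely a non-small subgroup of $H$ --- precisely because $H$-ellipticity can genuinely fail when the vertex group carrying $H$ has infinitely many automorphisms. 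The contradiction then comes from the second bullet of Corollary \ref{premcor}: a random $H$ acts freely on every non-trivial slender tree, so even a single non-trivial elliptic element of $H$ is impossible. To repair your argument you would need either to prove ellipticity of a non-trivial subgroup of $H$ in your limit tree (which is the hard case the paper routes around), or to replace your first step by the relative machinery with $H$ treated as a parabolic, as the paper does; you would also need the torsion discussion via $E(G)$ and $\bar H=HE(G)$, which your sketch omits.
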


Recall that   $G$ is slender if $G$ and all its subgroups are finitely generated.    
To define  a random subgroup of $G$, we fix $p\ge1$ and we let $H$ be generated by $p$ independent random walks of length $n$ (see Definition \ref{rw} for details). The conclusion of the theorem then holds with probability going to 1 as $n\to\infty$. We rely on results of Maher-Sisto \cite{MaSi_random} about random walks, and our assumptions are the same as in their paper (Theorem \ref{autrh} would apply to subgroups generated by elements chosen randomly independently in balls, as in Theorem  \ref{autl} below, if the results of  \cite{MaSi_random} were   known to hold   
in that context).

We believe  
that $\Inn_H(G) $ is actually equal to $\Aut(G,H)$   when $G$ is torsion-free, but our methods do not allow us to prove it unless $G$ is a free group $\F_k$.

Let $X$ be a free basis of $\F_k$.
For the standard simple random walk on $\F_k$, associated to the uniform measure on $X^{\pm1}=X\cup X\m$,  
we show that generically  $\Inn_H(G) $ is precisely equal to $\Aut(G,H)$.
More precisely, we show that a random  subgroup $H$ is $\Aut$-malnormal in the following sense:

\begin{dfn}
A subgroup $H<G$ is \emph{$\Aut$-malnormal} if  any $\alpha\in\Aut(G)$ such that $\alpha(H)\cap H\neq \{1\}$ belongs to 
  $\Inn_H(G)$.  
\end{dfn}

 \begin{thm} [Theorem \ref{libre_RW}] \label{aut_rw_Fn}
Fix $k\ge2$ and $p\ge1$. 
Let $H=\grp{w_1,\dots,w_p}\subset \F_k$ be the subgroup generated by $p$ independent simple random walks $w_1,\dots, w_p$ of length $n$.

With probability going to $1$ exponentially fast as $n\to+\infty$, the subgroup 
$H$ is $\Aut$-malnormal.
In particular, $\Aut(G,H)=\Inn_H(G)$.
\end{thm}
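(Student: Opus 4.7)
Since $\F_k$ is hyperbolic relative to the empty (hence trivially slender) family of subgroups, Theorem~\ref{autrh} applies directly and yields, with probability tending to $1$, that $\Aut(\F_k,H)/\Inn_H(\F_k)$ is finite. I would upgrade this to the stronger $\Aut$-malnormality assertion by combining it with generic properties of the simple random walk on $\F_k$. Two of these properties will suffice, each holding with probability $1-e^{-cn}$: the $w_i$'s freely generate $H$ of rank $p$, and $H$ is \emph{malnormal} in $\F_k$ (an Arzhantseva--Ol'shanskii/Jitsukawa-style statement). Malnormality immediately disposes of inner automorphisms outside $\Inn_H(\F_k)$: if $\alpha=\ad_g$ with $g\notin H$, then $\alpha(H)\cap H=gHg\m\cap H=\{1\}$, so such $\alpha$ cannot witness a failure of $\Aut$-malnormality.

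Suppose now, for contradiction, that $\alpha\in \Aut(\F_k)\setminus \Inn_H(\F_k)$ satisfies $\alpha(H)\cap H\neq\{1\}$. Any nontrivial element of the intersection produces a reduced-word identity
\[
u(w_1,\dots,w_p) \;=\; \alpha\bigl(v(w_1,\dots,w_p)\bigr)
\]
in $\F_k$, for some nontrivial reduced words $u,v$ in $p$ variables. After composing $\alpha$ on the left by a suitable inner automorphism of $\F_k$, Whitehead's algorithm restricts the class of $\alpha$ modulo $\Inn(\F_k)$ to a finite combinatorial list, each representative of which rewrites the generators of $\F_k$ by a fixed rule on $X^{\pm1}$. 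For each such representative, the identity above becomes a long-range coincidence between independent random walks with prescribed boundary modifications, and a large-deviation argument of the same flavor as that used to prove malnormality shows that, unless the substitution is a conjugation by an element of $H$, the identity fails with probability $1-e^{-cn}$. A union bound over the finite list of Whitehead types and the combinatorial shapes of $u,v$ (of complexity controlled by Theorem~\ref{autrh}) then yields the exponential decay claimed in the statement.

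The main obstacle is justifying the reduction from ``$\alpha(H)\cap H\neq\{1\}$'' to a finite list of $\alpha$ modulo $\Inn_H(\F_k)$; for the stronger condition ``$\alpha(H)=H$'' the finiteness is exactly what Theorem~\ref{autrh} provides, whereas mere nontrivial intersection \textit{a priori} admits a larger family of automorphisms. Bridging this gap requires a Stallings-graph analysis of $L=\langle H,\alpha(H)\rangle$: a nontrivial element of $H\cap\alpha(H)$ forces the folding from the wedge of the core graphs of $H$ and $\alpha(H)$ to identify significant portions of the two factors, which constrains $\alpha$ combinatorially and brings us back into the scope of the union-bound argument above. Once this reduction is carried out, the same exponential estimates for random walks that underpin \cite{MaSi_random} and Theorem~\ref{autrh} apply uniformly over the finite list of Whitehead types and conclude the proof.
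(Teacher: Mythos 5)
There is a genuine gap, and it is exactly the one you flag yourself as ``the main obstacle.'' Your reduction of an arbitrary $\alpha$ with $\alpha(H)\cap H\neq\{1\}$ to ``a finite combinatorial list of Whitehead types modulo $\Inn(\F_k)$'' is not justified and is in fact false as stated: $\Out(\F_k)$ is infinite, and peak reduction does not say that a general automorphism is, up to inner automorphisms, a single letter-permutation-type substitution. What peak reduction does give is much weaker and conditional: \emph{if} the two specific cyclically reduced elements $\bar h_1$ and $\bar h_2=\overline{\alpha(h_1)}$ are already known to be strictly Whitehead minimal, then $\alpha$ agrees, up to inner automorphisms, with a relabeling automorphism on that pair. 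Establishing this hypothesis is the heart of the paper's argument and is missing from your proposal: one must show that, generically, \emph{every} nontrivial element of $H$ (not just the generators $w_i$) has strictly Whitehead minimal cyclic reduction. The paper does this via the central tree property for the Stallings graph of $H$ together with $\eps$-equidistribution of length-2 subwords of random words (Lemmas \ref{ctp}, \ref{lem_Whmin}, \ref{sphere}, Proposition \ref{lem_PR}), and then kills the residual relabeling by a matching-subwords estimate (Lemma \ref{stand2}). Your substitute --- a ``union bound over the combinatorial shapes of $u,v$'' --- cannot work as described: the words $u,v$ in $p$ variables range over an infinite set, and their ``complexity'' is not controlled by Theorem \ref{autrh}, which says nothing about elements of $\alpha(H)\cap H$.

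Two further problems with leaning on Theorem \ref{autrh}: it only constrains automorphisms with $\alpha(H)=H$, not those with nontrivial intersection, so it cannot seed the induction you want; and it yields probability tending to $1$ with no rate (it goes through Maher--Sisto and JSJ/splitting arguments), whereas the statement you must prove requires exponential decay. The paper's proof of Theorem \ref{libre_RW} is deliberately independent of Theorem \ref{autrh} for these reasons. Your malnormality step (disposing of inner automorphisms with $g\notin H$) is fine and coincides with the paper's final step, and the Stallings-graph intuition about $\langle H,\alpha(H)\rangle$ points in a reasonable direction, but as written it is a sketch of a hoped-for reduction, not an argument; without the Whitehead-minimality input for all of $H$ and the relabeling-exclusion lemma, the proof does not go through.
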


There is a similar result if one  chooses the elements $w_1,\dots,w_p$ independently
randomly in the ball of radius $n$   (for the word metric associated to $X$). 

 \begin{thm} [Theorem \ref{libre_boule}] \label{autl}
Fix $k\ge2$ and $p\ge1$. With probability going to $1$ exponentially fast as $n\to+\infty$, the   subgroup   $H\inc \F_k$ generated by $p$ elements $w_i$ chosen randomly independently in the ball of radius $n$ is $\Aut$-malnormal (and therefore  $\Aut(G,H)=\Inn_H(G) $).
\end{thm}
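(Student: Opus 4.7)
The plan is to follow the strategy of Theorem \ref{aut_rw_Fn}, substituting direct combinatorial counting in the ball for the random-walk estimates. The proof of Theorem \ref{aut_rw_Fn} reduces $\Aut$-malnormality of $H=\grp{w_1,\dots,w_p}$ to a collection of generic combinatorial conditions on the tuple $(w_1,\dots,w_p)$: each $w_i$ should be cyclically reduced with cyclic length at least $(1-\eps)n$; its Whitehead graph should be connected without cut vertex (so that $w_i$ is Whitehead-minimal in its $\Aut(\F_k)$-orbit); no $w_i$ should be a proper power; and the $w_i$'s should share no long common subwords, even after applying any fixed Whitehead automorphism. Once this reduction is in place, the only new content for the ball model is to verify that each of these conditions holds with probability $1-e^{-\Omega(n)}$.

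For the counting, one first observes that a uniformly random element of the ball of radius $n$ has reduced length at least $n-c$ with probability $1-O((2k-1)^{-c})$, and, conditional on its reduced length being $m$, it is uniformly distributed on the $2k(2k-1)^{m-1}$ freely reduced words of length $m$. Each of the genericity statements above is then classical: for instance, the probability that the cyclic reduction of a uniform random reduced word of length $m$ has length less than $(1-\eps)m$ decays as $O((2k-1)^{-\eps m})$; the probability that the Whitehead graph misses a fixed non-forbidden edge decays as $(1-c)^{m}$ for some $c>0$; and the expected number of common subwords of length $\ell$ between two independent random words of length $m$ is at most $m^2(2k-1)^{-\ell}$, which is negligible once $\ell$ is a sufficiently large multiple of $\log m$. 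Union-bounding over the $p$ words and the finitely many relevant edges and Whitehead automorphisms preserves the exponential decay.

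The main obstacle is the reduction step itself, namely deducing $\Aut$-malnormality from the combinatorial conditions above. This presumably uses Whitehead's theorem to bound the candidate automorphisms $\alpha$ with $\alpha(H)\cap H\ne\{1\}$ to a finite list up to inner automorphisms, and then rules out all but conjugations by elements of $H$ using the absence of common subwords between the $w_i$'s and their $\alpha$-images. However, since this entire reduction is already carried out in the proof of Theorem \ref{aut_rw_Fn} and depends only on combinatorial properties of $(w_1,\dots,w_p)$ and not on how the tuple was generated, it transfers verbatim to the ball setting. Thus the new content of the present theorem lies entirely in the counting estimates described above, and the conclusion follows by combining them with the reduction from Theorem \ref{aut_rw_Fn}.
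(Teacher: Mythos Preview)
Your high-level strategy is correct and matches the paper: both Theorem~\ref{libre_boule} and Theorem~\ref{libre_RW} are deduced from a common statement (Proposition~\ref{simple}) whose hypotheses depend only on the distribution of the tuple $(w_{1,n},\dots,w_{p,n})$, and the ball model satisfies those hypotheses trivially. So the ``new content lies entirely in the counting estimates'' is exactly right.

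That said, the specific list of combinatorial conditions you extract is not quite what the paper uses, and a couple of points would not work as stated. First, Whitehead minimality of the \emph{generators} $w_i$ is not enough: the hypothesis $\alpha(H)\cap H\ne\{1\}$ only gives you arbitrary $h_1,h_2\in H$ with $\alpha(h_1)=h_2$, so peak reduction must be applied to $\bar h_1,\bar h_2$. The paper obtains strict Whitehead minimality for \emph{every} non-trivial element of $H$ (Proposition~\ref{lem_PR}), via $\eps$-equidistribution plus the central tree property of the Stallings graph. Second, once peak reduction is applied, the residual automorphism is inner composed with a \emph{relabeling} (a permutation of $X^{\pm1}$), not a general Whitehead automorphism; ruling these out is what Lemma~\ref{stand2} does. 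Third, the common-subword length used is a fixed fraction $\beta n$ of $n$, not a multiple of $\log n$; the logarithmic threshold would not survive the argument, since one must locate the matching subword inside an outer loop of the Stallings graph after discarding pieces of length up to $2\lambda n$ in the central tree.

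The paper packages all of this more cleanly than your sketch: rather than re-verifying each combinatorial condition for the ball model, it isolates three abstract hypotheses on the law of $w_{i,n}$ (radial symmetry, positive drift, subexponential growth of $|w_{i,n}|$) and proves Proposition~\ref{simple} once. The ball model obviously satisfies all three, so Theorem~\ref{libre_boule} follows with no further work.
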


 The proof  of these results uses Whitehead's peak reduction and equidistribution of subwords, so is specific to free groups.  
 
 The proof of Theorem \ref{autrh} uses the connection between automorphisms and 
 splittings (i.e.\   decompositions of $G$ as the fundamental group of a graph of groups). 
This is well-known in the context of (relatively) hyperbolic groups since Paulin's paper \cite{Pau_automorphismes} constructing an action of $G$ on an \Rt{}  for $G$   a hyperbolic group with $\Out(G)$ infinite (one then applies Rips's  theory of groups acting on \Rt s \cite{BF_stable} to get a splitting of $G$ over a virtually cyclic group).
 
 Another key idea of the present paper is a non-splitting principle. Recall that a splitting of $G$  is \emph{relative} to an element $h$ or a subgroup $H$ if $h$ (or $H$) is contained in  a conjugate of a vertex group (in other words, $h$ or $H$ fixes a point in the Bass-Serre tree).
 
Non-splitting principle: 
 {\it If $h$ is a very complicated element of a group $G$, it is universally hyperbolic: there is no splitting of $G$ relative to $h$
 (in other words, if $G$ acts on a tree with no global fixed point, then $h$ does not fix a point).
 }
 
Unfortunately this is false, even in free groups: given any $h\in \F_k$, there is an epimorphism $\F_k\onto \bbZ$ which   kills $h$, hence a splitting of $\F_k$ relative to $h$ (this splitting is over an infinitely generated group, but this may be remedied using standard approximation techniques).  By imposing conditions on edge groups,  however, one can get the following valid version of the non-splitting principle:

  \begin{thm} [Corollary \ref{premcor}]\label{pass}
 Let $G$   be a non-slender group which is  hyperbolic relative to a finite family of slender subgroups.
 \begin{itemize}
\item Let $w_n$ be given by a random walk on  $G$. 
With probability going to 1 as $n\to\infty$, there is no  splitting of $G$  over a slender subgroup  relative to $w_n$.

\item If $H$ is a random subgroup, then with probability going to 1 as $n\to\infty$ the group $H$ acts freely in every non-trivial $G$-tree with slender edge stabilizers (as in Theorem \ref{autrh}, $H$ is generated by $p$ independent random walks as in \cite{MaSi_random}).
 \end{itemize}
\end{thm}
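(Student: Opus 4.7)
The plan is to reformulate both items as a single statement: with probability tending to $1$, the random element $w_n$ (resp.\ every non-trivial element of the random subgroup $H$) is loxodromic in every non-trivial $G$-tree with slender edge stabilizers. The first item is a direct reformulation, and ``$H$ acts freely on $T$'' is by definition the condition that every non-trivial $h\in H$ is loxodromic in $T$. To handle the whole family of slender splittings at once, I would invoke the JSJ decomposition $\Tcan$ of $G$ over slender subgroups, taken relative to the peripheral structure $\calp$; this canonical tree exists for non-slender relatively hyperbolic groups with slender peripherals, and its vertex groups split into rigid vertices and flexible (QH) vertices. By the universal property of the JSJ, every non-trivial slender splitting of $G$ is dominated by a refinement of $\Tcan$ obtained by further cutting its QH vertices along essential simple closed curves on the underlying surfaces. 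Consequently, an element of $G$ is elliptic in some non-trivial slender splitting if and only if it is conjugate into a rigid vertex group of $\Tcan$ or into a proper subsurface subgroup of one of its QH vertex groups.

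For the first item I would combine this with the results of Maher--Sisto. The rigid vertex subgroups of $\Tcan$ are proper and relatively quasi-convex, so a random walk exits each of them with exponentially high probability. QH vertex subgroups are also relatively quasi-convex: if $w_n$ does not lie in a conjugate of such a subgroup we are done, while if it does, the random walk on $G$ projects to a genuinely random trajectory on the curve complex of the associated surface, whose limit is generically filling and hence not carried by any proper subsurface. A union bound over the finitely many vertex conjugacy classes in $\Tcan$ then proves the first item.

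For the second item, apply Maher--Sisto directly to $H$: generically $H$ is free of rank $p$, relatively quasi-convex, malnormal, and every non-trivial $h\in H$ is loxodromic on the coned-off Cayley graph of $G$. Since a loxodromic element is never elliptic in an acylindrical $G$-tree, and slender splittings of a relatively hyperbolic group with slender peripherals are acylindrical enough for this dichotomy, the avoidance argument of the first item applies to each non-trivial $h\in H$ and gives the freeness of the action. The main obstacle I foresee is the QH subcase of the second item: one must rule out that some non-trivial word in the generators of $H$ is supported on a proper subsurface of some QH vertex of $\Tcan$. Malnormality and quasi-convexity of $H$, combined with the filling-projection argument, should suffice, but the uniformity required --- controlling a countable family of elements of $H$ rather than a single random walk, over the finite list of QH pieces of $\Tcan$ --- is the most delicate point.
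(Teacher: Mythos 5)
There is a genuine gap at the heart of your reduction. You claim that, because every slender splitting $T$ is dominated by a refinement of the slender JSJ tree $\Tcan$ (refined at QH vertices along curves), an element of $G$ is elliptic in \emph{some} non-trivial slender splitting if and only if it is conjugate into a rigid vertex group of $\Tcan$ or into a proper subsurface subgroup of a QH vertex. Domination only transfers ellipticity in one direction: if $\hat T$ dominates $T$, then elements elliptic in $\hat T$ are elliptic in $T$; an element elliptic in $T$ may perfectly well be hyperbolic in $\hat T$ and in $\Tcan$ (vertex groups of $T$ need not be elliptic in $\Tcan$). So the dichotomy you build the whole proof on is precisely the statement that needs proof, and it is not a formal consequence of JSJ theory. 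This is exactly the difficulty the paper's Theorem~\ref{rehyp} is designed to overcome: knowing that $w_n$ (or $h\in H$) is hyperbolic in the JSJ tree $S$ is not enough; one needs the quantitative property, supplied by Theorem~\ref{MaSi} using $2$-acylindricity of $S$, that the axis of the element in $S$ contains translates of every segment in a finite list $\cali$, and then a direct Bass--Serre argument (signs of components, mixed vertices, Lemma~\ref{seppair}) shows the element stays hyperbolic in every non-trivial slender $T$ with respect to which $S$ is elliptic --- ellipticity of $S$ w.r.t.\ $T$ being the one JSJ input actually used (Theorem~\ref{main}). Your proposal never supplies a substitute for this step.

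Two further points would fail as written. First, in the second item you assert that a non-trivial $h\in H$, being loxodromic on the coned-off Cayley graph, ``is never elliptic in an acylindrical $G$-tree'' and that arbitrary slender splittings are ``acylindrical enough''; neither holds: an arbitrary slender tree $T$ need not be acylindrical at all, and loxodromicity in one acylindrical action does not imply loxodromicity in another --- the paper only uses acylindricity of the JSJ tree $S$ itself and then invokes Theorem~\ref{rehyp} for arbitrary $T$. Second, your treatment of QH pieces (``the random walk on $G$ projects to a genuinely random trajectory on the curve complex of the vertex surface'') is unjustified: a random walk on $G$ does not induce a random walk on a vertex group, and conditioning on landing in a conjugate of that subgroup destroys the Maher--Sisto setup. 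The paper does use a curve-complex-flavoured filling argument, but only in the degenerate case where the JSJ tree is a single point and $G$ itself is QH with finite fiber (the flexible case in the proof of Corollary~\ref{cor_fp}, applied again in Corollary~\ref{premcor}), where Maher--Sisto is applied to the action of all of $G$ on the hyperbolic plane and a filling closed geodesic; that case, which your proposal does not isolate, does need separate treatment, while the QH vertices of a non-trivial JSJ are handled uniformly by Theorem~\ref{rehyp} and need no surface argument.
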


JSJ decompositions of relatively hyperbolic groups are acylindrical (see   \cite{GL_JSJ}), and this allows us to apply the results of \cite{MaSi_random}. See  Corollary \ref{csa} for a similar result about torsion-free CSA groups, which also have acylindrical JSJ decompositions.

In the case of free groups, it was proved by Cashen-Manning \cite{CaMa_virtual} that $\F_k$ has no cyclic splitting relative to an element $g$ represented by a cyclically reduced word containing all reduced words of length 3 as subwords. The key technical result used to prove Theorem \ref{pass} is a generalization of this fact (see Theorem \ref{rehyp}). Our proof of Theorem \ref{rehyp} is self-contained and only uses basic Bass-Serre theory. It is inspired by ideas of Otal \cite{Otal_certaines}  and Cashen-Manning.

We also generalize Cashen-Manning's result in the following way.

\begin{thm}  [Theorem \ref{scindfn}]
 Let $k\ge2$ and $L\ge2$. Let $h\in\F_k$ be a cyclically reduced word containing all reduced words of length   $L$ as subwords. If $\F_k$ splits relative to $h$ over 
 a subgroup isomorphic to $\F_r$, then $r>(k-1)(L-2)$.
 \end{thm}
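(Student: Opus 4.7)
The plan is to argue by contradiction, assuming $\F_k$ admits a splitting over a subgroup $C\cong\F_r$ relative to $h$ with $r\le (k-1)(L-2)$, and deriving a contradiction from the richness of $h$. Let $T$ be the Bass-Serre tree of the splitting, $v_0\in T$ a vertex fixed by $h$ with vertex group $A=\Stab(v_0)$, and let $\tilde R$ denote the Cayley tree of $\F_k$ associated to the given free basis $X$ (the universal cover of the rose $R$). First, I would build an equivariant simplicial morphism $f:\tilde R\to T$ sending the identity vertex $\tilde *$ to $v_0$ and extending equivariantly, subdividing edges of $\tilde R$ as needed so that $f$ is a morphism; this setup uses only basic Bass-Serre theory, in the spirit of the proof of Theorem~\ref{rehyp}.

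Next I examine the $h$-axis $\alpha\subset\tilde R$, which is labelled cyclically by $h$. Since $h$ fixes $v_0$, the image $f(\alpha)$ is a bounded subtree of $T$ containing $v_0$, and the restriction of $f$ to a fundamental domain $\alpha_0=[\tilde *,h\tilde *]$ is a tree loop at $v_0$, hence null-homotopic. Call an edge of $\tilde R$ a \emph{crossing} if its $f$-image contains an edge of $T$ in its interior, and let $N$ count the crossings in $\alpha_0$. Since $f(\alpha_0)$ is a tree loop, $N$ is even; moreover, $N=0$ would force every prefix of $h$ (read off along $\alpha_0$) to lie in $A$, and since $h$ contains every letter of $X^{\pm 1}$, this would give $A=\F_k$, contradicting non-triviality of the splitting. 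So $N\ge 2$.

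The heart of the proof is upgrading this qualitative bound to the quantitative inequality $r\ge 1+(k-1)(L-2)$. At each vertex of $\tilde R$, the $2k$ incident edges partition into gates according to their initial $f$-direction in $T$; at a vertex lying over $v_0$, these directions correspond to edges of $T$ at $v_0$ and are constrained by the rank-$r$ edge stabilizer $C$. The richness of $h$ guarantees that each of the $2k(2k-1)^{L-1}$ reduced length-$L$ words appears as a sub-path of $\alpha$; each such sub-path has $L-1$ interior vertices whose gate structure must be compatible with the cut. Translating ``how many distinct length-$L$ gate-configurations can coexist'' into a rank inequality, via a Stallings/Euler-characteristic argument in the cover $C\backslash\tilde R$ (whose Euler characteristic equals $1-r$), should yield the desired lower bound on $r$.

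The main obstacle is this final combinatorial count: precisely bounding the number of admissible length-$L$ gate-configurations by a function of $r$, and showing that accommodating all $2k(2k-1)^{L-1}$ reduced length-$L$ words forces $r>(k-1)(L-2)$. I expect the argument to generalize the Whitehead-graph analysis underlying the cyclic case proved by Cashen-Manning, proceeding either by induction on $L$ or by a direct count over reduced length-$L$ paths in the Cayley-tree ball of radius $L$, using only basic Bass-Serre theory.
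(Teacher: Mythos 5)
Your setup (an equivariant map $f$ from the Cayley tree to the Bass-Serre tree $T$, and an eventual Euler-characteristic count in a Stallings-type quotient) points in the right direction, but the proof has a genuine gap: the entire quantitative step is left as an acknowledged ``obstacle''. The crossing count $N\ge 2$ along a fundamental domain of the $h$-axis only re-proves that the splitting is non-trivially relative to $h$; it produces no lower bound on $r$, and the proposed ``gate-configuration'' count is not carried out. Moreover, as stated the count is aimed at the wrong object: $C\backslash\tilde R$ is an infinite graph (the edge group $C\cong\F_r$ has infinite index), so ``Euler characteristic $=1-r$'' only makes sense after passing to the finite core, namely the quotient $\ell/C$ of the \emph{minimal $C$-invariant subtree} $\ell\subset\tilde R$. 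That subtree $\ell$, not the gate structure at vertices over $v_0$, is the key player, and nothing in your outline explains how the hypothesis ``$h$ contains all reduced words of length $L$'' interacts with it to force rank at least $1+(k-1)(L-2)$.

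The mechanism in the paper (following the proof of Theorem \ref{rehyp}) is exactly the piece you are missing. Fix an edge $e$ of $T$ with midpoint $m$ and stabilizer $C\cong\F_r$; the preimage $f^{-1}(m)$ lies in a bounded neighborhood of $\ell$, so components of the complement of that neighborhood acquire signs ($+$ or $-$) according to which side of $m$ they map to. Lemma \ref{2coul} then says: any element whose axis meets $\ell$ in a compact set and whose two ends have opposite signs is hyperbolic in $T$. Since the axis of (a conjugate of) $h$ contains every segment of length $L$ of the Cayley tree, two boundary vertices of $\ell$ of opposite signs must be at distance at least $L-1$ (otherwise one extends the segment joining them by one exiting edge on each side to a segment of length at most $L$ lying on a translate of the axis of $h$, contradicting ellipticity of $h$). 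Taking such a pair at minimal distance $D\ge L-1$, all intermediate vertices are non-boundary, hence have valence $2k$ in $\ell$; the sign-invariance under $C$ makes the segment inject into the finite graph $\ell/C$, which has first Betti number $r$ and no valence-one vertices. A count of valences then gives $r\ge 1+(k-1)(D-1)>(k-1)(L-2)$. Without an argument of this kind (the sign dichotomy plus the ``opposite signs are far apart in $\ell$'' step), your outline cannot produce the factor $(L-2)$, so the proof is incomplete as it stands.
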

 
 The bound is sharp (see Proposition \ref{sharp}).

\section{Notations and conventions}
 
 We will always denote by $G$ a finitely generated group. We consider actions of $G$ on simplicial trees $T$ which are \emph{minimal} (there is no proper invariant subtree). We allow the \emph{trivial} action ($T$ is a point). We write $G_v$ for the stabilizer of a vertex $v$, and $G_e$ for the stabilizer of an edge $e$.
 
 We assume that $G$ acts \emph{without inversion} (if $g\in G$ leaves an edge invariant, it fixes its endpoints), and there is no \emph{redundant vertex} (if $v$ is a vertex of valence 2, there is $g\in G$ having $v$ as its unique fixed point).
 
We equip $T$ with the simplicial metric (every edge has length 1). A \emph{segment} $I$ is the geodesic joining two vertices. The \emph{translates} of $I$ are the segments $gI$, for $g\in G$.
 
 A \emph{splitting} of $G$ is an isomorphism of $G$ with the fundamental group of a graph of groups $\Gamma$, or equivalently an action on a tree $T$ (the Bass-Serre tree of the splitting). Splittings are always assumed to be non-trivial:  vertex groups are proper subgroups of $G$ (so $T$ is not a point).   
 A splitting $\Gamma$ is \emph{over} a subgroup $H$ if $H$ is an edge group. When $H$ is cyclic, $\Gamma$ is a \emph{cyclic splitting.}
 
  An element $g\in G$, or a subgroup $H\inc G$, is \emph{elliptic} in $T$ if it fixes a point in $T$. We then say that $T$ (or the corresponding splitting) is \emph{relative} to $g$ or $H$. If $g$ is not elliptic, it is \emph{hyperbolic} and has an \emph{axis}, a line on which it acts as a translation.
  
  A tree $\hat T$ is a \emph{refinement}
 of $T$ if one obtains $T$ from $\hat T$ by collapsing each edge belonging to some $G$-invariant set to a point.

  If $T,T'$ are two trees with an action of $G$, one says that $T$ is \emph{elliptic with respect to $T'$} if
every edge stabilizer of $T$ is elliptic  in $T'$. This implies (see Proposition 2.2 of \cite{GL_JSJ}) that $T$ has a refinement $\hat T$ which \emph{dominates} $T'$, in the sense that there exists a $G$-equivariant map from $\hat T$ to $T'$.

A group   $G$ is \emph{slender} if $G$ and all its subgroups are finitely generated.   Equivalently, whenever $G$ acts on a tree, there is a fixed point or an invariant line.
   A tree with an action of $G$ is \emph{slender} if its edge stabilizers are slender.

A subgroup $H\inc G$ is  \emph{almost malnormal} if  there exists $C$ such that $gHg\m\cap H$ has   cardinality at most $C$ for all $g\notin H$.

We denote by $\F_k$ the free group of rank $k$. Given a free basis $X$, a word $w=a_1\dots a_q$ with $a_i\in X^{\pm1}$ is (freely) \emph{reduced} if $a_{i+1}\ne a_i\m$ for $i=1,\dots,q-1$,  \emph{cyclically reduced} if in addition $a_{q}\ne a_1\m$. A \emph{subword} of $w$ is a word $a_i\dots a_j$ with $1\le i\le j\le q$. The \emph{$m$-prefix} of $w$ is the word $a_1\dots a_m$.

 If $w=a_1\dots a_q$ is reduced, its \emph{length} $ | w | $ is $q$.  In general, we identify a reduced word and the corresponding element of $\F_k$.

  Any finitely generated subgroup $H\inc \F_k$ has 
a   \emph{Stallings graph}  $\Theta$.  
 It has a base vertex $1$, its edges are oriented and labelled by elements of $X$.   The     elements of $H$  are precisely the words represented by immersed   paths  
with both endpoints $1$.  One may construct    $\Theta$ by letting $H$ act on the Cayley graph  $\Cay(\F_k,X)$, restricting to 
  the convex hull of the $H$-orbit of the base vertex,
and taking the quotient by the action of $H$.

\section{A non-splitting theorem}
 
 This section is devoted to the proof of Theorem \ref{rehyp}, which restricts the ways in which a group $G$ may split   relative to a complicated enough element $h$.  Theorem \ref{pass} will be  proved in Section \ref{nsr} by combining Theorem \ref{rehyp} with results by Maher-Sisto \cite{MaSi_random}.

 \begin{thm}  \label{rehyp}
 Let $S$ be a tree with an  action of $G$. Assume that $S$ is locally finite or  slender.  
  
 There exists a finite set $\cali$  of  segments $I_i\inc S$ of length at most 4 with the following property: 
if $h\in G$ is hyperbolic in $S$ and its axis contains a translate of each $I_i$, then $h$ remains hyperbolic in every non-trivial slender tree $T$
such that  $S$ is  elliptic with respect to $T$.
 \end{thm}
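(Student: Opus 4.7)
The plan is to argue by contrapositive: assume $h\in G$ is hyperbolic in $S$ but elliptic in some non-trivial slender $G$-tree $T$ with $S$ elliptic with respect to $T$, and extract from this a short segment $I\inc S$ no $G$-translate of which lies on the axis $A_h$ of $h$ in $S$; the set $\cali$ is then built from one representative of each such $G$-orbit. Its finiteness follows from the locally-finite-or-slender hypothesis on $S$ combined with minimality: these force $S/G$ to be a finite graph, so there are only finitely many $G$-orbits of segments of length at most $4$.

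By Proposition 2.2 of \cite{GL_JSJ} one can refine $S$ to a tree $\hat S$ carrying a $G$-equivariant morphism $\pi\colon\hat S\to T$, linear on each edge. The key dynamical point is that $\pi(A_h)$ is a bounded subtree of $T$: $h$ translates $A_h\inc\hat S$ with positive length, but $h$ acts elliptically on $T$, so $\|h\|_T=0$ and any $\langle h\rangle$-orbit in $\pi(A_h)$ stays in a bounded ball. Since $A_h$ is an infinite line with bounded $\pi$-image, $\pi$ must fold $A_h$: parametrizing $A_h$ as $\bbZ$ and applying pigeonhole, there exist consecutive edges $e_{i-1},e_i$ along $A_h$ whose $\pi$-images coincide with the same edge $\epsilon$ of $T$ traversed in opposite orientations. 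In particular $G_{e_{i-1}}$ and $G_{e_i}$ both lie in the slender edge stabilizer $G_\epsilon$, and the turn $(e_{i-1},e_i)$ at their common vertex is ``foldable'' in some slender $G$-tree.

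The heart of the proof is to recognize foldable turns purely locally in $S$ and, for each $G$-orbit of a potentially foldable turn, to produce a witness segment of length at most $4$ whose presence along an axis is incompatible with that turn being folded on the axis. The witness is built by taking the turn itself (a length-$2$ segment) and padding it by one adjacent edge on each side to record the vertex-stabilizer and edge-stabilizer data needed to certify foldability independently of $T$; this gives the length bound $4$, and in the free-group case of trivial stabilizers the padding disappears, recovering the length-$3$ threshold of \cite{CaMa_virtual}. The main obstacle will be verifying that ``foldability of a turn'' is genuinely an intrinsic, $T$-independent property of a short segment of $S$ and that the corresponding witness is locally incompatible with the fold along any axis; this requires adapting the local-to-global Bass-Serre arguments of Otal \cite{Otal_certaines} and Cashen-Manning \cite{CaMa_virtual} from the free-group Cayley graph to the general $G$-tree setting.
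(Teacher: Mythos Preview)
Your finiteness claim is wrong in the slender (non-locally-finite) case, and this is not a minor point---it is exactly where the real work lies. You assert that ``there are only finitely many $G$-orbits of segments of length at most $4$'' because $S/G$ is finite. But finiteness of $S/G$ only bounds the number of orbits of \emph{edges}; if some vertex $v$ has infinite valence, its stabilizer $G_v$ may act on the link with infinitely many orbits of \emph{pairs} of incident edges, so there are infinitely many $G$-orbits of length-$2$ segments through $v$. The paper states this explicitly and devotes a separate graph-theoretic lemma to it: one builds, for each vertex $v$, a $G_v$-invariant $3$-connected graph $\Delta_v$ on the link with only finitely many $G_v$-orbits of edges, and takes $\cali$ to consist of the corresponding length-$2$ segments together with a few length-$3$ or $4$ extensions. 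Without something like this, your $\cali$ is infinite and the statement is vacuous.

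Independently of finiteness, your core mechanism is only a hope, not an argument. You find a fold on $A_h$ in the \emph{refinement} $\hat S$, not in $S$, and the folded edges need not correspond to anything readable in $S$ (they may be among the new edges added in refining). You then promise that ``foldability of a turn'' can be made an intrinsic, $T$-independent property detectable by a length-$4$ witness; but you give no definition of a foldable turn that does not already mention $T$, and no reason why the set of such turns has finitely many orbits. The paper avoids this entirely: it picks one edge $e$ of $T$, takes $\ell\subset R$ a $G_e$-invariant point or line, two-colors the complement of a neighborhood of $\ell$ by which side of $e$ it maps to, and shows (Lemma~\ref{2coul}) that any hyperbolic element whose axis enters and leaves $\ell$ with opposite colors is hyperbolic in $T$. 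The finite family $\cali$ is then engineered so that containing a translate of each $I_i$ forces such a two-colored crossing regardless of where $\ell$ sits; the $3$-connectivity lemma is precisely what guarantees a finite such family when links are infinite.
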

 
  \begin{rem} \label{petitstrucs}\ 
  \begin{itemize}
  \item Our implicit assumption that $S$ has no  {redundant vertex} is important  to bound the length of the $I_i$'s.
  \item
 If    $S$ has no vertex of valence 2, the $I_i$'s may be taken to be of length at most 3. Applying the theorem to the action of $\F_k$ on its Cayley tree yields Cashen-Manning's  theorem  \cite{CaMa_virtual}: $\F_k$ has no cyclic splitting relative to a cyclically reduced word $h$ containing all reduced words of length $\le3$ as subwords.
 \item
 The assumption that edge stabilizers of $T$ are slender may be weakened to saying that some edge stabilizer of $T$    
 is \emph{slender in $S$}:
it fixes a point  or leaves a line   invariant in $S$.
 \end{itemize}
 \end{rem}
 
 \begin{proof}  We may assume that $S$ is not a point or a line:
 the theorem is trivial if $S$ is a point, easy if $S$ is a line (in this case $S=T$).   
We may also assume that  there is  only   one orbit of edges in $T$.

 We start the proof by performing several constructions, starting with a tree $T$ 
as in the theorem.  
 The assumption that   
  $S$ is elliptic with respect to  $T$ implies  that
there exists a refinement $R$ of $S$ together with an equivariant map $f:R\ra T$ (see \cite[Proposition 2.2]{GL_JSJ} for instance).
We may assume that $f$ sends each vertex to a vertex, and each edge to  a point or an 
edge-path.

We fix an edge $e\subset T$, with midpoint $m$.
   We declare one component of $T\setminus\{m\}$ to be positive, the other negative.
We consider the set $M=f\m(m)$. It is $G_e$-invariant, contains no vertex, and $M/G_e$ is finite: if we subdivide $R$ so that the image of any edge is an edge or a point, the intersection of $M$ with a given $G$-orbit of edges consists of at most one $G_e$-orbit.

Let $\ell\inc R$ be any proper $G_e$-invariant subtree. There is one because 
  $G_e$  is slender, hence   fixes a point or leaves a line invariant, and $S$ is not a point or a line. For later use (in the proof of Theorem \ref{scindfn}), we do not assume yet   that $\ell$ is a point or a line.

We fix an integer $C$ such that $M$ is contained in the $C$-neighborhood $\ell_C$ of  $\ell$.
  Each component of $R\setminus\{\ell_C\}$ is mapped into a single component of $T\setminus\{m\}$, and we label it positive or negative accordingly.  
Any   ray $\rho\inc R$ having compact intersection with $\ell$ thus   inherits a sign (a ray is an isometric image  of $[0,+\infty)$).

\begin{lem} \label{2coul}
Let $w\in G$ be hyperbolic in $R$. Assume that   its axis  $A_w$ has compact intersection with $\ell$, and its ends have different signs. Then $w$ is hyperbolic in $T$.  
\end{lem}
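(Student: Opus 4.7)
I would argue by contradiction: suppose $w$ is elliptic in $T$, fixing a point $p$. The first step is to show that $m \in f(A_w)$. Parametrize $A_w$ isometrically by $\R$ so that $w$ acts as translation by some $L > 0$. By the sign assumption, $A_w(t)$ lies in a positive component of $R \setminus \ell_C$ for $t$ sufficiently large, and in a negative component for $t$ very negative; hence $f(A_w(t))$ lies in $T^+$ in the first case and in $T^-$ in the second, where $T^\pm$ denote the two components of $T \setminus \{m\}$ labelled according to our convention. Since $f(A_w)$ is a connected subset of $T$ meeting both $T^+$ and $T^-$, it must contain $m$. Fix $t_0 \in \R$ with $f(A_w(t_0)) = m$; by equivariance, $w^n m = f(A_w(t_0 + nL))$ for every $n \in \Z$.

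Next, $A_w \cap \ell$ is compact by hypothesis, and since in a tree the distance to a subtree grows linearly once one leaves it, $A_w \cap \ell_C$ is also a compact segment, say $A_w([a,b])$. For $n$ large enough that $t_0 + nL > b$, the point $A_w(t_0 + nL)$ lies in the positive component of $R \setminus \ell_C$, so $w^n m$ lies strictly in $T^+$; symmetrically $w^n m \in T^-$ for $n$ sufficiently negative. Now, since $w$ fixes $p$, we have $d(w^n m, p) = d(m, p)$ for all $n$. If $p \in T^-$ (so $p \neq m$), then every $x \in T^+$ satisfies $d(x, p) = d(x, m) + d(m, p) > d(m, p)$, so no $w^n m$ can lie in $T^+$, contradicting the previous observation. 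The case $p \in T^+$ is symmetric, so necessarily $p = m$, which forces $w \in G_e$ by the no-inversion assumption.

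The final step derives a contradiction from $w \in G_e$: since $G_e$ preserves $\ell$ and hence $\ell_C$, while $\langle w \rangle$ preserves $A_w$, the subset $A_w \cap \ell_C = A_w([a,b])$ is a non-empty $\langle w \rangle$-invariant compact subset of $A_w$; but $w$ acts on $A_w$ as a non-trivial translation, so every non-empty $\langle w \rangle$-invariant subset of $A_w$ is unbounded, which is the desired contradiction.

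The step requiring most care is the case analysis on the location of $p$, which combines the sign structure along $A_w$ with the fact that $m$ bisects $T$. Note that the argument uses only that $\ell$ is a subtree with $A_w \cap \ell$ compact, and not that $\ell$ is a point or a line, as the authors will want for the later application in Theorem \ref{scindfn}.
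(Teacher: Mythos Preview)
Your proof is correct and follows the same contradiction strategy as the paper, though it is packaged differently. The paper takes an arbitrary $y\in A_w$, sets $z=f(y)$, and from the fact that $w$ fixes a vertex $x$ on (say) the positive side while $w^nz$ and $w^{n+1}z$ lie on the negative side for large $n$, deduces via a tripod/separation argument that $w$ fixes $m$; the contradiction is then obtained directly from signs (all $w^nz$ are forced to the negative side, against the hypothesis on the positive end). You instead first locate a preimage of $m$ on $A_w$, track the orbit $w^n m$, and use constancy of $d(w^n m,p)$ to exclude $p\in T^\pm$; your final contradiction comes from the $G_e$-invariance of $\ell_C$ rather than from signs. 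Both routes are short and natural. One small simplification: since an elliptic element in a simplicial tree acted on without inversion always fixes a vertex, you may take $p$ to be a vertex from the start; then ``$p=m$'' is already impossible because $m$ is the midpoint of an edge, and your last paragraph becomes unnecessary (though it is correct as written).
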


\begin{proof}
Assume that $w$ fixes a vertex $x$ in $T$, say   in the positive component of $T\setminus\{m\}$. Let $y$ be any point of $A_w$,  
and $z=f(y)$. Replacing $w$ by $w\m$ if needed, we may assume that $w^ny$  goes to the  negative end of $A_w$ as $n\to+\infty$. Then $w^nz=f(w^ny)$ and $w^{n+1}z=w(w^nz)$ are in the negative component of $T\setminus\{m\}$ for $n$ large, and $wx=x$ with $x$ in the positive component  implies that $w$ fixes $m$.  We deduce that $w^nz$ is in the negative component for every $n$ (positive or negative), and both ends of the axis of $w$ in $R$ are negative, a contradiction.
\end{proof}

We say that a vertex $p\in\ell$ is a \emph{boundary vertex} if there is at least one edge incident on $p$ which is not contained in $\ell$
(if $\ell$ is a line, this means that $p$ is a vertex of $\ell$ having  valence $\ge3$ in $R$). 
Given a boundary vertex $p\in\ell$, we consider   rays $\rho$  with origin $p$ such that $\rho\cap\ell=\{p\}$. Such rays have a sign (positive or negative). 
We say that $p$ is \emph{positive} (resp.\  \emph{negative}) if  all rays  $\rho$ with origin $p$ such that  $\rho\cap\ell=\{p\}$ are positive (resp.\  negative),
and \emph{mixed} otherwise  (see Figure \ref{fig_couleurs}).

\begin{figure}[htbp]
  \centering
  \includegraphics{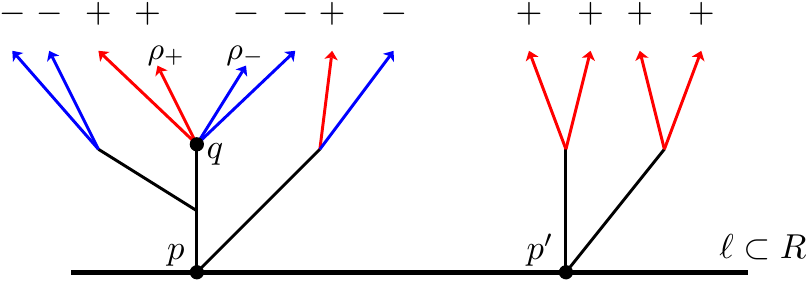}
  \caption{$p$ is a mixed vertex, $p'$ is positive.}
  \label{fig_couleurs}
\end{figure}

We shall distinguish two cases.

Case 1: $\ell$ contains a mixed vertex $p$.
In this case, we can find a vertex $q$ in $\ell_C$, with projection to $\ell$ equal to  $p$, such that 
 both a positive ray $\rho_+$ and a negative ray $\rho_-$ with origin $p$ pass through $q$, and the sign of a ray $\rho$ passing through $q$ only depends on the   edge   through which $\rho$ exits $q$
  (one can take for $q$ a point projecting to $p$, and furthest from $p$ with the property that there are rays of both signs
with origin $p$ passing through  $q$).
  
Case 2: $\ell$ has no mixed vertex (this cannot happen if $\ell$ is a point). 
In this case, each boundary vertex $p\in\ell$ inherits a sign, and both signs occur in $\ell$.

After these preliminary constructions, we recall that $S$ is locally finite or has slender edge stabilizers. 
We first suppose that it is locally finite. Vertex stabilizers of $S$ then contain an edge stabilizer with finite index, hence are elliptic in any $T$ as in the theorem. This implies that $S$ dominates $T$, so 
we may     take $R=S$ and view $\ell$ as a subtree of $S$. 

We   define a finite set $\cali$ by choosing a representative for each $G$-orbit of segments of length 4 in $S$. 
We consider  $T$ as in the theorem, $h$ whose axis in $S$ contains a translate of each segment of length 4, and we show that $h$  is hyperbolic in $T$.  We have distinguished two cases (depending on $T$ and $\ell$).

In case 1, some translate of the axis of $h$ passes through $q$ and contains the exit edges     of $\rho_+$ and $\rho_-$.   Lemma \ref{2coul} implies that some conjugate of $h$, hence also $h$ itself, is hyperbolic in $T$.

In case 2, 
 we recall that  we may take $\ell$ to be a line, so  $\ell $ contains   a positive $p_+$ and a negative $p_-$ which are  at distance 1 or   2   (at distance 1 if   all vertices of $\ell$  are boundary vertices); indeed, since $S$ has no redundant vertex and is not a line,
there are no adjacent vertices of valence 2 in $S$.
 The intersection of some translate of the axis of $h$ with $\ell$ is  precisely the segment $p_+p_-$, and hyperbolicity of $h$ follows from Lemma \ref{2coul}.

The argument when $S$ has slender edge stabilizers but is not locally finite is more complicated because there  may be infinitely many $G$-orbits of segments of length $\le4$. Also, we may have to take $R\neq S$ (and $R$ depends on $T$), but this issue  is   easily dealt with.

In order to  construct a suitable finite  family $\cali$ (independent of $T$) we use the case $k=3$ of the following lemma, whose proof we defer. 

\begin{lem} \label{seppair}
Let $k\ge1$. 
Let $H$ be a finitely generated group acting on an infinite set $\calx$ with finitely many orbits. 
Assume that    all point stabilizers $H_x$ are slender. The action of $H$ on $\calx$ extends to an action on a graph $\Delta$ with vertex set $\calx$ such that:
\begin{itemize}
\item there are finitely many $H$-orbits of edges in $\Delta$;
\item $\Delta$ is $k$-connected: it   cannot be disconnected by removing $k-1$ vertices \emph{(we use terminology from graph theory: 1-connected means connected, 3-connected means that there is no separating pair).}

\end{itemize}
\end{lem}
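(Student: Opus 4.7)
The plan is to construct $\Delta$ as an enriched Cayley--Schreier-type graph, parametrized by a finite subset $B\subset H$ whose size grows with $k$. First I handle the case $k=1$ by a standard argument, then for larger $k$ I take $B$ to be a sufficiently large ball in $H$.

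\emph{Case $k=1$.} Fix a finite generating set $S$ of $H$ and representatives $x_1,\ldots,x_r$ of the $H$-orbits on $\calx$. Let $\Delta$ have vertex set $\calx$ and edge set the $H$-orbit of
\[
E_0 = \bigl\{\{x_i,x_j\} : i\neq j\bigr\} \cup \bigl\{\{x_i,sx_i\} : s\in S,\ sx_i\neq x_i\bigr\}.
\]
Then $\Delta$ has at most $|E_0|$ orbits of edges and is connected: any $y=hx_i$ is joined to $x_i$ by the path obtained by translating the prefix-edges of an $S$-word expressing $h$, while the cross-orbit edges link the different $H$-orbits on $\calx$.

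\emph{Case $k\ge 2$.} Take $B\subset H$ to be the $S$-ball of radius $N$ in $H$, for $N$ to be chosen large. Enlarge $E_0$ to
\[
E_B = \bigl\{\{x_i,x_j\} : i\neq j\bigr\} \cup \bigl\{\{x_i,hx_i\} : h\in B,\ hx_i\neq x_i\bigr\},
\]
and let $\Delta_B$ be the $H$-invariant graph with edge set $H\cdot E_B$. The orbit count is at most $r|B|+\binom{r}{2}$, still finite. To see that $\Delta_B$ is $k$-connected for $N$ large, restrict to an infinite orbit $Hx_i\cong H/H_{x_i}$: the induced subgraph is the Schreier coset graph $\Cay(H/H_{x_i},B)$. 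After removing any $k-1$ vertices $F$, the underlying Schreier graph $\Cay(H/H_{x_i},S)$ breaks into finitely many components; I plan to show that for $N$ larger than a bound depending only on $k$, any two such components contain vertices at $S$-distance at most $N$ in $H/H_{x_i}$, hence are joined by an edge of $\Delta_B$ avoiding $F$. Cross-orbit edges handle connectivity between distinct orbits.

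\emph{Main obstacle.} The technical heart of the proof is the quantitative bypass-lemma: that large balls in $H$ yield Schreier graphs which remain connected after removing an arbitrary $(k-1)$-subset, with the required radius bounded in terms of $k$ alone. This is where slenderness of $H_{x_i}$ becomes essential: it constrains the coarse geometry of $H/H_{x_i}$ (every subgroup of a slender group is finitely generated, yielding uniform control on how components in the Schreier graph can ``hide'' behind $F$), which is precisely what allows the bypass construction to proceed with a uniform radius. Verifying this quantitative statement case-by-case is straightforward for the application with $k=3$, but the general statement requires the geometric control afforded by slenderness.
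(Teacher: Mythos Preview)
Your overall strategy—thickening to a ball $B_N$ in $H$—does lead to a proof, and one rather different from the paper's. But the proposal as written has a false step and misidentifies where slenderness enters. The claim that any two components of $\Cay(H/H_{x_i},S)\setminus F$ contain vertices at $S$-distance $\le N$ (for $N$ depending only on $k$) is wrong: in $\Cay(F_2,S)$ with $F=\{1,a^{100}\}$ (so $k=3$), the component containing $a^{-1}$ and the one containing $a^{101}$ are at distance $102$, so no uniform $N$ gives a direct $B_N$-edge between them. What \emph{is} true, and suffices, is that you can chain through intermediate components: collapse each component of the Schreier graph minus $F$ to a point, keeping the vertices of $F$, to get a connected quotient graph; along a simple path in this quotient between two given components, each maximal run of $F$-vertices has length at most $k-1$, so consecutive components along the path contain points at $S$-distance $\le k$. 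Thus $N=k$ already works—and this argument uses nothing whatsoever about $H_{x_i}$, so slenderness is irrelevant to your approach, contrary to your final paragraph. (Your cross-orbit step also needs tightening: a single $H$-orbit of edges $\{x_i,x_j\}$ does yield enough surviving cross-edges after deleting $F$, but the reason is B.\,H.~Neumann's lemma on coset coverings, which you should invoke.)

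For comparison, the paper argues by induction on $k$. Starting from a connected $\Delta_1$, it adds finitely many $H$-orbits of edges so that every vertex link becomes $(k-1)$-connected (applying the inductive hypothesis to the action of $H_x$ on the link $L_x$; this is precisely where slenderness is used, to guarantee $H_x$ is finitely generated), and then further enlarges so that every edge lies in a $(k-1)$-clique; a short path-rerouting argument then gives $k$-connectedness. Your ball approach, once repaired, is more elementary and in fact shows that the slenderness hypothesis in the lemma is superfluous—something not visible from the paper's inductive proof.
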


 Let $v$ be a vertex of $S$. 
We consider 
the action of its stabilizer $G_v$ (which is finitely generated because $G$ and edge stabilizers of $S$ are) on the link $L_v$ of $v$ in $S$ (the set of incident edges). Point stabilizers for this action are edge stabilizers of $S$, hence slender.     We apply the lemma with $k=3$. 
We get  a graph $\Delta_v$ with vertex set $L_v$ and no separating pair (if $L_v$ is finite, we let $\Delta_v$ be the complete graph with vertex set $L_v$). Since $G_v$ acts on $\Delta_v$, we may perform this construction $G$-equivariantly for all vertices $v$ of $S$.

Edges of $\Delta_v$ join  two elements of the link of $v$, we view them as segments of length 2 centered at $v$ in $S$.  Considering these segments for every $v$, we obtain  a family of segments  of length 2 consisting of finitely many 
$G$-orbits, and we include a representative of each orbit in $\cali$. 

We also consider representatives for $G$-orbits of edges of $  S$ bounded by two vertices   having valence at least 3, and for orbits of segments of length 2 whose midpoint has valence 2 (this is a finite set of orbits). For each such $\varepsilon$ we choose two extensions $\varepsilon_1\varepsilon\varepsilon_2$ and $\varepsilon'_1\varepsilon\varepsilon'_2$ of $\varepsilon$ to segments of length 3 or 4  respectively, with edges $\varepsilon_i\ne \varepsilon'_i$. We then add to $\cali$ the four segments  $\varepsilon_1\varepsilon\varepsilon_2$, $\varepsilon_1\varepsilon\varepsilon'_2$, $\varepsilon'_1\varepsilon\varepsilon_2$, $\varepsilon'_1\varepsilon\varepsilon'_2$.

Having constructed $\cali$, we now consider $T$ as in the theorem and $h$ whose axis in $S$ contains a translate of each $I_i$  in $\cali$, and we show that $h$ is hyperbolic in $T$.     
We first assume that $R=S$.  Since $G_e$ is slender
we may assume that $\ell$ is  a point or a line, and we consider the two cases introduced above.

In case 1,  we fix a mixed vertex $p\in\ell$. Recall that we have defined a vertex $q\in\ell_C$   projecting to $p$. Let $L_q$ be the link of $q$ in $S$ (the set of incident edges). We first define one or two special incident edges  at $q$. If $q\notin\ell$ (i.e.\ if $q\ne p$), the edge pointing towards $p$ is the only special edge. If $q\in\ell$ and $\ell$ is a line, both edges contained in $\ell$ are special. There is no special edge if $q\in\ell$ and $\ell$ is  a point. 

 Because of the way we defined $q$, non-special incident edges  $\zeta  $ at $q$  may be given a sign: they are   positive or negative, depending on whether rays with origin $p$ exiting $q$ through $\zeta$ are positive or negative,  and both signs occur. 
 
Using Lemma  \ref{seppair}, 
we have constructed a graph $\Delta_q$ with vertex set $L_q$ having no separating pair. 
This graph remains connected when we remove   the vertices corresponding to 
the (at most two) special edges. The remaining vertices correspond to incident edges  $\zeta$ at $q$ which are positive or negative.  Since   both   signs occur,   we may find a positive edge $\zeta_+$ and a negative edge $\zeta_-$ which are adjacent in $\Delta_q$. Because of the way we constructed $\cali$, some translate of the axis of $h$ in $S$ contains $\zeta_+\cup\zeta_-$ and Lemma \ref{2coul} implies that $h$ is hyperbolic in $T$, as required.

In case 2,  as in the locally finite case,  $\ell$ contains  a positive $p_+$ and a negative $p_-$ which are either adjacent or at distance   2 (separated by a vertex of valence 2). We included  four extensions of $\varepsilon=p_+ p_-$ in the $G$-orbit of  $\cali$, and one of them at least  intersects $\ell$ only along $p_+ p_-$. Some translate of the axis of $h$ contains this extension, and  $h$ is hyperbolic in $T$ by Lemma \ref{2coul}.

  To complete the proof of Theorem \ref{rehyp}, we need to consider the case when $R\ne S$. Let $\pi:R\to S$ be a collapse map.   Note that, if $\varepsilon$ is any open edge of $R$, both components of $R\setminus\{\varepsilon\}$ have unbounded image in $S$.
We define $\bar \ell=\pi(\ell)$, a point or a line,   and $\bar\ell_C$ its $C$-neighbourhood in $S$. 
The sign assignment of components of $R\setminus\{\ell_C\}$ induces one for components of $S\setminus\{\bar\ell_C\}$, with both signs appearing. Lemma \ref{2coul} applies in $S$  because if the axis of $w$ in $S$ has compact intersection with $\bar \ell$ and
its ends have two different signs, then the same holds for the axis of $w$ in $R$,
so the rest of the proof is the same as when $R=S$.
 \end{proof}
 
 \begin{proof}[Proof of Lemma \ref{seppair}]
The proof is by induction on $k$. If $k=1$, we just need $\Delta$ to be connected. This is easy to achieve, using finite generation of  $H$ and finiteness of $\calx/H$.

In the general case, we construct  $\Delta_1\inc \Delta_2\inc  \Delta_3=\Delta$
  by successively adding $H$-orbits of edges (each $\Delta_i$ is a graph with vertex set $\calx$ on which $H$ acts with finite quotient). At each step we specify a finite set of edges, and we obtain $\Delta_{i+1}$ from $\Delta_i$ by adding the $H$-orbits of these edges. 
  
As explained above, we may find a   connected graph $\Delta_1$. Given  an element   $x \in \calx$ (which we view as a vertex of $\Delta_1$),  
we view its link in $\Delta_1$ as the set of vertices adjacent to $x$. It  is itself a graph $L_x$ (possibly with no edge):  
there is an edge between $y$  and $y'$ in $L_x$ if and only if there is one in $\Delta_1$.
The stabilizer $H_x$ acts naturally on this graph $L_x$.

It is easy to check that $L_x/H_x$ is finite, so by induction we may add finitely many $H_x$-orbits of edges to $L_x$ in order   to make it  $(k-1)$-connected (if $L_x$ is finite, we make it a complete graph). We view these added edges as edges between elements of $\calx$, and since $\calx/H$ is finite we obtain a connected $\Delta_2$ with the property that all links of vertices are $(k-1)$-connected (or complete finite graphs).
 
 We now enlarge $\Delta_2$ in order to obtain $\Delta_3$ with the additional property that each edge is contained in a $(k-1)$-simplex (a complete subgraph with $k$ vertices). We claim that $\Delta=\Delta_3$ is then $k$-connected.
 
 Fix a subset $\calx_0$ of cardinality $k-1$ in $\calx$. 
  We must be able  to join any two  vertices $x,y$ in $\calx\setminus \calx_0$  by a path in $\Delta$ avoiding $\calx_0$. Since $\Delta$ is connected, we may find a path from $x$ to $y$. It suffices to consider the case when this path is of the form $xz_1\dots z_py$ with the $z_i$'s distinct elements of $\calx_0$.

  First suppose $p=1$, so that $x$ and $y$ belong to the link of $z_1$, which is $(k-1)$-connected. The intersection of this link with $\calx_0$ has cardinality at most $k-2$, so we may join $x$ to $y$ in the complement of  $\calx_0$. If $p\ge2$, we consider the edge $z_1z_2$. It is contained in a $(k-1)$-simplex, which has $k$ vertices so contains a vertex $z\notin \calx_0$. We then replace the path $xz_1\dots z_py$ by the concatenation of $xz_1z$ and $zz_2\dots z_py$ and use induction on $p$.
  \end{proof}

\section{Splittings of free groups}

 We view $\F_k$ as the set of reduced words on a set $X$ of cardinality $k$. 
 \begin{thm}  \label{scindfn}
 Let $k\ge2$ and $L\ge2$. Let $h\in\F_k$ 
 be a cyclically reduced word containing all reduced words of length   $L$ as subwords. If $h$ is elliptic in a splitting of   $\F_k$ over a subgroup isomorphic to $\F_r$, then $r>(k-1)(L-2)$.
 \end{thm}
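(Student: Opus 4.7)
My plan is to follow the strategy of Theorem \ref{rehyp} closely, applied to $S$ the Cayley tree of $\F_k$ ($2k$-regular, locally finite, with trivial edge stabilizers, so that $S$ is automatically elliptic with respect to $T$). Subdivide if needed so that the equivariant map $f\colon R\to T$ is simplicial, take $R=S$, and let $\ell\inc R$ be the minimal $G_e=\F_r$-invariant subtree. Since $\F_r$ acts freely on $S$, the quotient $\Theta=\ell/\F_r$ is the Stallings graph of $\F_r<\F_k$: a finite graph with $\chi(\Theta)=1-r$, every vertex of valence at most $2k$ and at least $2$ (the latter by minimality of $\ell$). Define signs on components of $R\setminus\ell_C$ as in the proof of Theorem \ref{rehyp}, so that every boundary edge $e=vv'$ of $\ell$ (with $v\in\ell$, $v'\notin\ell$) inherits a sign in $\{+,-\}$.

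The heart of the argument is a sign-matching assertion combining the rich-subword hypothesis with Lemma \ref{2coul}: for any two distinct boundary edges $e_i=v_iv_i'$ of $\ell$ satisfying $d_\ell(v_1,v_2)\leq L-2$, the signs of $e_1$ and $e_2$ coincide. Indeed, the path
\[
\pi \;=\; v_1' - v_1 - \bigl[\ell\text{-geodesic from } v_1 \text{ to } v_2\bigr] - v_2 - v_2'
\]
is a reduced path in $S$ of length $d_\ell(v_1,v_2)+2\leq L$, so by the hypothesis that $h$ contains every reduced word of length $L$, $\pi$ occurs as a subpath of some $\F_k$-translate $gA$ of the axis $A$ of $h$. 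Since $gA$ is a line containing the points $v_1',v_2'\notin\ell$ (so $gA\not\subset\ell$) and $\ell$ is a convex subtree of $S$, the intersection $gA\cap\ell$ is a compact subsegment of $gA$; as $gA$ enters $\ell$ at $v_1$ through $e_1$ and exits at $v_2$ through $e_2$, that subsegment is exactly $[v_1,v_2]$. Lemma \ref{2coul} then applies: if the signs of $e_1$ and $e_2$ differed, $ghg^{-1}$ (and hence $h$) would be hyperbolic in $T$, contradicting ellipticity.

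The conclusion follows from an Euler-characteristic count on $\Theta$. Let $V_\pm\inc\Theta$ be the vertices all of whose boundary edges are positive, respectively negative. The sign-matching lemma yields $V_+\cap V_-=\es$ (no ``mixed'' vertex) and $d_\Theta(V_+,V_-)\geq L-1$. After disposing of the easy degenerate case in which one of $V_\pm$ is empty, pick $\bar v_\pm\in V_\pm$ realizing $d_\Theta(\bar v_+,\bar v_-)=d_{\min}\geq L-1$, and consider a $\Theta$-geodesic $\bar\gamma$ between them: it is a simple path of length $d_{\min}$, and by minimality of $d_{\min}$ none of its $d_{\min}-1\geq L-2$ interior vertices lies in $V_+\cup V_-$. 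Combined with ``no mixed vertex'', each such interior vertex has no boundary edges at all, hence has valence exactly $2k$ in $\Theta$. Writing $I$ and $B$ for the numbers of interior and boundary vertices of $\Theta$, one has $I\geq L-2$ and
\[
r-1 \;=\; E_\Theta - V_\Theta \;=\; (k-1)\,I \;+\; \tfrac{1}{2}\!\!\sum_{v\text{ boundary}}\!(d_v-2) \;\geq\; (k-1)(L-2),
\]
using $d_v\geq 2$ at every boundary vertex, whence $r>(k-1)(L-2)$.

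The most delicate point, and what I expect to be the main obstacle, is ensuring that the translate $gA$ carrying $\pi$ really is not contained in $\ell$ in the degenerate case where $h$ lies in some $\F_k$-conjugate of $\F_r$ (so that $A$ itself sits inside a translate of $\ell$). Once $gA\not\subset\ell$ is secured, the rest of the argument uses only that $gA$ is a genuine line, but establishing $gA\not\subset\ell$ may require choosing $g$ outside $\F_r$ and using either a malnormality property of $\F_r$ or a direct genericity argument about which translates of $A$ contain $\pi$.
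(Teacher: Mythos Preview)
Your overall strategy matches the paper's: take $S=\Cay(\F_k,X)$, let $\ell$ be the minimal $G_e$-invariant subtree, use Lemma~\ref{2coul} to force a lower bound on the distance between positive and negative boundary vertices, and finish with an Euler-characteristic count on $\Theta=\ell/G_e$. Working directly in $\Theta$ (rather than, as the paper does, working in $\ell$ and checking that the geodesic $[p_+,p_-]$ injects into $\Theta$ via $G_e$-invariance of signs) is a harmless and arguably cleaner variant.

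Your worry at the end is misplaced: the issue of $gA\subset\ell$ simply does not arise, because your path $\pi$ already contains $v_1'\notin\ell$. Any translate $gA\supset\pi$ therefore meets $S\setminus\ell$, and convexity of $\ell$ gives $gA\cap\ell=[v_1,v_2]$ exactly as you wrote. Nothing delicate there.

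The genuine gap is earlier. You assert that ``every boundary edge $e=vv'$ of $\ell$ inherits a sign in $\{+,-\}$'', but this is not true in general. A boundary edge determines a whole family of rays (all rays from $v$ whose first edge is $vv'$), and these rays can have \emph{different} signs: branching inside $\ell_C\setminus\ell$ may send some to the positive side of $T\setminus\{m\}$ and others to the negative side. This is exactly the situation the paper calls Case~1 (existence of a mixed vertex). Your sign-matching lemma, and hence the partition into $V_+$ and $V_-$ and the whole downstream count, silently assumes you are in the paper's Case~2. Note in particular that the case $r=0$ (where $\ell$ is a point) is \emph{always} Case~1, so your argument as written does not even recover Whitehead's theorem.

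The fix is short, and the paper does it in one line. If some boundary vertex $p$ is mixed, locate (as in the proof of Theorem~\ref{rehyp}) a vertex $q\in\ell_C$ projecting to $p$ at which the sign of a ray depends only on its exit edge, with both signs present among these exit edges. Since $L\ge2$, a length-$2$ segment through $q$ joining edges of opposite signs lies in some translate $gA$, and Lemma~\ref{2coul} makes $h$ hyperbolic. Once Case~1 is disposed of, every boundary vertex carries a well-defined sign, both signs occur, and your Euler-characteristic argument goes through.
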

 
 In other words: if $h$ is complicated, all splittings relative to $h$ are over groups of large rank.
 
When $L=2$, the theorem says that $h$ is not contained in a proper free factor, a result   due to Whitehead. When $L=3$ there is no splitting of $\F_k$ relative to $h$ over $\F_r$ if $r\le k-1$  (the case $r=1$ is due to Cashen-Manning \cite{CaMa_virtual}).

 \begin{proof}
 We argue as in the proof of Theorem \ref{rehyp}, with $S$ the Cayley graph  Cay$(\F_k,X)$
 (a locally finite tree),  
 $T$  the Bass-Serre tree of a splitting over $\F_r$, 
and  $e$ an edge of $T$ (note that 
 $G_e\simeq \F_r$ is not slender if $r\ge2$). We let $\ell\inc S$ be   any point in $S$ if $G_e$ is trivial, the minimal $G_e$-invariant subtree otherwise (it is a proper subtree because $G_e$ has infinite index: otherwise $G$ would fix a point in $T$ and the splitting would be trivial).   
 
 As in the proof of Theorem \ref{rehyp}, we distinguish case 1 and case 2. In case 1 (there exists a mixed vertex in $\ell$), no new  argument is needed   
since we assume $L\ge 2$. In case 2,  
all boundary vertices of $\ell$ are positive or negative,
but we can no longer find boundary vertices $p_+$, $p_-$ with distance at most 2 (this required $\ell$ to be a line). In fact, if $h$ as in the theorem is elliptic in $T$, any boundary vertices $p_+$, $p_-$ of opposite signs must be at least $(L-1)$-apart: otherwise the axis of a conjugate of $h$ intersects $\ell$ precisely in the segment $p_+p_-$, so $h$ is hyperbolic in $T$ by Lemma \ref{2coul}.

Choose a pair of boundary vertices of opposite signs $p_+, p_-\in\ell$ whose distance $D$ is minimal. We have seen $D\ge L-1$.
Every vertex of $\ell$ which is not a boundary vertex has valence $2k$ in $\ell$, so
all vertices between $p_+$ and $p_-$ have valence $2k$ in $\ell$. 
The quotient map from $\ell$ to $\ell/G_e$ (a regular covering with group $G_e$) is injective on the segment $p_+p_-$ because the sign assignment is $G_e$-invariant. The quotient graph $\ell/G_e$ therefore has at least $D-1$ vertices with valence $2k$. Since it has no vertex of valence 1 and its fundamental group has rank $r$, we get $r> (k-1)(D-1)\ge (k-1)(L-2)$. 
 \end{proof}
 
The theorem may be generalized, for instance to the following statement.  

\begin{thm} \label{virtlib}
Let $G$ be a finitely generated group acting on a tree $S$ with finite stabilizers (so $G$ is virtually free). Let $L$ be an integer, and $\calh$   a family of elements of $G$ such that every segment of length $\le L$ in $S$ is contained in a translate of the axis of  an element of $\calh$. 

If $G$ splits relative to $\calh$ over a subgroup which is virtually $\F_r$, then $r\ge L/4$ (and $r\ge L/2$ if $S$ has no vertices of valence 2). \qed
\end{thm}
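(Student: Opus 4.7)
The plan is to adapt the proof of Theorem \ref{scindfn} to the general locally finite tree $S$, using Theorem \ref{rehyp} as the underlying framework. First, since $S$ has finite stabilizers and finite subgroups of $G$ are elliptic in every $G$-tree, $S$ is elliptic with respect to the Bass-Serre tree $T$ of the putative splitting relative to $\calh$, so $S$ dominates $T$ and one may take $R=S$ in the machinery of Theorem \ref{rehyp}. Fix an edge $e\subset T$ and let $H=G_e$ be its stabilizer; choose a finite-index subgroup $H_0\inc H$ with $H_0\cong\F_r$. Because $H_0$ is torsion-free and $S$ has finite stabilizers, $H_0$ acts freely on $S$. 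Taking $\ell\inc S$ to be a minimal $H$-invariant subtree, the quotient $\bar\ell=\ell/H_0$ is a finite connected graph with fundamental group $\F_r$ (so $V-E=1-r$) and all vertices of valence at least $2$ by minimality of $\ell$.

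Next one runs the Case 1 / Case 2 analysis from the proof of Theorem \ref{rehyp}. When $L\ge 2$, Case 1 (a mixed boundary vertex of $\ell$) is ruled out immediately: it would produce a length-$2$ segment of $S$ contained in the axis of some $h\in\calh$, forcing $h$ to be hyperbolic in $T$ by Lemma \ref{2coul} and contradicting the ellipticity of $\calh$. So we are in Case 2: pick boundary vertices $p_+,p_-\in\ell$ of opposite signs at minimum distance $D$, and extend the segment $p_+p_-$ by one non-$\ell$ edge at each endpoint (automatically positive at $p_+$ and negative at $p_-$ by definition of the signs). The resulting segment of $S$ has length $D+2$ and meets $\ell$ exactly in $p_+p_-$. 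If $D+2\le L$, it is contained in an axis of some $h\in\calh$, whose two ends therefore have opposite signs; Lemma \ref{2coul} then gives a contradiction. Hence $D\ge L-1$.

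The final step is a counting argument on $\bar\ell$. The key claim is that the segment $p_+p_-$ projects injectively into $\bar\ell$: $p_+$ and $p_-$ lie in different $H$-orbits because signs are $H$-invariant; no interior vertex lies in the orbit of $p_+$ or $p_-$ because being a boundary vertex is $H$-invariant while interior vertices are non-boundary (using minimality of $D$); and if two interior vertices satisfied $v'=gv$ for some $g\in H_0\setminus\{1\}$, then $gp_+$ would be a positive boundary vertex with
\[
d(gp_+,p_-)\le d(gp_+,v')+d(v',p_-)=d(p_+,v)+d(v',p_-)=D-d(v,v')<D,
\]
contradicting minimality of $D$. The no-redundant-vertex convention forbids adjacent valence-$2$ vertices in $S$, so among the $D-1$ interior vertices of $p_+p_-$ at least $\lfloor (D-1)/2\rfloor$ have valence $\ge 3$ in $S$, and hence also in $\bar\ell$ since $H_0$ acts freely, preserving valences. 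Writing $A$ for the total number of valence-$\ge 3$ vertices of $\bar\ell$, the inequality $2E\ge 3A+2(V-A)$ combined with $V-E=1-r$ yields $r\ge 1+A/2\ge 1+\lfloor (D-1)/2\rfloor/2\ge L/4$. When $S$ has no valence-$2$ vertex, all $D-1$ interior vertices have valence $\ge 3$, so $A\ge D-1$ and $r\ge (D+1)/2\ge L/2$.

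The main obstacle is the injectivity claim for the projection $p_+p_-\to\bar\ell$, which is where the minimality of $D$ and the $H$-invariance of the sign assignment combine in an essential way; the remaining ingredients (the Case 1/Case 2 dichotomy, passing to the free subgroup $H_0$, and the Euler characteristic bookkeeping) are routine once the framework from Theorem \ref{rehyp} is in place.
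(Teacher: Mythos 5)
Your proposal is correct and follows exactly the route the paper intends: the paper gives no written proof of Theorem \ref{virtlib} (it is stated as a generalization of Theorem \ref{scindfn} with details left to the reader), and your argument is precisely that adaptation --- take $R=S$, run the Case 1/Case 2 analysis with $\ell$ the minimal $G_e$-invariant subtree, get $D\ge L-1$ via Lemma \ref{2coul}, and count in $\ell/H_0$ --- with the genuinely new points (passing to a free finite-index subgroup $H_0$ since $G_e$ may have torsion, injectivity of $p_+p_-$ in the quotient via sign-invariance and minimality of $D$, and the valence-2 bookkeeping coming from the no-redundant-vertex convention) all handled correctly. The only details worth adding are routine: observe that $\ell$ is a \emph{proper} subtree of $S$ (if $G_e$ acted cocompactly on $S$ with finite stabilizers it would have finite index in $G$, contradicting its ellipticity in the non-trivial tree $T$), that the valence of an interior vertex in $\ell$ equals its valence in $S$ because it is a non-boundary vertex, and that in your displayed inequality one should take $v$ to be the interior vertex closer to $p_+$, replacing $g$ by $g^{-1}$ if necessary.
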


We leave details to the reader.

We now show that the bound in Theorem 
\ref{scindfn} is optimal (at least for $L$ even).

\begin{prop} \label{sharp}
{For each $k\ge2$ and each even $L=2i\ge2$, there is  a splitting of $\F_k$ over a group of rank $r=(k-1)(L-2)+1$ relative to a cyclically reduced $h$ containing all reduced words of length $L$. }
 \end{prop}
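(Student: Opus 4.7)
The plan is to reverse-engineer the extremal configuration of Theorem \ref{scindfn}: design a Stallings graph $\Theta$ saturating the bound, realise it as the Stallings graph of a subgroup $C\leq\F_k$, and exhibit a graph-of-groups splitting of $\F_k$ with edge group $C$ and an elliptic element $h$ containing all reduced words of length $L$.

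First I build $\Theta$ as a labelled graph on $L$ vertices $p_+=v_0,v_1,\ldots,v_{L-1}=p_-$ arranged along a backbone of length $L-1$, with additional labelled edges inserted so that each internal vertex $v_j$ is full (every letter of $X^{\pm1}$ labels exactly one incident half-edge) and each boundary vertex $p_\pm$ has valence exactly $2$. The evenness of $L=2i$ lets the $2k-2$ surplus half-edges at each internal vertex be paired symmetrically, for instance via multi-edges along the backbone. An Euler-characteristic count gives $\mathrm{rk}\,\pi_1(\Theta)=(k-1)(L-2)+1=r$, so the subgroup $C\leq\F_k$ realising $\Theta$ as its Stallings graph has rank exactly $r$.

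Second I construct the splitting. I build a rank-$(k+r-1)$ labelled graph $\Sigma$ together with two labelled immersions $\iota_0,\iota_1\colon\Theta\to\Sigma$, and form the graph of labelled graphs with a single loop edge attached via these two maps. The identity $\chi(\Sigma)-\chi(\Theta)=1-k$ gives a total space of rank $k$, and by Bass-Serre theory its fundamental group is the HNN extension $K*_C$ with $K=\pi_1(\Sigma)$. Choosing $\Sigma$ and the immersions so that the total space folds, as a labelled graph, to the rose with $k$ petals identifies this HNN with $\F_k$ in its standard basis $X$, and yields a splitting of $\F_k$ over $C$ of rank~$r$. Finally, I find $h$: since $K$ is free of rank $\ge 2$ and, by construction, its Stallings graph accommodates every length-$L$ $X$-labelled path, a de~Bruijn-type enumeration produces an element $h\in K$ cyclically reduced in $X$ containing every reduced word of length $L$ as a subword. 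As $h\in K$ lies in a vertex group, it is elliptic.

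The main obstacle is Step 2: designing $\Sigma$ and the two immersions so that simultaneously (a) the HNN is $\F_k$ in its standard basis, (b) the edge group keeps its sharp Stallings structure $\Theta$, and (c) $K$ is rich enough to contain an $h$ with all length-$L$ subwords. A naive choice with a single standard generator $x_j$ as stable letter fails condition (c), since no $K$-element can then have a pure $x_j$-run of length $L$. The evenness of $L$ provides a backbone-reflection symmetry that makes all three conditions simultaneously achievable, which accounts for the restriction to $L$ even in the statement.
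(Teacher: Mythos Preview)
Your proposal is a plan rather than a proof, and the gap is exactly where you locate it yourself. Step~2 is the entire content of the proposition: one must actually produce the graph $\Sigma$, the two immersions $\iota_0,\iota_1$, verify that the resulting HNN extension is $\F_k$ in the basis $X$, and exhibit the element $h$. You describe the conditions (a), (b), (c) these objects must satisfy, note that a naive choice fails, and then assert that ``the evenness of $L$ provides a backbone-reflection symmetry that makes all three conditions simultaneously achievable'' without constructing anything. That sentence is a hope, not an argument. Step~1 is also underspecified: you never assign labels to the extra edges of $\Theta$, and ``multi-edges along the backbone'' does not by itself produce a folded $X$-labelled graph. (A concrete $\Theta$ does exist---e.g.\ an $x_1$-cycle of length $L$ with $x_2,\dots,x_k$-loops at the $L-2$ internal vertices---but you should write it down and check it is folded.)

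The paper's construction is different in shape and fully explicit. It uses amalgamated products, not HNN extensions: one builds a sequence of splittings $\F_k=A_i*_{C_i}A_{i+1}$ by induction on $i$, starting from $\langle b\rangle *_{\langle b^2\rangle}\langle a,b^2\rangle$ and obtaining each new splitting from the previous one by folding a subgroup of one vertex group across the edge. The Stallings graphs of $A_i$ and $C_i$ are drawn explicitly for $k=2$ (and for $k>2$ one adds loops labelled by the extra generators at every vertex). The key point replacing your missing Step~2 is that the Stallings graph of $A_{i+1}$ contains the full ball of radius $i$ at its basepoint; this immediately yields a cyclically reduced $h\in A_{i+1}$ containing every reduced word of length $2i$, and $h$ is elliptic because $A_{i+1}$ is a vertex group. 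If you want to pursue your HNN approach, you would need to supply a construction at this level of concreteness.
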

 
  \begin{rem}
 We are not sure of optimality for $L$ odd. For instance,   there seems to be no   splitting of $\F_2$ over $\F_2$ relative to an $h$ containing all reduced words of length 3.
 \end{rem}

 \begin{figure}[ht!]
  \centering
  \includegraphics{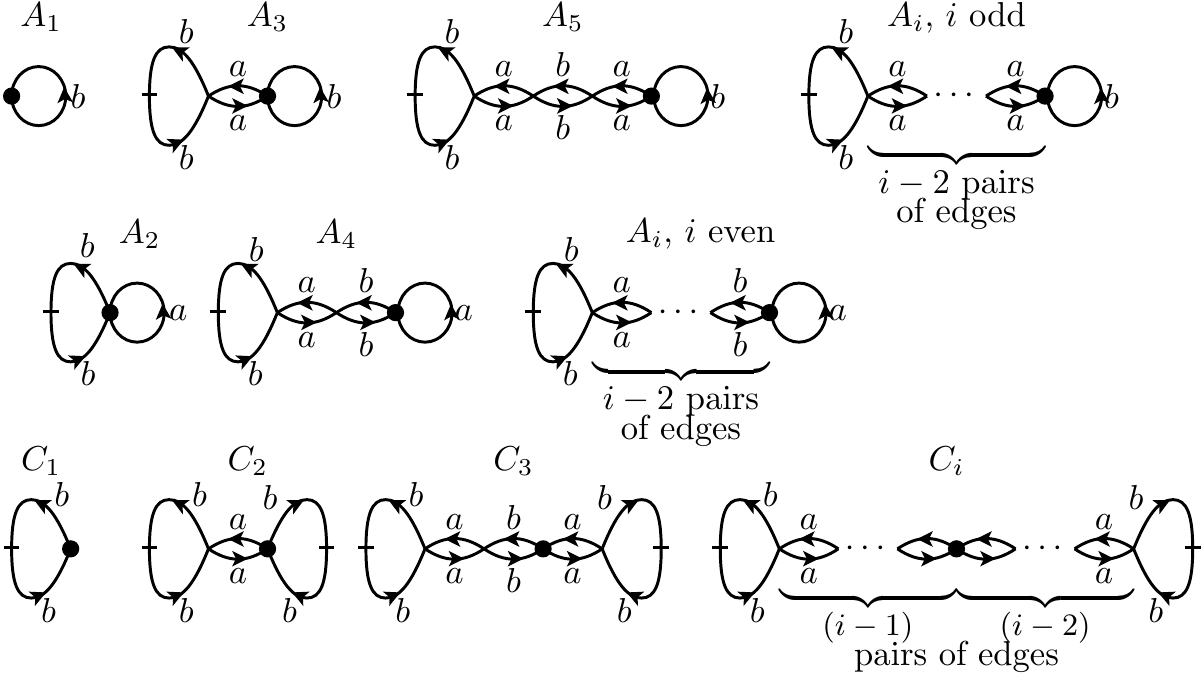} 
  \caption{The Stallings graphs of the subgroups $A_i,C_i$ of $\F_2$ (the bullet represents the base vertex).}
  \label{fig_stallings}
\end{figure}

 \begin{lem}[see Figure \ref{fig_stallings}]
 Fix  $k\ge 2$. For $i=1,2,\dots$,  there are subgroups $A_i,C_i$,
 and  splittings of $\F_k$ as $A_i*_{C_i}A_{i+1}$,
 such that:
 
 \begin{itemize}
 \item   
$A_i$ has rank $i(k-1)$;
 
 \item  $C_i$   
  has index 2 in $A_i$, hence  has rank $2i(k-1)-1$;

 \item all reduced words of length $i$ may be read as labels of paths in the Stallings graph of $A_{i+1}$   starting at the base  vertex.
 \end{itemize}
 \end{lem}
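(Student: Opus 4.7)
The strategy is to construct the Stallings graphs $\Theta_i$ of $A_i$ and $\Theta_i'$ of $C_i$ explicitly — generalizing to arbitrary $k\ge 2$ the pictures drawn in Figure \ref{fig_stallings} for $k=2$ — and verify each of the three bullets directly from them. I would define $\Theta_i$ by induction on $i$, arranging three things simultaneously: (i) $\Theta_i$ is a valid Stallings graph (at every vertex no two incident half-edges share a label in $X^{\pm1}$); (ii) the ball $B_{i-1}$ of radius $i-1$ around the base vertex in $\Cay(\F_k,X)$ immerses into $\Theta_i$ as a labeled subgraph, so every reduced word of length $\le i-1$ is readable at the base of $\Theta_i$ (which is what will yield the third bullet applied to $A_{i+1}$); (iii) $\chi(\Theta_i)=1-i(k-1)$, so rank$(A_i)=i(k-1)$ by Stallings' formula. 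Concretely $\Theta_i$ is obtained from $B_{i-1}$ by attaching $i(k-1)$ extra loops/pairings at the boundary of $B_{i-1}$ in a pattern chosen to preserve label validity; Figure \ref{fig_stallings} records such a pattern in rank $2$, and one extrapolates by adding $k-1$ further pairings per level for general $k$.

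Next, define $C_i$ as the kernel of a homomorphism $\pi_i:A_i\to\Z/2$ that is nontrivial on each of the $i(k-1)$ loops introduced in the construction; its Stallings graph $\Theta_i'$ is the double cover of $\Theta_i$ determined by $\pi_i$. Then $[A_i:C_i]=2$ automatically, and $\chi(\Theta_i')=2\chi(\Theta_i)$ gives rank$(C_i)=2i(k-1)-1$. The inclusion $C_i\hookrightarrow A_{i+1}$ is then realized by exhibiting a label-preserving immersion $\Theta_i'\to\Theta_{i+1}$ (factoring through the embedding $\Theta_i\hookrightarrow\Theta_{i+1}$ built into the induction), which by Stallings' folding criterion is exactly the datum of such a subgroup inclusion.

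For the splitting $\F_k=A_i*_{C_i}A_{i+1}$, the inclusions of $A_i$ and $A_{i+1}$ into $\F_k$ agree on $C_i$ and induce a homomorphism $\phi:A_i*_{C_i}A_{i+1}\to\F_k$. Surjectivity of $\phi$ follows from the subword property applied at level $i+1\ge 2$: the base of $\Theta_{i+1}$ sees every letter of $X$, so $X\subset A_{i+1}\subset\mathrm{Im}\,\phi$. For injectivity I would form the pushout labeled graph $\Gamma^\ast := \Theta_i\cup_{\Theta_i'}\Theta_{i+1}$, glued along the double cover on the $A_i$ side and the immersion on the $A_{i+1}$ side; by van Kampen, $\pi_1(\Gamma^\ast)=A_i*_{C_i}A_{i+1}$, while the canonical label-preserving map $\Gamma^\ast\to\Gamma_k$ to the rose with $k$ petals is an immersion (provided the gluing pattern is set up so no two half-edges at a glued vertex acquire the same label), giving $\pi_1(\Gamma^\ast)\hookrightarrow\F_k$. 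Combined with surjectivity this forces $\phi$ to be an isomorphism. Consistency of the ranks is confirmed by the identity
\[
\chi(A_i)+\chi(A_{i+1})-\chi(C_i) = (1-i(k-1))+(1-(i+1)(k-1))-(2-2i(k-1)) = 1-k = \chi(\F_k).
\]

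The subword property for $\Theta_{i+1}$ is then immediate from (ii). The main obstacle is the combinatorial design of the identification pattern: it must simultaneously be label-valid, carry a consistent $\Z/2$-coloring defining $\pi_i$ (so the double cover $\Theta_i'$ exists as stated), and be hereditary from level $i$ to level $i+1$ (so the embedding $\Theta_i\hookrightarrow\Theta_{i+1}$, and hence the immersion $\Theta_i'\to\Theta_{i+1}$ underlying $C_i\hookrightarrow A_{i+1}$, exists). Figure \ref{fig_stallings} shows the required pattern for $k=2$; the iterative scheme above extends it to arbitrary $k\ge 2$.
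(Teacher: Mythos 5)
Your plan founders at exactly the two points that carry the content of the lemma. First, surjectivity: from the fact that every letter of $X^{\pm1}$ labels a path starting at the base vertex of $\Theta_{i+1}$ you conclude $X\subset A_{i+1}$. This confuses paths with loops: membership in $A_{i+1}$ means being readable as a loop at the base vertex, and the third bullet only provides paths. In fact $A_{i+1}$ must be a proper subgroup (already for $k=2$ one has $A_2=\grp{a,b^2}$, whose base vertex sees all four letters but which does not contain $b$), so surjectivity has to come from $\grp{A_i,A_{i+1}}=\F_k$, which your argument never addresses. Worse, your inductive design requires a based labelled embedding $\Theta_i\into\Theta_{i+1}$; that would give $A_i\subset A_{i+1}$, hence $\grp{A_i,A_{i+1}}=A_{i+1}\subsetneq\F_k$, so such a hereditary embedding is incompatible with the splitting you are trying to build (in the actual construction it is $A_{i-1}$, not $A_i$, that lies in $A_{i+1}=\grp{A_{i-1},C_i}$). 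Second, injectivity: van Kampen applied to the strict pushout $\Theta_i\cup_{\Theta_i'}\Theta_{i+1}$ does not yield $A_i*_{C_i}A_{i+1}$, because on the $A_i$ side you glue along the $2$-to-$1$ covering map rather than along an embedded common subgraph; the pushout identifies the $q$-images in $\Theta_{i+1}$ of the two sheets over each point of $\Theta_i$, which changes $\pi_1$ unpredictably and can destroy the immersion to the rose. The correct model for the amalgam is the double mapping cylinder (graph of spaces), which is not a labelled graph immersing in the rose, so it gives no embedding into $\F_k$; and the Euler characteristic identity you verify is necessary but proves nothing. Finally, the combinatorial pattern that must simultaneously achieve label-validity, the ball property, the $\Z/2$-cover and heredity for all $i$ and $k$ is exactly what you defer at the end, so even granting the framework the construction is not done.

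The paper's proof avoids verifying any isomorphism with $\F_k$ directly: it starts from the explicit splitting $\F_2=\grp{b}*_{\grp{b^2}}\grp{a,b^2}$ and obtains $A_i*_{C_i}A_{i+1}$ from $A_i*_{C_{i-1}}A_{i-1}$ by folding the index-two subgroup $C_i<A_i$ across the edge, which replaces the edge group $C_{i-1}$ by $C_i$ and the opposite vertex group $A_{i-1}$ by $\grp{A_{i-1},C_i}=A_{i+1}$; since a fold does not change the fundamental group of the graph of groups, each decomposition is automatically a splitting of $\F_k$. The ranks, the index-two claim and the subword property are then read off the explicit Stallings graphs of Figure \ref{fig_stallings}, with $k-2$ extra loops added at each vertex when $k>2$. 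To salvage your route you would need either a genuine Bass--Serre or normal-form argument for injectivity of $A_i*_{C_i}A_{i+1}\to\F_k$ together with a correct proof that $A_i$ and $A_{i+1}$ generate $\F_k$, or to adopt the folding mechanism; the pushout-plus-van-Kampen shortcut cannot work as stated.
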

 
 The last item ensures that   
 $A_{i+1}$  
 contains   cyclically reduced elements containing all reduced words of length $L=2i$, so the proposition follows from the lemma.
 
 The Stallings graphs of the groups $A_i$ and $C_i$ are pictured on Figure \ref{fig_stallings} in the case of $\F_2=\grp{a,b}$. 
  One easily checks that $C_i\subset C_{i+1}$, and that $A_{i+1}=\grp{A_{i-1},C_i}$.
The initial splitting is $A_1*_{C_1} A_2=\grp{b}*_{\grp{b^2}} \grp{a,b^2}$.
  For all $i$
one obtains the splitting $A_{i}*_{C_{i}} A_{i+1}$ from
$A_{i}*_{C_{i-1}} A_{i-1}$ by folding $C_i<A_i$ along the edge, thus replacing $C_{i-1}$ by $C_i$
and $A_{i-1}$ by $\grp{A_{i-1},C_i}=A_{i+1}$.
 The reader may check that these splittings have the required properties.
For $k>2$, one adds $k-2$ loops labelled by the extra generators at each vertex of each Stallings graph.

\section{Random walks  (after Maher-Sisto \cite{MaSi_random})}

\begin{dfn} [Random subgroup, random element]\label{rw}
Let $G$ be a finitely generated group. Let $\mu$ be a probability measure on $G$ whose support is finite and generates $G$ as a semigroup. We fix $p\ge1$, and we consider a subgroup $H\inc G$ generated by $p$ elements $w_{1,n},\dots,w_{p,n}$ arising from   independent random walks of length $n$ generated by $\mu$. We call $H$   a \emph{random  subgroup} of $G$. When $p=1$, we call $w_n=w_{1,n}$   a \emph{random element}.
\end{dfn}

\begin{rem} The assumptions on $\mu$ and $H$ may be weakened to those of  \cite{MaSi_random}. 
\end{rem}

 Recall that $G$ acts   \emph{acylindrically} on a tree $S$ if there exist numbers $K$ and $C$ such that stabilizers of segments of length $K$ have cardinality at most $C$ (this is sometimes called almost acylindrical, and agrees with the general definition of acylindricity given in \cite{MaSi_random}).

The following theorem will ensure that Theorem \ref{rehyp} applies to non-trivial elements of random subgroups if $S$ is   
acylindrical.

\begin{thm}\label{MaSi}
Assume that $G$ is not virtually cyclic and acts
  acylindrically on a non-trivial tree $S$. Let $\cali$ be a finite family of segments $I_i\inc S$. Let $H=\grp{w_{1,n},\dots,w_{p,n}}$ be a random subgroup as in Definition \ref{rw}.

With probability going to 1 as $n\to\infty$, the group  $H$ is freely generated by $w_{1,n},\dots,w_{p,n}$, the action of $H$ on $S$ is free, and the axis of any non-trivial $h\in H$ contains a translate of each $I_i$.

\end{thm}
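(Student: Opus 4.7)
The plan is to derive the three conclusions from the main probabilistic results of \cite{MaSi_random} combined with a classical ping-pong (Schottky) argument. Since $G$ is not virtually cyclic and acts acylindrically on the non-trivial tree $S$, the action is non-elementary acylindrical, so the Maher--Sisto machinery applies to random walks on $G$ viewed through this action.

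First, I would isolate three inputs from \cite{MaSi_random}, each holding with probability tending to $1$ as $n\to\infty$:
\begin{itemize}
\item[(a)] each $w_{j,n}$ is hyperbolic in $S$, with translation length at least $cn$ for some fixed $c>0$;
\item[(b)] the axes $A_j:=\Axis(w_{j,n})$ satisfy a Schottky condition: there is a constant $K$, depending only on the acylindricity constants, such that the overlap of any two distinct $G$-translates of the $A_j$'s has length at most $K$;
\item[(c)] each $A_j$ contains, within one fundamental domain for $\grp{w_{j,n}}$ and at distance $>K$ from either end, a translate of every $I_i\in\cali$.
\end{itemize}

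From (a) and (b), a standard ping-pong argument on $S$ yields that $H$ is freely generated by $w_{1,n},\dots,w_{p,n}$ and that every non-trivial element of $H$ is hyperbolic in $S$. Since an element without a fixed vertex on a tree (acting without inversion) cannot fix an edge, $H$ acts freely on $S$. For the axis condition, consider an arbitrary non-trivial $h\in H$ and write it as a cyclically reduced word $h=w_{i_1}^{\varepsilon_1}\cdots w_{i_m}^{\varepsilon_m}$ in the free generators. In the Schottky regime produced by (a) and (b), a fundamental domain of $\grp{h}$ on its axis is a concatenation of $m$ arcs, the $k$-th obtained from a translate of a fundamental domain of $\grp{w_{i_k}^{\varepsilon_k}}$ on $A_{i_k}$ by removing subsegments of length at most $K$ at each end (where it overlaps its neighbors). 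By (c), the prescribed translate of $I_i$ on $A_{i_k}$ lies deep enough in the interior that it survives as a subsegment of the axis of $h$.

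The main obstacle is the extraction of (c) from \cite{MaSi_random}, which is slightly more than a verbatim input. The mechanism is that the random trajectory $1,\,s_1,\,s_1s_2,\,\dots,\,w_{j,n}$ fellow-travels $A_j$ at uniformly bounded distance once $n$ is large, while positivity of the step distribution $\mu$ and the fact that its support generates $G$ as a semigroup together imply that any prescribed bounded pattern in $S$ (up to $G$-translation) is hit at positive density along a long trajectory. Combined with the linear translation length from (a), this produces a linear number of disjoint translates of each $I_i$ within a single fundamental domain of $A_j$, leaving ample room beyond the $K$-collars at the ends and hence justifying (c).
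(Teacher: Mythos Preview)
Your overall strategy matches the paper's --- derive a Schottky structure for $H$ on $S$ from \cite{MaSi_random} and use it to control axes of arbitrary $h\in H$ --- but two of your inputs are not in a form that can be read off from \cite{MaSi_random}, and this is where the actual content lies.

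Input (b) is stated too strongly. You claim that distinct $G$-translates of the $A_j$'s have overlap bounded by a constant $K$ depending only on the acylindricity constants; acylindricity alone does not give this (it bounds stabilisers of long segments, not overlaps of translates of a given axis), and \cite{MaSi_random} does not package its output this way. What your ping-pong and your step 3 actually use is control on $H$-translates, and for this the paper invokes the rose description of Propositions 30 and 32 of \cite{MaSi_random}: the quotient $S_H/H$ of the minimal $H$-invariant subtree is a central tree $C$ of diameter $<\varepsilon n$ with $p$ outer arcs $\theta_i$ of length $>(L-\varepsilon)n$, and $A_i$ projects to $\theta_i$ together with a short arc in $C$. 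This replaces your $K$ by the diameter of $C$ (sublinear in $n$, not a fixed constant) and directly yields free generation, freeness of the action, and the fact that the axis of any non-trivial $h\in H$ projects onto some $\theta_i$.

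Your justification of (c) is only heuristic. That a prescribed bounded pattern is ``hit at positive density'' along the random walk is plausible, but one then has to pass from the sample path in $G$ to the geodesic $[x_0,w_{j,n}x_0]$ in $S$ and then to the axis $A_j$, and these transfers are exactly where the work lies. The paper's device avoids this: using Lemma 4.3 of \cite{Pau_Gromov}, fix a single hyperbolic $g\in G$ whose axis fundamental domain already contains a translate of every $I_i$; then Proposition 10(4) of \cite{MaSi_random} gives that the middle half of $[x_0,w_{j,n}x_0]$ contains, with high probability, a translate of an arbitrarily long segment of $\Axis(g)$, and Proposition 10(5) transfers this to $A_j$. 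This places translates of all $I_i$ well inside $\theta_j$, away from $C$, which is precisely the refinement of (c) your concatenation argument in step 3 requires.
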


 We explain how to derive this theorem from \cite{MaSi_random}. Since the action on $S$ is acylindrical and $G$ is not virtually cyclic, $S$ is irreducible (there is no fixed point, no fixed end, no invariant line), so the action is non-elementary in the sense of \cite{MaSi_random}. By the main theorem of \cite{MaSi_random}, the $w_{i,n}$'s freely generate $H$ with probability going to 1, and $HE(G)$ is hyperbolically embedded in $G$ (with $E(G)$ the maximal finite normal subgroup of $G$).

Choose a basepoint $x_0\in S$, and fix a hyperbolic element $g\in G$ such that some fundamental domain for the action of $g$ on its axis  contains a translate of each $I_i$ 
(one finds such a $g$ by applying    Lemma 4.3 of \cite{Pau_Gromov} inductively). 

We first consider the case $p=1$ and we let $\gamma_n$ be the segment between $x_0$ and $w_nx_0$.
Applying Proposition  10 (4) of \cite{MaSi_random} with $\varepsilon=1/4$ and $L$ large with respect to the constant $K_0$ and the translation length of $g$, we deduce that the middle half of $\gamma_n$ contains a translate of each $I_i$ with probability going to 1. By Proposition 10 (5) of \cite{MaSi_random}, this also holds for the axis of $w_n$.

For $p>1$, we consider the smallest $H$-invariant subtree $S_H\subset S$ containing $x_0$. It follows from Propositions 30 and 32 of \cite{MaSi_random} that, with probability going to 1, the action of  $H$  on $S_H$ is free, and the quotient looks like a rose:   it is the union  of a central tree $C$ with diameter $<\varepsilon n$ (for some arbitrarily  small $\varepsilon>0$) and $p$ arcs $\theta_1,\dots,\theta_p$ of length   $>(L-\eps) n$  attached to $C$ (with $L>0$ the drift of the random walk), and  moreover   the image of the axis of $w_{i,n}$ in $S_H/H$ is the union of $\theta_i$ with an arc contained in $C$ (compare  the  {central tree property}, see e.g.\  \cite[section 3.1]{BMNVW}  and Subsection \ref{sctp}). The   image of the axis of any non-trivial $h\in H$  in $S_H/H$  contains one of the $\theta_i$'s, and the result follows.

\section
{Non-splitting relative to random elements}
\label{nsr}

One basic theme of this paper is that a group  has no non-trivial splitting relative to a random element (or a random subgroup). 
As explained in the introduction, 
 one must   impose restrictions on the edge groups of the splitting. In the case of $\F_k$, combining Theorems \ref {scindfn} and \ref{MaSi} yields:
 
 \begin{thm}
Fix $r\ge1$.   Let $w_n\in\F_k$ be   a random element     as in Definition \ref{rw}.   With probability going to 1 as $n\to\infty$, there is no splitting of $\F_k$ relative to $w_n$ over a group of rank at most $r$.
\end{thm}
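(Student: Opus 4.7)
The plan is to combine Theorem \ref{scindfn} with Theorem \ref{MaSi} applied to the action of $\F_k$ on its Cayley tree. Fix an integer $L\ge 2$ large enough that $(k-1)(L-2)\ge r$; since $k\ge 2$, one may take $L=r+2$. Let $S=\Cay(\F_k,X)$, which is a $2k$-regular tree on which $\F_k$ acts freely, hence acylindrically, and note that $\F_k$ is not virtually cyclic.

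Next, I would choose $\cali$ to be a finite set of representatives of the $\F_k$-orbits of segments of length $L$ in $S$. Since $\F_k$ acts transitively on vertices of $S$, these orbits correspond (up to reversal of the segment) to the reduced words of length $L$ in $X^{\pm1}$, and in particular $\cali$ is finite. Applying Theorem \ref{MaSi} with $p=1$ and $H=\grp{w_n}$, I obtain: with probability going to $1$ as $n\to\infty$, the element $w_n$ is non-trivial (indeed acts freely on $S$) and its axis $A_{w_n}\inc S$ contains a translate of each $I_i\in\cali$.

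Now suppose, on this event, that $\F_k$ admits a splitting relative to $w_n$ over a subgroup of rank $r'\le r$. By Nielsen--Schreier this subgroup is free, hence isomorphic to $\F_{r'}$, so Theorem \ref{scindfn} is applicable. Let $h$ be the cyclically reduced conjugate of $w_n$; since ellipticity is preserved by conjugation, $h$ is elliptic in the same splitting. The axis of $h$ passes through $1\in S$ and is labelled along its edges by the bi-infinite word $h^\infty$, so its length-$L$ subsegments are exactly the length-$L$ cyclic subwords of $h$. Since $A_{w_n}$ is an $\F_k$-translate of the axis of $h$, the condition that $A_{w_n}$ contains a translate of every $I_i\in\cali$ is equivalent to $h$ containing every reduced word of length $L$ as a cyclic subword. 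Theorem \ref{scindfn} then gives $r'>(k-1)(L-2)\ge r$, contradicting $r'\le r$.

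The argument is essentially a direct concatenation of the two cited results; the only point requiring care is the standard dictionary between translates of segments of the Cayley tree contained in $A_{w_n}$ and cyclic subwords of the cyclic reduction of $w_n$. No serious obstacle is expected.
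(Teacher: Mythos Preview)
Your proof is correct and follows essentially the same approach as the paper: choose $L$ with $(k-1)(L-2)\ge r$, apply Theorem~\ref{MaSi} to the Cayley tree to guarantee that the axis of $w_n$ contains all segments of length $L$, then invoke Theorem~\ref{scindfn} for a contradiction. You supply more detail than the paper (the Nielsen--Schreier observation, the passage to the cyclic reduction, and the dictionary between axis segments and cyclic subwords), but the argument is the same.
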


 \begin{proof} Choose $L$ such that $r\le(k-1)(L-2)$. Apply Theorem \ref{MaSi} to the action of $\F_k$ on its Cayley tree. With probability going to 1, the axis of $w_n$ contains (translates of) all segments of length $  L$, so $w_n$ is hyperbolic in every splitting over a group of rank $\le r$ by Theorem \ref {scindfn}. 
 \end{proof}

 In general, we get:

\begin{thm} \label{main}
Assume that $G$ is not virtually cyclic and acts
  acylindrically on a non-trivial tree $S$ which is locally finite or  slender.    Let $H$ be a random subgroup as in Definition \ref{rw}.
  With probability going to 1 as $n\to\infty$, the group $H$ acts freely in every non-trivial slender tree  $T$  
 such that  $S$ is elliptic with respect to $T$.
\end{thm}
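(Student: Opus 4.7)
The plan is to combine Theorems \ref{rehyp} and \ref{MaSi} in the most direct way: the first provides a uniform obstruction that is detected by a fixed finite list of segments in $S$, and the second guarantees that a random subgroup $H$ realizes that obstruction almost surely. The key observation is that the finite family $\cali$ produced by Theorem \ref{rehyp} depends only on $S$, not on the auxiliary tree $T$, so a single high-probability event on $H$ will force freeness on $T$ for \emph{every} admissible $T$ simultaneously.

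Concretely, I would first apply Theorem \ref{rehyp} to the given tree $S$ (which is locally finite or slender by hypothesis) to obtain a finite collection of segments $I_1,\dots,I_N \subset S$ of length at most $4$ such that any $h\in G$ that is hyperbolic in $S$ with axis containing a translate of each $I_i$ must remain hyperbolic in every non-trivial slender tree $T$ with $S$ elliptic with respect to $T$. Next, since $G$ is not virtually cyclic and acts acylindrically on the non-trivial tree $S$, Theorem \ref{MaSi} applies to the family $\cali = \{I_1,\dots,I_N\}$: the probability that $w_{1,n},\dots,w_{p,n}$ freely generate $H$, that $H$ acts freely on $S$, and that the axis in $S$ of every non-trivial $h\in H$ contains a translate of each $I_i$, tends to $1$ as $n\to\infty$.

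On the intersection of these good events, pick any non-trivial slender tree $T$ with $S$ elliptic with respect to $T$, and any non-trivial $h\in H$. Since $H$ acts freely on $S$, such an $h$ is hyperbolic in $S$, and by construction its axis contains a translate of each $I_i$. Theorem \ref{rehyp} then yields that $h$ is hyperbolic in $T$; in particular, $h$ fixes no point of $T$. As this holds for every non-trivial $h\in H$ and every admissible $T$, the action of $H$ on any such $T$ is free, which is exactly the conclusion.

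There is no serious obstacle: the content of the theorem is entirely packaged in the two earlier results, and the role of this proof is essentially to check that the quantifier order works out. The one subtle point to state carefully is the uniformity in $T$, which is automatic once one notes that the finite set $\cali$ is extracted from $S$ alone before any $T$ is considered; this is why Theorem \ref{rehyp} is formulated with $\cali$ depending on $S$ but not on $T$, and why it is safe to apply Theorem \ref{MaSi} to this fixed $\cali$ a single time.
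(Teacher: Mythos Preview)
Your proof is correct and follows exactly the paper's approach: apply Theorem \ref{rehyp} to obtain the finite family $\cali$ depending only on $S$, then invoke Theorem \ref{MaSi} so that every non-trivial element of $H$ satisfies the hypothesis of Theorem \ref{rehyp}. The paper's proof is the one-line version of what you wrote; your emphasis on the uniformity in $T$ (that $\cali$ is fixed before $T$ is chosen) is the only point worth making explicit, and you handled it correctly.
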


 \begin{proof}
 Use Theorem \ref{MaSi} and apply Theorem  \ref{rehyp} to all non-trivial elements of $H$.
  \end{proof}
  
  We refer the reader to \cite{GL_JSJ}   
  for details about the JSJ decompositions used in the next   
  results. 
  
   \begin{cor}\label{cor_fp}
Assume that $G$ is finitely presented,  not  
 virtually $\bbZ^2$,  
 and the JSJ decomposition of $G$ over virtually cyclic  subgroups is acylindrical. Let $w_n$ be a random element of $G$. 
With probability going to 1 as $n\to\infty$, there is no  splitting of $G$  over a virtually cyclic subgroup  relative to $w_n$.
\end{cor}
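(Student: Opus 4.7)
The plan is to apply Theorem \ref{main} with the tree $S$ chosen to be the JSJ decomposition of $G$ over virtually cyclic subgroups, and with the random subgroup $H=\grp{w_n}$ corresponding to the $p=1$ case of Definition \ref{rw}. This JSJ exists because $G$ is finitely presented and not virtually $\bbZ^2$ (see \cite{GL_JSJ}), it is acylindrical by hypothesis, and its edge stabilizers are virtually cyclic -- hence slender -- so $S$ is a slender $G$-tree.

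If the JSJ $S$ is trivial (a point), then $G$ admits no non-trivial splitting over a virtually cyclic subgroup at all, so the statement is vacuous. Otherwise $S$ is a non-trivial slender acylindrical $G$-tree, and in particular $G$ cannot be virtually cyclic, so the hypotheses of Theorem \ref{MaSi} (hence of Theorem \ref{main}) are all in place. Now let $T$ be any non-trivial splitting of $G$ over a virtually cyclic subgroup. The defining \emph{universal ellipticity} of the JSJ (see \cite{GL_JSJ}) asserts exactly that every edge stabilizer of $S$ is elliptic in every splitting $T$ in the class; that is, $S$ is elliptic with respect to $T$ in the sense of Section 2. Theorem \ref{main} then yields that with probability going to $1$ as $n\to\infty$, $H=\grp{w_n}$ acts freely on every non-trivial slender $T$ for which $S$ is elliptic with respect to $T$. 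Equivalently, $w_n$ is hyperbolic in every such $T$, so no splitting of $G$ over a virtually cyclic subgroup is relative to $w_n$.

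The main obstacle is really a matter of correctly invoking JSJ theory rather than a new geometric argument: one must identify the abstract condition ``$S$ elliptic with respect to $T$'' from Theorem \ref{main} with the universal ellipticity property of the JSJ given by \cite{GL_JSJ}, and handle the degenerate cases (trivial $S$, $G$ virtually cyclic) where Theorem \ref{main} does not directly apply. Once this is done, the corollary follows from Theorem \ref{main} with no further work.
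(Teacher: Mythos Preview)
Your reduction to Theorem \ref{main} when the JSJ tree $S$ is non-trivial is exactly what the paper does, and that part is fine. The gap is in your treatment of the trivial case. You assert that if the JSJ tree $S$ is a point then $G$ has no non-trivial splitting over a virtually cyclic subgroup, but this is false: a trivial JSJ tree only means that $G$ is a single JSJ vertex, which may be \emph{rigid} (no splittings, and then the statement is vacuous as you say) or \emph{flexible}. In the flexible case $G$ is QH with finite fiber, e.g.\ a closed hyperbolic surface group; such a $G$ has \emph{many} cyclic splittings (those dual to simple closed curves), yet its JSJ tree is still a point because no single splitting is universally elliptic. Your argument says nothing about $w_n$ in these splittings.

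The paper closes this gap with a genuinely different argument for the flexible case: it uses Theorem 6.2 and Proposition 6.38 of \cite{GL_JSJ} to identify $G$ (up to finite kernel) with the fundamental group of a closed hyperbolic $2$-orbifold $\Sigma$, then applies the results of \cite{MaSi_random} to the action of $G$ on the hyperbolic plane $\bbH^2$ rather than on a tree. One fixes a filling geodesic $\gamma$ on $\Sigma$ and shows that, with high probability, the geodesic representing $w_n$ passes close to long segments of $\gamma$ and therefore crosses every simple closed geodesic $\delta$. Since every virtually cyclic splitting of $G$ is dual to such a $\delta$, this forces $w_n$ to be hyperbolic in all of them. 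The hypothesis that $G$ is not virtually $\bbZ^2$ is used precisely here, to ensure the orbifold is hyperbolic rather than Euclidean. You will need to add this flexible-case analysis (or an equivalent one) to complete the proof.
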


 \begin{proof}
Let $S$ be an acylindrical JSJ tree over virtually cyclic subgroups.   By definition of the JSJ decomposition, 
$S$ is elliptic in every tree $T$ with virtually cyclic   edge stabilizers,  so the result follows by    applying Theorem \ref{main} to $S$, provided that $S$ is not a trivial tree (a point).

 If $S$ is a point, there are two cases: rigid or flexible (see  \cite{GL_JSJ}, Definition 2.14). In the rigid case, $G$ does not split over a virtually cyclic group so the theorem is empty. In the flexible case, it follows from Theorem 6.2 and Proposition 6.38 of \cite{GL_JSJ} that  either $G$ is  
 virtually $\bbZ^2$,  contrary to our hypothesis,
or
   $G$ maps onto the fundamental group of a closed hyperbolic 2-orbifold $\Sigma$ with finite kernel (it is QH with finite fiber, see \cite[Theorem 6.2]{GL_JSJ}).

For simplicity we assume that $\Sigma$ is a surface rather than an orbifold. We apply \cite{MaSi_random} to the action of $G$ on the hyperbolic plane $H^2$ 
(viewed as the universal cover of $\Sigma$). 
We fix a closed geodesic $\gamma$ which fills $\Sigma$. 
By \cite{MaSi_random}, there exists a constant $K$ such that, for any compact segment $A\subset\gamma$, 
 the axis of $w_n$ is $K$-close to some translate of $A$ with high probability. 
If $A$ is chosen long enough, this implies that the closed geodesic representing (the conjugacy 
class of the image of) $w_n$ meets every simple closed geodesic $\delta$, so $w_n$ is hyperbolic in the splitting of $G$ dual to $\delta$. 
The result follows since every splitting of $G$ over a slender group is dual to a  simple closed   geodesic of $\Sigma$ (see e.g.\ Sections 5.1.2 and 5.2   
of \cite{GL_JSJ}).
  \end{proof}

  \begin{rem} 
    The same argument shows that, if $G$ is finitely presented, not slender, and its slender JSJ decomposition  is acylindrical,
then, with probability going to 1 as $n\to\infty$, there is no  splitting of $G$  over a slender 
subgroup  relative to $w_n$.
  \end{rem}

 \begin{cor} \label{premcor}
 Let $G$ be   a non-slender group which is hyperbolic relative to a finite family of slender subgroups $P_i$.
 \begin{itemize}
\item Let $w_n$ be a random element of $G$. 
With probability going to 1 as $n\to\infty$, there is no  splitting of $G$  over a slender subgroup  relative to $w_n$.
\item 
If $H$ is a random subgroup, then with probability going to 1 as $n\to\infty$ the group $H$ acts freely in every non-trivial tree with slender edge stabilizers.
\end{itemize}
\end{cor}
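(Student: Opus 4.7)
The plan is to apply Theorem \ref{main} to a canonical slender acylindrical $G$-tree $S$, exactly as in the proof of Corollary \ref{cor_fp}. I would take $S$ to be the JSJ tree of $G$ over slender subgroups, relative to the peripheral family $\calp$. By \cite{GL_JSJ}, this slender JSJ tree is acylindrical, and by construction its edge stabilizers are slender, so $S$ itself is a slender tree in the sense of Section 2. The universal property of the JSJ ensures that $S$ is elliptic in every slender $G$-tree $T$ that is relative to $\calp$. For a slender $T$ which is not relative to $\calp$, each $P_i$ either fixes a point in $T$ or, being slender, leaves a line invariant; in the latter case one can collapse the associated orbit of edges in $T$ to produce a slender $G$-tree in which $P_i$ is elliptic, without changing the elliptic/hyperbolic status of any element of $G\setminus\bigcup_g gP_ig^{-1}$ nor the freeness of any subgroup intersecting trivially the conjugates of the $P_i$. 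This reduces the general case to the case of trees relative to $\calp$.

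Assume first that $S$ is non-trivial. For the first bullet, apply Theorem \ref{MaSi} to the acylindrical action of $G$ on $S$: with probability going to $1$, the element $w_n$ is loxodromic on the relative Cayley graph of $G$, hence not conjugate into any $P_i$, hence hyperbolic in $S$; moreover its axis in $S$ contains a translate of every segment in the finite family $\cali$ provided by Theorem \ref{rehyp}. That theorem then forces $w_n$ to be hyperbolic in every non-trivial slender $G$-tree in which $S$ is elliptic, which by the reduction covers every slender splitting of $G$. For the second bullet, Theorem \ref{main} applies directly to $S$: with probability going to $1$ the random subgroup $H$ acts freely on every non-trivial slender $T$ in which $S$ is elliptic, and the reduction again promotes this to freeness on every non-trivial slender $G$-tree.

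The remaining case is when $S$ is trivial. Either $G$ admits no non-trivial slender splitting relative to $\calp$, in which case combining with the reduction handles both statements; or $G$ is flexible of QH type, mapping onto the fundamental group of a closed hyperbolic $2$-orbifold $\Sigma$ with slender fibre. In the QH case one argues as in Corollary \ref{cor_fp}: use \cite{MaSi_random} applied to the action of $G$ on the universal cover of $\Sigma$ to show that, with high probability, the closed geodesic representing the image of $w_n$ (respectively the images of the generators of $H$) fills $\Sigma$, hence crosses every essential simple closed geodesic; thus $w_n$ is hyperbolic (resp.\ $H$ acts freely) in every splitting dual to such a curve, and every slender splitting of $G$ is of this form. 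The main anticipated obstacle is the reduction step from arbitrary slender trees to slender trees relative to $\calp$: one must check carefully that collapsing $P_i$-invariant lines does not destroy the slenderness of the tree nor move generic elements of $G$ into vertex stabilizers. The QH flexible case is otherwise a routine variant of the argument in Corollary \ref{cor_fp}.
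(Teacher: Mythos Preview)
Your overall strategy coincides with the paper's: take $S$ to be the JSJ tree over slender subgroups relative to $\calp$, use its acylindricity to invoke Theorem~\ref{main}, and treat the flexible QH case separately as in Corollary~\ref{cor_fp}. The handling of the non-trivial and flexible cases is essentially the paper's argument.

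The genuine gap is your reduction step. You correctly notice that the universal property of the \emph{relative} JSJ only guarantees ellipticity of $S$ in slender trees $T$ that are relative to $\calp$, and you try to reduce an arbitrary slender $T$ to such a tree by collapsing the $G$-orbit of edges lying on a $P_i$-invariant line. This does not work: collapsing a $G$-orbit of edges can only turn hyperbolic elements into elliptic ones, and there is no reason a non-parabolic element $h$ cannot have its entire axis contained in that orbit (for instance, if $T$ has a single orbit of edges the collapse yields a point). So the claim that the elliptic/hyperbolic status of non-parabolic elements, or the freeness of $H$, is preserved is simply false. You flag this as the ``main anticipated obstacle'', and indeed it is fatal as stated.

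The paper avoids the issue entirely by citing a stronger fact from \cite{GL_JSJ} (Theorem~9.18 together with Corollary~4.16): for $G$ hyperbolic relative to slender parabolics, the edge stabilizers of the relative slender JSJ tree $S$ are elliptic in \emph{every} slender $G$-tree $T$, not merely in those relative to $\calp$ (equivalently, the relative JSJ is already the absolute slender JSJ). With this in hand no reduction is needed and Theorem~\ref{main} applies directly. A secondary point: in the flexible case the fiber is finite (again by Theorem~9.18 of \cite{GL_JSJ}), not merely slender; and for the second bullet in the QH case you must argue that every non-trivial element of $H$ (not just the generators) has filling image, which requires the rose description of $S_H/H$ as in the proof of Theorem~\ref{MaSi}.
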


\begin{proof}
 We let $S$ be a JSJ tree over slender groups relative to the parabolic subgroups $P_i$ (which we may assume not to be virtually cyclic), see \cite{GL_JSJ}, in particular Theorem 9.18 and Corollary 4.16. It is 2-acylindrical (stabilizers of segments of length 3 are finite with bounded cardinality), and its edge stabilizers are elliptic in every slender tree $T$.   As in the previous proof, 
 we   apply Theorem \ref{main} to $S$.   In the flexible case,  $G$ is QH with finite fiber by Theorem 9.18 of \cite{GL_JSJ}.
\end{proof}

 \begin{cor}\label{csa}
 Let $G$ be a torsion-free CSA group.  
Let $w_n$ be a random element of $G$. 
With probability going to 1 as $n\to\infty$, there is no  splitting of $G$  over a finitely generated abelian subgroup   relative to $w_n$.

\end{cor}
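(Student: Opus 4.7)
The plan is to mimic the proof of Corollary \ref{premcor}, replacing the relative slender JSJ over parabolics by the abelian JSJ tree of a torsion-free CSA group. We may assume $G$ is not abelian, since any abelian group is slender and non-cyclic abelian groups admit no splitting over a proper finitely generated abelian subgroup that is relative to a non-identity element without being trivial; in particular, $G$ is not virtually cyclic, since $G$ is torsion-free.

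I would first invoke the abelian JSJ theory developed in \cite{GL_JSJ}: for a torsion-free CSA group $G$ there is a JSJ tree $S$ for the class of splittings of $G$ over finitely generated abelian subgroups, this tree is acylindrical (as mentioned in the introduction), and its edge stabilizers are universally elliptic in the class, i.e.\ elliptic in every splitting of $G$ over a finitely generated abelian subgroup. Since finitely generated abelian groups are slender, $S$ is a slender tree; and for any splitting $T$ of $G$ over a finitely generated abelian subgroup, $S$ is elliptic with respect to $T$ by universal ellipticity.

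If $S$ is non-trivial, I would apply Theorem \ref{main} directly: with probability tending to $1$ as $n\to\infty$, a random element $w_n$ acts freely on every non-trivial slender tree $T$ such that $S$ is elliptic with respect to $T$; in particular $w_n$ is hyperbolic in every splitting of $G$ over a finitely generated abelian subgroup, so no such splitting is relative to $w_n$.

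If $S$ is a point, then $G$ is either rigid (in which case $G$ has no splitting over a finitely generated abelian subgroup at all, and the conclusion is vacuous) or is flexible, in which case by the classification of flexible vertices in \cite{GL_JSJ} the group $G$ is quadratically hanging; since $G$ is torsion-free, $G$ is the fundamental group of a closed hyperbolic $2$-orbifold $\Sigma$. The main obstacle is this last case, and I would handle it exactly as at the end of the proof of Corollary \ref{cor_fp}: fix a closed geodesic $\gamma$ filling $\Sigma$, and apply \cite{MaSi_random} to the action of $G$ on the hyperbolic plane to see that, with probability going to $1$, the axis of $w_n$ fellow-travels an arbitrarily long translate of $\gamma$. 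This forces the closed geodesic representing the conjugacy class of $w_n$ to meet every essential simple closed geodesic of $\Sigma$, hence $w_n$ is hyperbolic in every splitting dual to such a geodesic; and every splitting of $G$ over a finitely generated abelian subgroup is dual to such a geodesic in this setting.
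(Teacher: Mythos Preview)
Your proof is correct and follows essentially the same route as the paper: take an acylindrical JSJ tree $S$ over finitely generated abelian subgroups, note it is slender and elliptic with respect to every such splitting, apply Theorem \ref{main}, and handle the flexible surface case as in Corollary \ref{cor_fp}. The paper is slightly more specific in that it invokes the tree-of-cylinders construction $(T_a)^*_c$ from Corollary 9.1 of \cite{GL_JSJ} (and Theorem 9.5 when all abelian subgroups are finitely generated) to guarantee simultaneously that $S$ is acylindrical \emph{and} compatible with every tree over finitely generated abelian edge groups; your appeal to ``the abelian JSJ tree'' and the introduction is correct in spirit but hides that not every JSJ tree in the deformation space is acylindrical.

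One small correction: your opening reduction is muddled. Not every abelian group is slender (only finitely generated ones are), and $\bbZ^2$ certainly does split over a proper finitely generated abelian subgroup relative to a non-identity element. What you want to say is simply that if $G$ is abelian then $G$ is virtually cyclic or the statement fails (as for $\bbZ^2$), so one should assume $G$ non-abelian; the paper implicitly excludes this case just as Corollaries \ref{cor_fp} and \ref{premcor} exclude virtually $\bbZ^2$ and slender groups.
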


 Recall that a group is CSA if its maximal abelian subgroups are malnormal.

\begin{proof}
If all abelian subgroups of $G$ are finitely generated (hence slender), the proof is the
same as  that of  the previous corollary, using a JSJ decomposition over abelian groups relative to all non-cyclic abelian subgroups (see Theorem 9.5 of \cite{GL_JSJ}).

In general, we apply Corollary 9.1 of \cite{GL_JSJ} with $\cala$ the family of all finitely generated abelian subgroups,   $\cals$ the family of all abelian subgroups (note that conditions (4b) and (4c) of the corollary are satisfied), and $\calh=\es$. We obtain a tree $S=(T_a)^*_c$ which is a JSJ tree over $\cala$ (hence has finitely generated edge stabilizers) relative to all non-cyclic abelian subgroups. It is compatible with every tree $T$ with edge stabilizers in $\cala$, in particular it is elliptic with respect to $T$, and we can argue as before.
\end{proof}

\begin{rem}
 If $G$ is finitely presented, there is no splitting $T$ of $G$ relative to $w_n$ over \emph{any} abelian subgroup. To see this, we apply  Theorem  6.36 of \cite{GL_JSJ},  with  $\cala$ the family of \emph{all} abelian subgroups and $\calh=\es$. By Theorem 2.20 of \cite{GL_JSJ}, there is a JSJ tree $S$ over $\cala$ with finitely generated (hence slender) edge stabilizers. The edge stabilizers of $T$ do not have to be slender, but they are slender in $S$ and we use Remark \ref{petitstrucs}.
\end{rem}

\begin{cor}
Assume that  $G$ has infinitely many ends,
and let $w_n$ be a random element of $G$. 
With probability going to 1 as $n\to\infty$, there is no  splitting of $G$  over a slender subgroup   relative to $w_n$.
\end{cor}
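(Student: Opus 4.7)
The plan is to construct a well-behaved tree $S$ to which Theorem \ref{main} can be applied, with $p=1$ and $H=\langle w_n\rangle$, taking advantage of the fact that the ellipticity hypothesis will be automatic here. First I would invoke Stallings' theorem on ends of groups: since $G$ has infinitely many ends (in particular more than one), $G$ splits non-trivially over a finite subgroup. Taking the minimal Bass-Serre tree of such a splitting yields a non-trivial $G$-tree $S$ whose edge stabilizers are finite. Because $S/G$ has finitely many edges, the edge stabilizers have uniformly bounded cardinality, say at most $C$; since stabilizers of longer segments are contained in edge stabilizers, they are also bounded by $C$, so the action on $S$ is ($1$-)acylindrical. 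Finite groups are slender, so $S$ is a slender tree. Finally, $G$ is not virtually cyclic, since virtually cyclic groups have at most $2$ ends.

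Next I would observe the key fact that makes the ellipticity hypothesis of Theorem \ref{main} automatic in this setting: any finite subgroup of $G$ is elliptic in every $G$-tree (the circumcenter of any finite orbit is a fixed point). In particular, the finite edge stabilizers of $S$ fix points in every slender tree $T$, so $S$ is elliptic with respect to every non-trivial slender $T$.

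Applying Theorem \ref{main} then gives that, with probability going to $1$ as $n\to\infty$, the cyclic subgroup $\langle w_n\rangle$ acts freely on every non-trivial slender tree $T$. Equivalently, $w_n$ is hyperbolic in every such $T$, so no splitting of $G$ over a slender subgroup is relative to $w_n$, which is the desired conclusion. I do not expect any serious obstacle: all the hard work is packaged into Theorem \ref{main}, and the extra input required here, namely a non-trivial acylindrical slender tree whose edge stabilizers are elliptic in every $G$-tree, is supplied very cleanly by Stallings' theorem combined with the trivial ellipticity of finite subgroups.
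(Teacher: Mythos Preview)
Your proposal is correct and follows essentially the same approach as the paper: take $S$ to be a tree with finite edge stabilizers (coming from Stallings' theorem), note that such an $S$ is elliptic with respect to any $T$, and apply Theorem~\ref{main}. You have simply spelled out in more detail the verification of the hypotheses (acylindricity, slenderness, non-virtual-cyclicity, ellipticity of finite groups).
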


\begin{proof}
 Apply Theorem \ref{main} with $S$ a tree with finite edge stabilizers  (such an $S$ is elliptic with respect to any $T$).  
\end{proof}

\begin{rem}
 The result  remains true if the edge group of the splitting is only assumed not to split over a finite group.
\end{rem}

\begin{cor}
Assume that  $G$ splits over a slender almost malnormal subgroup $H$, and let $w_n$ be a random element of $G$. 
With probability going to 1 as $n\to\infty$, there is no  splitting of $G$  over a slender subgroup   relative to $w_n$ and $H$.
\end{cor}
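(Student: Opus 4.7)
The plan is to produce a tree $S$ that fits the hypotheses of Theorem \ref{main}, and then argue that any splitting $T$ of $G$ over a slender subgroup relative to $w_n$ and $H$ would force $w_n$ both to fix and not to fix a point in $T$. The natural candidate is the Bass-Serre tree $S$ of the given splitting of $G$ over $H$: it is non-trivial and its edge stabilizers are conjugates of $H$, hence slender, so $S$ is a non-trivial slender tree. We may assume $G$ is not virtually cyclic, otherwise the statement is vacuous.

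The core technical step is to show that the action of $G$ on $S$ is acylindrical; this is where almost malnormality is essential. If $H$ is finite, edge stabilizers are uniformly bounded and acylindricity is immediate. If $H$ is infinite, almost malnormality forces $H$ to be self-normalizing, because for $g\in N_G(H)\setminus H$ we would have $gHg^{-1}\cap H=H$, which is infinite, contradicting the bound $C$. In particular, distinct edges of $S$ have distinct stabilizers, and any segment of length $\geq 2$ in $S$ has stabilizer contained in some $g_1Hg_1^{-1}\cap g_2Hg_2^{-1}$ with $g_1^{-1}g_2\notin H$, which has cardinality at most $C$.

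With $S$ in hand, consider any non-trivial slender tree $T$ relative to both $w_n$ and $H$. Since $H$ fixes a point in $T$, every conjugate of $H$ does too, so every edge stabilizer of $S$ is elliptic in $T$; that is, $S$ is elliptic with respect to $T$. Applying Theorem \ref{main} with $p=1$, with probability tending to $1$ the subgroup $\grp{w_n}$ acts freely on every such $T$, so in particular $w_n$ fixes no point in $T$. This contradicts the assumption that $T$ is relative to $w_n$, completing the argument.

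The main obstacle is the acylindricity verification above; once it is established, the rest of the argument is a direct adaptation of the proofs of Corollaries \ref{premcor} and \ref{csa}. A minor subtlety is the degenerate virtually cyclic case, which can be dispatched at the outset.
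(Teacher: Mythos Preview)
Your proof is correct and follows exactly the paper's approach: take $S$ to be the Bass-Serre tree of the given splitting over $H$, observe that almost malnormality makes the action acylindrical, note that $S$ is elliptic with respect to any $T$ relative to $H$, and invoke Theorem \ref{main}. The paper's proof is just two sentences and asserts acylindricity without justification; your argument via self-normalization of $H$ and the resulting bound on stabilizers of length-$2$ segments is a correct way to fill in that detail. One small quibble: the virtually cyclic case is not literally vacuous (such a $G$ can split, e.g.\ $D_\infty$), but it is easily handled separately and the paper does not address it either.
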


Recall that $H$ is almost malnormal if  there exists $C$ such that $gHg\m\cap H$ has   cardinality at most $C$ for all $g\notin H$.

\begin{proof}
 Apply Theorem \ref{main} with $S$  the  given splitting of $G$ over $H$. Almost malnormality of $H$ implies that it is acylindrical. 
 Edge stabilizers of $S$ are conjugate to $H$, hence elliptic in  any tree $T$ relative to $H$. 
\end{proof}

\section{Automorphisms}

We now prove several   results saying that    
  few automorphisms of a given group $G$ leave a random subgroup invariant.   We  shall consider relatively hyperbolic groups, before focusing on the specific case of free groups.

Before doing that, we   note the following consequence of \cite{GL6} (Theorem 7.6 or 7.14).

\begin{thm} Let $G$ be a  
hyperbolic group, and $g\in G$ an element of infinite order. If $G$  does not split 
relative to $g$ over a virtually cyclic group with infinite center,
the stabilizer of $g$ in $\Aut(G)$ 
is virtually cyclic (it is virtually generated by 
 the conjugation by $g$).  \qed
 \end{thm}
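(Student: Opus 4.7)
The plan is to reduce the claim to finiteness of $\Out(G,[g])$, the subgroup of $\Out(G)$ fixing the conjugacy class $[g]$, and then invoke the cited theorem of \cite{GL6}. The first step is to consider the natural homomorphism $\Aut(G,g):=\Stab_{\Aut(G)}(g)\to\Out(G)$. Its image lies in $\Out(G,[g])$, and is in fact all of it: any $[\alpha]$ fixing $[g]$ satisfies $\alpha(g)=hgh\m$ for some $h\in G$, so $\ad_h\m\circ\alpha\in\Aut(G,g)$ has the same image in $\Out(G)$. The kernel consists of the inner automorphisms $\ad_h$ with $hgh\m=g$, and is therefore isomorphic to $Z_G(g)/Z(G)$.

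Next, I would use standard facts about hyperbolic groups to understand this kernel. Since $g$ has infinite order in the hyperbolic group $G$, the centralizer $Z_G(g)$ is virtually cyclic with $\grp{g}$ of finite index. We may assume $G$ is non-elementary (if $G$ is virtually cyclic then $\Aut(G)$ itself is virtually cyclic and the conclusion is immediate), in which case $Z(G)$ is finite. Hence the kernel $Z_G(g)/Z(G)$ is virtually infinite cyclic, and on a finite-index cyclic subgroup it is generated by the image of $g$, i.e.\ by $\ad_g$.

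It then remains to show that $\Out(G,[g])$ is finite, and this is exactly the content of Theorems 7.6 and 7.14 of \cite{GL6}: for a hyperbolic group $G$, infiniteness of $\Out(G,[g])$ forces the existence of a splitting of $G$ relative to $g$ over a virtually cyclic subgroup with infinite center. This is the main substantive input and is where the real obstacle lies; it is proved in \cite{GL6} via a Bestvina--Paulin degeneration of an infinite sequence of pairwise non-conjugate representatives in $\Aut(G,g)$, followed by an analysis of the limiting $\R$-tree action through Rips theory, the element $g$ being elliptic in the limit because it is fixed by every representative. Our hypothesis precisely rules such a splitting out, so $\Out(G,[g])$ must be finite, and combining this with the virtually cyclic kernel (virtually generated by $\ad_g$) yields the desired conclusion. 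Everything beyond the cited theorem is short-exact-sequence bookkeeping.
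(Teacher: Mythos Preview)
Your argument is correct and matches the paper's approach: the paper simply records the statement with a \qed{} and a citation to Theorems 7.6/7.14 of \cite{GL6}, leaving the short-exact-sequence bookkeeping implicit, and you have filled in exactly that bookkeeping. One minor remark: you do not actually need surjectivity onto $\Out(G,[g])$, only that the image is contained in it (hence finite once $\Out(G,[g])$ is), but the surjectivity you prove is correct and harmless.
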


In particular, using \cite{CaMa_virtual}, we see that, if $w\in\F_k$ is represented by a cyclically reduced word containing every reduced word of length 3, then the stabilizer of  $w$ in $\Aut(\F_k)$ is virtually cyclic.

\subsection{Relatively hyperbolic groups}

\begin{thm} \label{pasdaut}
Assume that $G$ is hyperbolic relative to a finite family $\calp$ of slender subgroups. 
Let $H=\grp{w_{1,n},\dots,w_{p,n}}$ be a random subgroup as in Definition \ref{rw}.
With probability going to 1 as $n\to \infty$, the subgroup $\Inn_H(G)\in\Aut(G)$ generated by conjugations by elements of $H$ has finite index in the group $\Aut(G,H)$
  of automorphisms of $G$ leaving $H$ invariant. 

\end{thm}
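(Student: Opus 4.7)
The plan is to argue by contradiction, following the now-standard passage from infinite outer automorphism groups to non-trivial splittings via Bestvina--Paulin limits, and then to invoke Corollary \ref{premcor}. Suppose $\Aut(G,H)/\Inn_H(G)$ is infinite, and choose a sequence $(\alpha_n)_{n\ge 1}$ of representatives of pairwise distinct cosets. Each $\alpha_n$ permutes the finite collection of conjugacy classes of subgroups in $\calp$, so after passing to a subsequence one may assume that every $\alpha_n$ preserves each such conjugacy class, i.e.\ is relative to $\calp$ in the usual sense.

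Next I would run the Bestvina--Paulin limit construction in the relatively hyperbolic setting (as developed by Belegradek--Szczepa\'nski and Groves). Twist each $\alpha_n$ by a suitable inner automorphism so as to minimise the maximal displacement of a fixed finite generating set of $G$ in the word metric, rescale that metric by the resulting displacement, and extract a subsequence converging in the equivariant Gromov--Hausdorff topology. The limit is a non-trivial minimal isometric action of $G$ on an $\bbR$-tree $T_\infty$ in which every $P\in\calp$ is elliptic and in which arc stabilizers are slender (the latter being forced by slenderness of $\calp$ together with the general theory of stable actions of relatively hyperbolic groups with slender parabolics).

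I would then feed $T_\infty$ into Guirardel's version of the Rips machine to produce a non-trivial simplicial action of $G$ on a tree $T$ with slender edge stabilizers, relative to $\calp$ and to every subgroup which is elliptic in $T_\infty$ --- in particular, as explained below, to $H$. This contradicts the second part of Corollary \ref{premcor}: with probability tending to $1$, the random subgroup $H$ acts freely on every non-trivial slender tree, hence has no global fixed point, whereas $H$ would be elliptic in $T$.

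The hard step, and the one where the hypothesis $\alpha_n\in\Aut(G,H)$ is genuinely exploited, is to arrange that $H$ be elliptic in $T_\infty$. Because the $\alpha_n$'s are distinct modulo $\Inn_H(G)$ and not merely modulo $\Inn(G)$, the inner twist used in the Bestvina--Paulin construction can be chosen inside $\Inn_H(G)$; the invariance $\alpha_n(H)=H$ then keeps the displacement of a fixed finite generating set of $H$ bounded along the sequence, so $H$ survives the rescaling with bounded orbits and fixes a point in $T_\infty$. The complementary technical point is to verify that the rescaling factors nonetheless tend to infinity, producing a genuinely non-trivial limit action, which should follow from the cosets $\alpha_n\Inn_H(G)$ being pairwise distinct together with a standard Paulin-style argument showing that no subsequence of the $\alpha_n$'s becomes eventually conjugate by a bounded inner automorphism.
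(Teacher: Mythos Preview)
Your overall strategy---pass from an infinite $\Aut(G,H)/\Inn_H(G)$ to a non-trivial slender splitting relative to $H$, then contradict Corollary~\ref{premcor}---is exactly the paper's, but the step you flag as ``hard'' is a genuine gap, and your proposed argument for it does not work.

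You claim that if the inner twist in the Bestvina--Paulin construction is chosen in $\Inn_H(G)$, then ``the invariance $\alpha_n(H)=H$ keeps the displacement of a fixed finite generating set of $H$ bounded''.  This is not so.  After composing with $\ad_{h_n}$ for any $h_n\in H$, the image of a generator $t$ of $H$ is $h_n\alpha_n(t)h_n^{-1}\in H$; but $\alpha_n|_H$ is an automorphism of the free group $H$ (of rank~$p\ge 1$), and nothing prevents it from sending generators to arbitrarily long words.  Conjugating inside $H$ can only reduce these lengths down to the translation lengths of the $\alpha_n(t)$ on the Cayley tree of $H$, and there is no a priori bound on those.  In effect, bounding $\alpha_n|_H$ modulo $\Inn_H$ is precisely the conclusion you are trying to establish, so the argument is circular.  (There is also a secondary issue: if you minimise displacement only over $\Inn_H$-twists rather than all of $\Inn(G)$, you must separately rule out a global fixed point in the limit.)

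The paper circumvents this by a different idea: it first uses the results of Maher--Sisto together with \cite{DGO_HE,AMS_commensurating} to show that, generically, $G$ is hyperbolic \emph{relative to $\calp\cup\{H\}$} (not just to $\calp$).  Once $H$ is a parabolic subgroup, the passage from an infinite relative outer automorphism group to a splitting (packaged as Corollary~7.13 of \cite{GL6}, which encapsulates the Bestvina--Paulin/Rips machinery) automatically produces a splitting relative to all parabolics, hence relative to $H$; see Lemma~\ref{scind}.  This is the missing ingredient in your sketch: to force $H$ to be elliptic in the limit, you should first promote $H$ to a peripheral subgroup, which is a genuine (and non-obvious) input from \cite{MaSi_random}.
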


The proof requires a lemma.  
 A group $P$ is \emph{small} if it does not contain $\F_2$.

\begin{lem}\label{scind}
Let  $\calp_0$ be a finite family of small finitely generated subgroups $P_i$ which are not virtually cyclic.
Assume that $G$ is hyperbolic relative to $\calp_0$,  and also relative to $ \calp_0\cup\{ H\}$ with $H$ infinite and finitely generated. Also assume that non-small subgroups of $H$ have finite centralizer (in $H$ hence also in $G$).

If $\Inn_H(G)$
 has infinite index in 
$ \Aut(G,H)$,
then $G$ has a splitting over a small group,  and this splitting is relative to the $P_i$'s and to some $H_0\inc H$ which is equal to $H$ or contains $\F_2$.

\end{lem}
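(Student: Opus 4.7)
The plan is to apply the Bestvina--Paulin strategy in the relatively hyperbolic setting: convert the assumption that $\Inn_H(G)$ has infinite index in $\Aut(G,H)$ into a non-trivial action of $G$ on an $\R$-tree, then pass to a simplicial splitting via Rips theory.

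First, I reduce to automorphisms preserving the peripheral structure. Since $\Aut(G)$ permutes the finitely many conjugacy classes of maximal parabolic subgroups of the relatively hyperbolic structure $\calp_0\cup\{H\}$, a finite-index subgroup of $\Aut(G,H)$ preserves each conjugacy class $[P_i]$ and fixes $H$ setwise; it still has infinite index modulo $\Inn_H(G)$, so I obtain an infinite sequence $\alpha_n$ of representatives that are pairwise inequivalent in $\Aut(G,H)/\Inn_H(G)$.

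Next, I build a limiting $\R$-tree. Let $X$ be the Groves--Manning cusped space associated with $(G,\calp_0\cup\{H\})$. Consider the twisted actions $g\mapsto\alpha_n(g)$, normalized by $\lambda_n=\inf_{h\in H}\max_i d_X(x_0,\,h\alpha_n(s_i)h^{-1}x_0)$ for a fixed finite generating set $\{s_i\}$ of $G$. The infimum runs exactly over $\Inn_H(G)$, the group we are quotienting by, so the pairwise inequivalence of the $\alpha_n$ forces $\lambda_n\to\infty$: a bounded subsequence would, by compactness, produce a subsequence of the $\alpha_n$ coinciding modulo $\Inn_H(G)$. Rescaling by $1/\lambda_n$ and passing to a Gromov--Hausdorff limit yields an isometric action of $G$ on an $\R$-tree $T$ with no global fixed point. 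Each peripheral subgroup has zero stable translation length on the cusped space and hence on $T$, so every $P_i$ and also $H$ is elliptic in $T$.

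Finally, I extract the simplicial splitting. Applying the Rips--Bestvina--Feighn machine to the stable action of $G$ on $T$ produces a non-trivial simplicial tree $T'$ on which $G$ acts with small edge stabilizers, relative to every subgroup elliptic in $T$; in particular $T'$ is relative to $\calp_0$. To identify $H_0$, I examine how $H$ sits in $T'$: if $H$ is already elliptic in $T'$, take $H_0=H$; otherwise the point $p\in T$ fixed by $H$ lies in a non-simplicial Rips component (IET, Levitt, or axial), and the Rips structure transfers $H$'s action near $p$ to a non-trivial splitting dual to the component. An axial component would force $H$ to act on a line and thus be virtually abelian, which the hypothesis on centralizers of non-small subgroups of $H$ excludes unless $H$ itself is small (in which case $H_0=H$ still works). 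In the remaining IET/Levitt case, the dual vertex groups are orbifold-type groups containing $\F_2$, and $H$ meets such a vertex group in a subgroup $H_0\supseteq\F_2$ that is elliptic in $T'$. The main obstacle is this last identification: ensuring that whenever $H$ is not fully elliptic in $T'$, some $H_0\subset H$ with $\F_2\subset H_0$ arises as an elliptic subgroup, which hinges on controlling $H$'s behavior through the Rips decomposition and using the centralizer hypothesis to rule out axial Rips components.
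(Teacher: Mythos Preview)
Your Bestvina--Paulin approach is natural---it is essentially what underlies the results of \cite{GL6} that the paper invokes---but as written it contains a genuine gap.

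The crucial error is the claim that the Rips machine ``produces a non-trivial simplicial tree $T'$ on which $G$ acts with small edge stabilizers.'' You built the limiting $\R$-tree $T$ from the cusped space for the peripheral structure $\calp_0\cup\{H\}$, so arc stabilizers of $T$ are virtually cyclic or parabolic for \emph{that} structure: conjugate into some $P_i$ (hence small) or conjugate into $H$. Nothing prevents an arc stabilizer from being a non-small subgroup of $H$, and the Rips output will then have edge groups of that form. This is exactly the difficulty the paper's proof confronts: the first application of \cite[Corollary~7.13]{GL6} (with $\calp=\calp_0\cup\{H\}$) gives a splitting with edge groups ``small or contained in $H$,'' and the substantive case is when all edge groups lie in $H$. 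The paper then uses the centralizer hypothesis to show the group of twists is finite, locates the vertex carrying $H$, extracts a non-small incident edge group $H_0\subset H$, and runs \cite[Corollary~7.13]{GL6} a \emph{second} time with the smaller peripheral structure $\calp=\calp_0$ and $\calh=\{H_0\}$; only then do the edge groups become small (virtually cyclic or contained in some small $P_i$). Your single pass cannot reach that conclusion, and your Rips-component analysis does not repair it: since $H$ is elliptic in $T$ it is automatically elliptic in the skeleton $T'$, so only the branch $H_0=H$ occurs, leaving you with a splitting relative to $H$ over a possibly non-small subgroup of $H$. Note also that the centralizer hypothesis is not there to rule out axial components; its role in the argument is to make the twist group finite when edge groups sit inside $H$.

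A smaller point: your normalization minimizes displacement only over the $H$-orbit of $x_0$, not over all of $X$. You correctly argue $\lambda_n\to\infty$, but ``no global fixed point in $T$'' requires comparing with the infimum over all of $X$; a separate argument (using that $H$ is self-normalizing as a maximal parabolic) is needed when that global infimum stays bounded.
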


If  
  $\calp_0=\es$ and
 $H=\{1\}$, the lemma reduces to the standard statement  that a hyperbolic group $G$ with $\Out(G)$ infinite splits over a small group.

\begin{proof}
Our assumptions imply that the group $\Out(G,\calp_0\cup\{ H\})\inc\Out(G)$ of 
  outer automorphisms sending     the $P_i$'s and $H$ to conjugates is infinite. This is because  $H$ is infinite and almost malnormal  (so   inner automorphisms of $G$ leaving $H$ invariant  are conjugations by elements of  $ H$), and every automorphism of $G$ maps $P_i$ to a conjugate of some $P_j$ (so $\Out(G,\calp_0)$ has finite index in $\Out(G)$).

We view  $G$ as hyperbolic relative to   $ \calp_0\cup\{ H\}$, and we apply Corollary 
 7.13 of \cite{GL6} with $\calp= \calp_0\cup\{ H\}$ and $\calh$ empty. We get a graph of groups decomposition  $\Gamma$  of $G$ relative to the $P_i$'s and $H$, with edge groups  small or contained in $H$ (up to conjugacy).  The lemma is proved if some edge group of $\Gamma$ is small, so we assume that all edge groups are conjugate to subgroups of $H$.

The assumption about centralizers implies that the group of twists of $\Gamma$ is finite  (see \cite{GL6} for definitions not given here).
By  \cite[Corollary 7.13]{GL6}, infiniteness of $\Out(G,\calp_0\cup\{ H\})$ implies that $\Gamma$ has a vertex group  $G_v$   with $\Out(G_v;\Inc_v^{(t)})$ infinite: $G_v$ has infinitely many outer automorphisms acting on  incident edge groups as conjugations by elements of $G_v$. 

Corollary 7.13 of \cite{GL6} also implies that 
 $G_v$ is a maximal parabolic subgroup. It
  has to be conjugate to $H$: otherwise  it  would be conjugate to some $P_i$, and incident edge groups would be small.
  
 The automorphisms in $\Out(G_v;\Inc_v^{(t)})$ extend to $G$ and we get that   
 $\Out(G,\calp_0 )\cap\Out(G,\{H_0\}^{(t)})$ is infinite for some non-small $H_0\inc H$ (an incident edge group at $v$).    We now view $G$  as hyperbolic relative to $\calp_0$ only, and we apply Corollary 7.13 of \cite{GL6} with $\calp=\calp_0$ and $\calh=\{H_0\}$. We get a splitting of $G$ which is relative to the $P_i$'s and $H_0$, over a group which is virtually cyclic or contained in some $P_i$ (up to conjugacy), hence small.
\end{proof}

\begin{proof}[Proof of Theorem \ref{pasdaut}]
First assume that $G$ is torsion-free. 
By \cite{MaSi_random}, with probability going to 1, the group $H$ is free and malnormal.  In order to apply Lemma \ref{scind}, we just need to check that 
  $G$ is hyperbolic relative to $\calp\cup\{H\}$ (we may assume with no loss of generality that no $P_i\in\calp $ is virtually cyclic).
  
Recall \cite[Proposition 4.28]{DGO_HE} that $G$ being hyperbolic relative to $\calp$ is equivalent to $\calp$ being hyperbolically embedded in $(G,X)$ (with $X$ a finite generating set of $G$ relative to $\calp$).  
  By Theorem 5 of \cite{MaSi_random}, the group $H$ is quasi-isometrically embedded and geometrically separated in Cay$(G,X\cup \calp)$ with probability going to 1, hence (\cite[Theorem 3.9]{AMS_commensurating}) $\calp\cup\{H\}$ is hyperbolically embedded in $(G,X)$,
i.e.\ $G$ is  indeed hyperbolic relative to $\calp\cup\{H\}$ (since $X$ is finite).

If Theorem \ref{pasdaut} is false, Lemma \ref{scind} provides  a splitting of $G$   over a slender subgroup which contradicts Corollary \ref{premcor}.

We now allow torsion. 
Let  $E(G)$ be  the maximal finite normal subgroup of $G$,  and  $\bar H=HE(G)$. With probability going to 1 it is virtually free (hence satisfies the condition on centralizers in Lemma \ref{scind}) and  $G$ is hyperbolic relative to $\calp\cup\{\bar H\}$ as above. 
Since $\Aut(G,H)\inc \Aut(G,\bar H)$ (because $E(G)$ is characteristic) and $H$ has finite index in $\bar H$, it suffices to prove
that $\Inn_{\ol H}$ has finite index in $\Aut(G,\bar H)$.
If this does not hold, Lemma \ref{scind} provides a splitting relative to some   infinite subgroup $\bar H_0\inc\bar H$.
 Since $\bar H_0\cap H$ is non-trivial and fixes a point in this splitting, this contradicts Corollary \ref{premcor}. 
\end{proof}

\subsection{Free groups}

Let $\F_k$ be a free group of rank $k\geq 2$.
 We denote by $\ad_h$ the inner automorphism $g\mapsto hgh\m$.

\begin{dfn}
  A subgroup $H<\F_k$ is \emph{$\Aut$-malnormal} if, for any
  $\alpha\in \Aut(F_k)$ such that $\alpha(H)\cap H\neq \{1\}$, there
  exists $h\in H$ such that $\alpha=\ad_h$.
\end{dfn}

Clearly, if $H$ is $\Aut$-malnormal and non-trivial, then the only automorphisms of $G$ preserving $H$ are   conjugations by elements of $H$.
With the notation of Theorem \ref{pasdaut}, this says that $\Aut(G,H)=\Inn_H(G)$ (exactly, not up to finite index).

In this section, we prove two results saying that random subgroups of the free group are $\Aut$-malnormal,
one for groups generated by elements chosen   randomly independently in a ball of large radius,
and one for groups generated by elements coming from independent simple random walks.

We fix a free basis $X$ of $\F_k$.
We view elements   $g\in\F_k$ as reduced words in $X^{\pm1}$, and we write $ | g | $ for the length of $g$.
  Balls are defined using the  generating set $X^{\pm1}$, and we consider the simple random walk  
where $w_{n}=s_1\cdots s_n$ with $s_1,\dots,s_n$ chosen randomly and independently  in $X^{\pm1}$  (equipped with the uniform measure). 

 We say that an event occurs \emph {with probability going to $1$ exponentially fast as $n\to+\infty$} if the probability that it does not occur is bounded by $C\kappa^{-n}$ for some constants $C,\kappa>0$.

 \begin{thm} \label{libre_boule}
Fix $k\ge2$  and $p\ge1$. With probability going to $1$ exponentially fast as $n\to+\infty$, the   subgroup   $H\inc \F_k$ generated by $p$ elements $w_i$ chosen randomly independently in the ball of radius $n$ is $\Aut$-malnormal.
\end{thm}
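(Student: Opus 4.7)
The plan is to combine Whitehead's peak reduction algorithm with equidistribution of subwords in random reduced words, following the same overall strategy that the authors indicate they use for Theorem \ref{libre_RW}. The first step is to establish a list of generic combinatorial properties of the tuple $(w_1,\ldots,w_p)$ which hold with probability $1-O(\kappa^{-n})$ for some $\kappa>1$. Elementary counting in the ball of radius $n$ shows that a uniformly random element has length $n-O(1)$ and is cyclically reduced with exponentially high probability. Because a random element of near-maximal length in the ball is close to a uniform random reduced word of that length, one has Markov-type equidistribution: for any fixed $L$, each cyclic word $w_i$ contains every reduced word of length $L$ as a subword with probability $1-O(\kappa^{-n})$. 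The same counting rules out long common prefixes, suffixes, or internal overlaps among the $w_i^{\pm 1}$. From these properties the Stallings graph of $H=\langle w_1,\ldots,w_p\rangle$ is a rose of $p$ long cyclically reduced loops attached at the basepoint by short stems, so $H$ is free on $\{w_i\}$ and malnormal in $\F_k$.

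The second step is to invoke Whitehead's peak reduction theorem for $p$-tuples and deduce that the generic tuple $(w_1,\ldots,w_p)$ is strictly Whitehead-minimal in its $\Aut(\F_k)$-orbit. Peak reduction asserts that any two tuples in the same orbit are connected by a sequence of Whitehead automorphisms along which the sum of cyclic lengths is unimodal. For each Whitehead automorphism $\sigma$ distinct (modulo $\Inn(\F_k)$) from a permutation-inversion of the basis, the change in cyclic length is a linear combination of letter and transition counts in the $w_i$; the equidistribution property forces this combination into the strictly positive regime. Consequently, modulo inner automorphisms, the only automorphisms sending $(w_1,\ldots,w_p)$ to another tuple of the same cyclic length are permutation-inversions of the basis.

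The third step handles the hypothesis $\alpha(H)\cap H\neq\{1\}$. Pick $h\in H\setminus\{1\}$ with $\alpha(h)\in H$, and express both $h$ and $\alpha(h)$ as reduced words in the basis $\{w_i\}$. The tuple $(\alpha(w_1),\ldots,\alpha(w_p))$ lies in the $\Aut(\F_k)$-orbit of the Whitehead-minimal tuple $(w_1,\ldots,w_p)$; by peak reduction and the strict minimality of the previous step, $\alpha$ is, modulo conjugation by some $g\in\F_k$, a permutation-inversion $\pi$ of the basis. The equation $\pi(h)=g^{-1}\alpha(h)g\in g^{-1}Hg$ then expresses a specific reduced word in $\{w_i\}$ in two ways. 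Equidistribution rules out any non-trivial $\pi$ (for instance, a cyclic permutation of basis elements would alter the pattern of length-$L$ subwords appearing along $h$), so $\alpha=\ad_g$. Malnormality of $H$ applied to $ghg^{-1}\in H$ with $h\neq 1$ forces $g\in H$, and hence $\alpha\in\Inn_H(\F_k)$.

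The main obstacle is the second step: establishing strict Whitehead-minimality of a generic tuple with explicit, exponentially small failure probability. Ordinary peak reduction only prevents a strict decrease of cyclic length, so one must rule out non-inner Whitehead automorphisms that preserve length, and this requires a finer analysis based on precise counts of specific subwords in the $w_i$'s. Once this quantitative minimality is in place, the exponential rate of convergence in the theorem comes directly from the exponential equidistribution of subwords in random reduced words, which is the only probabilistic input to the argument.
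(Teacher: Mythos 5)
Your toolkit (peak reduction plus exponential equidistribution, a Fubini-type reduction from balls to spheres, the rose structure of the Stallings graph, malnormality at the end) is the same as the paper's, but there is a genuine gap at the heart of your third step. From $\alpha(H)\cap H\neq\{1\}$ you only obtain one non-trivial $h\in H$ with $\alpha(h)\in H$; the automorphism $\alpha$ need not preserve $H$, and nothing bounds the lengths of the $\alpha(w_i)$. Saying that $(\alpha(w_1),\dots,\alpha(w_p))$ ``lies in the $\Aut(\F_k)$-orbit of the Whitehead-minimal tuple $(w_1,\dots,w_p)$'' is vacuous---every tuple $(\beta(w_1),\dots,\beta(w_p))$, $\beta\in\Aut(\F_k)$, lies in that orbit---and peak reduction only constrains those automorphisms whose image tuple is again of minimal total cyclic length. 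So you cannot conclude that $\alpha$ is, modulo conjugation, a permutation--inversion of the basis; even in the easier situation where $\alpha$ does preserve $H$, it could carry the basis $(w_i)$ to a much longer basis of $H$, so minimality of the generating tuple alone never enters. The missing idea is precisely the paper's Proposition \ref{lem_PR}: strict Whitehead minimality is proved not for the generating tuple but for the cyclic reduction of \emph{every} non-trivial element of $H$ simultaneously, via the central tree property (Lemma \ref{ctp}): any $g\in H$ reads off an immersed loop made of long outer loops and short central arcs, so its letter and transition frequencies are inherited from the $w_i$'s. Peak reduction is then applied to the pair of cyclic reductions of $h$ and $\alpha(h)$, both of which lie in $H$ and are therefore generically minimal; this gives $\alpha=\ad_g\circ\varphi$ with $\varphi$ a relabeling, the relabeling is killed by a matching-subwords statement (Lemma \ref{stand2}), and malnormality of $H$ (itself derived from Lemma \ref{stand}) gives $g\in H$.

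Two secondary problems. First, equidistribution of subwords of a fixed bounded length $L$ cannot rule out the residual relabeling in your last step: relabelings preserve equidistribution, and generically every reduced word of length $L$ occurs in both $h$ and $\pi(h)$, so ``the pattern of length-$L$ subwords'' does not change; one needs a statement about subwords of length proportional to $n$, namely that no word of length $\beta n$ occurs in the $w_i^{\pm1}$ together with its image under a non-trivial relabeling (Lemma \ref{stand2}), whose proof requires the counting on spheres together with the subexponential-length control. Second, a uniformly random element of the ball of radius $n$ is cyclically reduced only with probability bounded away from $1$ (the first and last letters clash with probability roughly $\tfrac1{2k-1}$), not with exponentially high probability; this is harmless---one works with cyclic reductions, whose cancellation is generically short compared to $n$---but the claim as you state it is false and should not be used as a generic property.
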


See Theorem 8.5 and Proposition 8.7 of \cite{KSS_generic} for the case $p=1$. 

 \begin{thm} \label{libre_RW}
Fix $k\ge2$ and $p\ge1$. 
Let $H=\grp{w_{1,n},\dots,w_{p,n}}$ be the subgroup generated by $p$ elements $w_{1,n},\dots,w_{p,n}$ arising from independent simple random walks of length $n$ in $\F_k$.

With probability going to $1$ exponentially fast 
as $n\to+\infty$, the subgroup $H$ is $\Aut$-malnormal.
\end{thm}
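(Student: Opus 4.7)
The plan is to follow the strategy of Kapovich--Schupp--Shpilrain \cite{KSS_generic} for the case $p=1$, replacing the uniform measure on the ball by the simple random walk. The argument rests on three ingredients: Whitehead's peak reduction (in the version for finitely generated subgroups), equidistribution of short subwords along a typical random walk trajectory, and the observation that a generic Stallings graph admits no non-trivial symmetries modulo conjugation.

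I would first establish the probabilistic inputs. The simple random walk on $\F_k$ has positive drift $\lambda=(k-1)/k$; viewing the ``last letter'' as a finite-state Markov chain and applying standard exponential concentration one obtains, with probability $1-O(\kappa^{-n})$ for some $\kappa>1$: each $w_{i,n}$ is reduced of length proportional to $n$, its cyclically reduced representative $\bar w_{i,n}$ differs from it by a bounded amount, every reduced word of length $L$ (for any fixed $L$) occurs linearly many times as a subword of $\bar w_{i,n}$, and the $\bar w_{i,n}$ are in general position in the sense that no two of them and no two of their cyclic conjugates share a common subword of length $c\log n$ or more. Under these events the Stallings graph $\Theta$ of $H=\grp{w_{1,n},\dots,w_{p,n}}$ is a ``rose'' obtained by wedging $p$ long cyclically reduced loops at a base vertex after a bounded amount of folding, the $w_{i,n}$ freely generate $H$, the Whitehead graph of $H$ at every vertex of $\Theta$ is complete, and $H$ is malnormal in $\F_k$.

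Then I would apply Whitehead's peak reduction. Completeness of the Whitehead graph at every vertex of $\Theta$ makes $H$ minimal in its $\Aut(\F_k)$-orbit for the Stallings-graph complexity, and identifies the length-preserving Whitehead automorphisms of $(\F_k,H)$ with permutations of $X^{\pm1}$; equidistribution of length-$L$ subwords of the $\bar w_{i,n}$ (for $L$ chosen large enough in terms of $k$) then forces any such non-trivial permutation to destroy the generic rose, so the only length-preserving moves acting on the conjugacy class of $H$ are trivial. Suppose now that $\alpha\in\Aut(\F_k)$ satisfies $\alpha(H)\cap H\neq\{1\}$. Then $\alpha(H)$ lies in the $\Aut$-orbit of $H$ and has the same minimal Stallings-graph complexity as $H$; peak reduction expresses $\alpha$ as a product of length-preserving Whitehead moves, which by the previous observation lie in $\Inn(\F_k)$. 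Hence $\alpha=\ad_h$ for some $h\in\F_k$, and since $hHh\m\cap H\neq\{1\}$ the malnormality of $H$ forces $h\in H$.

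The main obstacle is the identification of length-preserving Whitehead automorphisms of $(\F_k,H)$ with permutations of $X^{\pm1}$. For a single cyclically reduced word this is classical (completeness of the Whitehead graph together with Whitehead's lemma), but for a subgroup $H$ the analysis must be carried out simultaneously at every vertex of $\Theta$, and one must rule out Whitehead moves of non-permutation type that a priori could preserve the total edge count of $\Theta$ while redistributing it among vertices. This is where equidistribution of subwords is used in full strength, and it is also the step where having independent random walks rather than ball-uniform elements makes the exponential concentration estimates slightly more delicate; once the last-letter Markov chain is identified, however, the required tail bounds are of the same quality as in the ball case treated in \cite{KSS_generic}.
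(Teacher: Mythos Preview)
Your argument has a genuine gap at the peak-reduction step. You apply peak reduction to the pair of subgroups $H$ and $\alpha(H)$, asserting that $\alpha(H)$ ``has the same minimal Stallings-graph complexity as $H$'' and hence that $\alpha$ factors as a product of length-preserving Whitehead moves. But the hypothesis $\alpha(H)\cap H\neq\{1\}$ says nothing about the complexity of $\alpha(H)$: there is no reason the Stallings graph of $\alpha(H)$ should itself be minimal, and peak reduction only produces a chain of length-preserving moves when \emph{both} endpoints sit at the minimum of the orbit. As written, your argument would at best establish that $\alpha(H)=H$ forces $\alpha\in\Inn_H(\F_k)$, i.e.\ $\Aut(\F_k,H)=\Inn_H(\F_k)$, which is strictly weaker than $\Aut$-malnormality.

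The paper repairs this by applying peak reduction to \emph{elements} rather than subgroups. From $\alpha(H)\cap H\neq\{1\}$ one picks a non-trivial $h_1\in H$ with $h_2=\alpha(h_1)\in H$; the point is that \emph{both} $h_1$ and $h_2$ lie in $H$, so the central-tree structure of the Stallings graph forces the cyclic reductions $\bar h_1,\bar h_2$ to be strictly Whitehead minimal (this must be checked for \emph{every} non-trivial element of $H$, not only the generators). Peak reduction for cyclic words then yields $\alpha=\ad_g\circ\varphi$ with $\varphi$ a relabeling. A second remark: your way of ruling out a non-trivial relabeling (``equidistribution forces any non-trivial permutation to destroy the rose'') does not work as stated, since equidistribution of subword frequencies is permutation-invariant. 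The paper instead uses a subword-matching argument: generically a long subword $u$ of some $w_{i,n}^{\pm1}$ and its image $\varphi(u)$ cannot both appear among the $w_{j,n}^{\pm1}$ unless $\varphi$ is the identity.
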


\begin{cor}
  In the setting of Theorems \ref{libre_boule} and \ref{libre_RW}, 
the only automorphisms of $G$ preserving $H$ are   conjugations by elements of $H$.\qed
\end{cor}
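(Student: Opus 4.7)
The proof plan is to extract the corollary as a one-line consequence of the $\Aut$-malnormality of $H$ established in Theorems \ref{libre_boule} and \ref{libre_RW}. The only thing I have to pair with those theorems is the elementary observation that $H\ne\{1\}$ with probability going to $1$ exponentially fast in each of the two models: in the ball model this is because the ball of radius $n$ in $\F_k$ with $k\ge 2$ has cardinality at least $(2k-1)^n$, so the probability that a chosen generator equals the identity is exponentially small; in the random-walk model it follows from the standard fact that the return probability $P(w_{1,n}=1)$ decays exponentially (the simple random walk on $\F_k$ has positive drift). Intersecting these with the exponentially high-probability event from each theorem, with probability going to $1$ exponentially fast the subgroup $H$ is simultaneously $\Aut$-malnormal and non-trivial.

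Conditioning on that event, I would pick any non-trivial $h_0\in H$ and any $\alpha\in\Aut(\F_k)$ with $\alpha(H)=H$. Since $\alpha$ is injective, $\alpha(h_0)$ is a non-trivial element of $\alpha(H)\cap H$, so $\alpha(H)\cap H\ne\{1\}$. The definition of $\Aut$-malnormality then produces an $h\in H$ with $\alpha=\ad_h$, which is exactly the assertion of the corollary. There is no substantial obstacle here: the entire content of the corollary is already packaged inside the preceding two theorems, and the only residual task is the elementary non-degeneracy check that $H$ is non-trivial, which is the easy exponential-decay bound on $P(w_1=1)$ recalled above.
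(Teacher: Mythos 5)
Your proposal is correct and follows essentially the paper's own route: the corollary is exactly the remark made right after the definition of $\Aut$-malnormality (if $H$ is $\Aut$-malnormal and non-trivial, then $\alpha(H)=H$ forces $\alpha(H)\cap H=H\neq\{1\}$, so $\alpha=\ad_h$ for some $h\in H$), combined with Theorems \ref{libre_boule} and \ref{libre_RW}. Your explicit non-triviality check is a sensible way to make precise the ``non-trivial'' hypothesis that the paper leaves implicit (it is in any case guaranteed generically by the positive-drift/central-tree estimates underlying Proposition \ref{simple}).
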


Both theorems are special cases of the following general statement. 

\begin{prop}\label{simple}
 Fix $k\ge2$ and $p\ge1$. 
Let $ {w_{1,n},\dots,w_{p,n}}$ be independent random variables
 in $\F_k$ satisfying the following conditions:
\begin{itemize}
 \item (Radial symmetry) Given $n$ and $i$, the probability that $w_{i,n}=g$ only depends on the length of the element $g\in\F_k$.

 \item (Positive drift) There exists $L>0$ such that, for each $i$, the probability that $ | w_{i,n} |> Ln$ goes to 1 exponentially fast as $n\to+\infty$.

 \item (Subexponential growth) 
 For any $\theta>0$,   
the probability that   
$| w_{i,n} | \leq e^{\theta n}$ goes to 1 exponentially fast as $n\to+\infty$. 
 \end{itemize}

Then, with probability going to $1$   exponentially fast 
as $n\to+\infty$, the subgroup $H=\grp{w_{1,n},\dots,w_{p,n}}$ is $\Aut$-malnormal.

\end{prop}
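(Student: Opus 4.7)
The plan combines Whitehead's peak reduction with equidistribution of short subwords in long random reduced words in $\F_k$; the hypotheses of radial symmetry, positive drift and subexponential growth together ensure that the $w_{i,n}$ behave like typical long reduced words. First I will establish the following generic properties, each holding with probability going to $1$ exponentially fast: (i) each $w_{i,n}$ is cyclically reduced with $|w_{i,n}|\in[Ln,e^{\theta n}]$; (ii) the tuple $(w_{1,n},\dots,w_{p,n})$ freely generates a subgroup $H$ malnormal in $\F_k$, via a standard Stallings-graph argument so that $\Theta_H$ is a rose with $p$ petals; (iii) every cyclically reduced word of some bounded length $L_0=L_0(k)$ appears as a subword of each $w_{i,n}$ with frequency at least $c\,|w_{i,n}|$ for an absolute constant $c>0$. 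Radial symmetry is essential for (iii): conditionally on its length, $w_{i,n}$ is uniform among reduced words of that length, and for such uniformly random reduced words equidistribution of short subwords follows from a standard Markov-chain argument on $X^{\pm1}$.

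From (iii), with the same probability estimates, the Whitehead graph of $(w_{1,n},\dots,w_{p,n})$ on the vertex set $X^{\pm1}$ is the complete graph minus the edges $\{x,x^{-1}\}$, and in particular has no cut vertex. By Whitehead's peak reduction this means the tuple has minimum total cyclic length in its $\Aut(\F_k)$-orbit, and any element of $\Aut(\F_k)$ preserving this minimum total cyclic length is, up to composition with an inner automorphism, induced by a permutation of $X^{\pm1}$ commuting with the involution $x\mapsto x^{-1}$, a finite subgroup of $\Aut(\F_k)$.

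Next, assume $\alpha\in\Aut(\F_k)$ satisfies $\alpha(H)\cap H\ne\{1\}$, and pick $h\in H\setminus\{1\}$ with $\alpha(h)\in H$. Using the rigidity of $\Theta_H$ (each petal contains every short reduced word), I first reduce to $\alpha(H)=H$ setwise after composing $\alpha$ with an inner automorphism $\ad_c$ for some $c\in\F_k$; malnormality of $H$ then lets one choose $c\in H$. With $\alpha$ preserving $H$, the minimum-length statement of the previous paragraph combined with peak reduction forces $\alpha$ to send each $w_{i,n}$ to a cyclic conjugate in $\F_k$ of $w_{\sigma(i),n}^{\pm1}$ for some permutation $\sigma$; independence and radial symmetry of the $w_{i,n}$ then rule out every non-trivial $\sigma$ and every sign change with exponentially high probability, giving $\alpha\in\Inn_H(\F_k)$.

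The main obstacle I anticipate is the reduction from $\alpha(H)\cap H\ne\{1\}$ to $\alpha(H)=H$ after composition with an inner automorphism. Malnormality of $H$ gives only $gHg^{-1}\cap H\ne\{1\}\Rightarrow g\in H$, which does not directly control $\alpha(H)\cap H$ for non-inner $\alpha$. Bridging the gap should rely on a quantitative Stallings-graph argument: the intersection $\alpha(H)\cap H$ is a finitely generated free subgroup whose Stallings graph immerses into both $\Theta_H$ and $\Theta_{\alpha(H)}$, and the abundance of short subwords on every petal of $\Theta_H$ forces any sufficiently large such immersion to be essentially onto $\Theta_H$, eventually yielding that $\alpha(H)$ and $H$ are $\F_k$-conjugate.
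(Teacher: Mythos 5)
Your outline breaks down at exactly the point you flag as the ``main obstacle,'' and that obstacle is the heart of the statement, so as it stands there is a genuine gap. $\Aut$-malnormality only hands you one non-trivial element $h\in H$ with $\alpha(h)\in H$; nothing gives you $\alpha(H)=H$, and your proposed bridge does not close the gap: the intersection $\alpha(H)\cap H$ may well be cyclic, its Stallings graph is then a single immersed loop in $\Theta_H$, and the fact that this loop is long and crosses the petals says nothing about $\Theta_{\alpha(H)}$ or about $\alpha(H)$ being conjugate to $H$ --- controlling $\alpha(H)$ is precisely what has to be proved. Moreover, even granting $\alpha(H)=H$, your next step is not justified: peak reduction applied to the generating tuple only says that any automorphic image of $(w_{1,n},\dots,w_{p,n})$ has total cyclic length at least as large; since $\alpha|_H$ could a priori be a complicated automorphism of $H$, the tuple $(\alpha(w_{i,n}))$ need not be of minimal length, so nothing forces $\alpha(w_{i,n})$ to be a cyclic conjugate of some $w_{\sigma(i),n}^{\pm1}$. (Two smaller issues: a random walk word is not cyclically reduced with probability tending to $1$, so your property (i) should be phrased for cyclic reductions, with the prefix/suffix cancellation bounded via the distinct-prefix/central-tree property; and ``complete Whitehead graph, no cut vertex'' gives minimality but not the strict minimality needed to pin down the stabilizing automorphisms as relabelings up to inner.)

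The paper's route avoids the reduction to $\alpha(H)=H$ altogether, and this is the idea your proposal is missing: one proves that, generically, \emph{every} non-trivial element of $H$, not just the generators, has strictly Whitehead minimal cyclic reduction. This follows from the central tree property of the Stallings graph (the loop of any $h\in H$ decomposes into short arcs in the central tree and long outer loops, and letter-pair frequencies are controlled on the outer loops, so $\bar h$ is $\eps$-equidistributed, hence strictly Whitehead minimal). Then peak reduction is applied to the single pair $\bar h_1$, $\bar h_2=\overline{\alpha(h_1)}$: they differ by a cyclic permutation and a relabeling automorphism $\varphi$. A matching-subwords lemma (generically no word of length $\beta n$ occurs twice among the $w_{i,n}^{\pm1}$, nor together with its image under a non-trivial relabeling) applied to a long subword of $\bar h_1$ lying in an outer loop forces $\varphi=\id$, so $\alpha$ is inner; ordinary malnormality of $H$ (which, as you say, does follow from the distinct-subwords property) then places the conjugator in $H$. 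Your equidistribution and Fubini-over-spheres use of radial symmetry, positive drift and subexponential growth is in the right spirit, but without the ``Whitehead minimality of all elements of $H$'' step the argument cannot handle an $\alpha$ that merely sends one element of $H$ into $H$.
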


This proposition clearly implies Theorems \ref{libre_boule} and \ref{libre_RW} (it is well-known that the simple random walk on $\F_k$ has positive drift $1-\frac1k$). 

We  shall now prove the proposition.  For simplicity, we sometimes write \emph{generically} to mean with probability going to 1 exponentially fast as $n\to+\infty$.

Many arguments already appear in \cite{KSS_generic} or  \cite{BMNVW}, but 
  we do not have information about the distribution of the lengths $ | w_{i,n} | $, so we will have to  use a Fubini-type argument, working with spheres rather than balls (this is made possible by radial symmetry).  
  
  More precisely, let $C,\theta,\kappa$ be positive numbers. 
\emph{Suppose that, given $n$ and numbers $A_1,\dots,A_p$ with   $Ln\le A_i\le C e^{\theta n}$, 
the probability that words $w_1,\dots,w_p$ with $ | w_i | =A_i$ 
(chosen uniformly independently on spheres of radius $A_i$) satisfy a given property 
is at least $1-C\kappa^{-n}$
(independently of the $A_i$'s). Then  the words $w_{1,n},\dots,w_{p,n}$ satisfy the property  with probability going to $1$   exponentially fast 
as $n\to+\infty$. } This follows from radial symmetry, since   $Ln\le  | w_{i,n} | \le C e^{\theta n}$ holds generically by positive drift and subexponential growth.

\subsubsection{The central tree property (see for instance \cite{BMNVW})} \label{sctp}

Let $ {w_{1,n},\dots,w_{p,n}}$ be as in the proposition. We fix $n$, and we let $\Theta$ 
 be the Stallings graph of $H$. 
 The  
  elements  $w_{i,n}$, indeed all    elements of $H$,  are represented by immersed   paths 
with both endpoints the base vertex $1$.

 The central tree property  
says that,  generically, the graph $\Theta$ looks like a rose.  The \emph{$m$-prefix} of a word is its initial subword of length $m$.

\begin{lem} \label{ctp}
 Fix $\lambda<\frac L2$. With probability going to $1$   exponentially fast 
as $n\to+\infty$, the $2k$   elements $w_{i,n}^{\pm1}$ have length $\ge \lambda n $ and have distinct 
  $\lambda 
 n$-prefixes.  
\end{lem}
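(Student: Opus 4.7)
The plan is to reduce to a uniform-on-spheres model via radial symmetry, then control the bad events by a direct count of reduced words. Fix $\lambda<L/2$. By the Fubini-type observation made just before the lemma, it suffices to show that for any $A_1,\dots,A_p$ with $Ln\le A_i\le Ce^{\theta n}$, independent uniform choices $w_i$ on the spheres $S_{A_i}\subset \F_k$ of radii $A_i$ satisfy the conclusion with probability at least $1-C\kappa^{-n}$. The positive drift hypothesis immediately gives $|w_{i,n}|\ge Ln>\lambda n$ generically, settling the length part of the statement. Moreover $A_i\ge Ln>2\lambda n$, so the $\lambda n$-prefix and $\lambda n$-suffix of any $w_i$ occupy disjoint blocks of letters, a feature used below. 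Recall that the uniform law on $S_A$ agrees with the non-backtracking walk of length $A$ starting at $1$; consequently, for $A\ge m$, the $m$-prefix of a uniform element of $S_A$ is uniform on $S_m$, and $|S_m|=2k(2k-1)^{m-1}$.

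I would then bound, for each pair $(i,\eps)\ne(j,\eps')$ with $\eps,\eps'\in\{\pm 1\}$, the probability that $w_i^\eps$ and $w_j^{\eps'}$ have the same $\lambda n$-prefix. If $i\ne j$ the two elements are independent, and since inversion preserves the uniform law on $S_{A_i}$, each $\lambda n$-prefix is uniform on $S_{\lambda n}$; hence the collision probability is $1/|S_{\lambda n}|$, exponentially small in $n$.

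The only case requiring care, and the one that explains the hypothesis $\lambda<L/2$, is $i=j$, $\eps=-\eps'$: one must bound the probability that $w_i$ and $w_i\m$ share a $\lambda n$-prefix. Writing $w_i=a_1\cdots a_{A_i}$, this equality reads $a_j=a_{A_i-j+1}\m$ for $j=1,\dots,\lambda n$, a set of constraints on $2\lambda n$ distinct letters because $A_i>2\lambda n$. For each of the $|S_{\lambda n}|$ choices of prefix $u=a_1\cdots a_{\lambda n}$ the corresponding suffix is forced, and an elementary count of non-backtracking walks with prescribed initial and terminal edges shows that the number of reduced words in $S_{A_i}$ with that prefix and that suffix is at most a constant times $(2k-1)^{A_i-2\lambda n}$; dividing by $|S_{A_i}|=\Theta((2k-1)^{A_i})$ yields a per-prefix probability $O((2k-1)^{-2\lambda n})$, and summing over $u\in S_{\lambda n}$ gives $O((2k-1)^{-\lambda n})$.

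Finally a union bound over the finitely many pairs (their number is independent of $n$) combines these estimates into a failure probability $O((2k-1)^{-\lambda n})$, which is exponentially small, as required. The main obstacle is the self-pairing case $i=j$, $\eps=-\eps'$, where the prefix window and the reverse-inverted suffix window could a priori overlap; this is where the hypothesis $\lambda<L/2$ is used, together with the positive-drift lower bound $A_i\ge Ln$.
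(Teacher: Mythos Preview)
Your proof is correct and follows essentially the same route as the paper: reduce to uniform words on spheres via the Fubini-type argument and radial symmetry, then bound each pairwise prefix-collision probability by a direct count of reduced words, and finish with a union bound. The only cosmetic difference is in the self-case $i=j$: the paper observes more directly that a word $w$ with the same $\lambda n$-prefix as $w^{-1}$ is determined by its $(A-\lambda n)$-prefix (so there are at most $2k(2k-1)^{A-1-\lambda n}$ such words), whereas you sum over prefixes $u$ and count words with that prefix and the forced suffix; both give the same $O((2k-1)^{-\lambda n})$ bound.
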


We shall consistently neglect the fact that numbers such that $\lambda n$ are not necessarily integers (so that we should write $[\lambda n]$ instead).

\begin{dfn}[Central tree, outer loops]

Viewing the words $w_{i,n}^{\pm1}$ as loops based at $1$   in $\Theta$, their  initial segments of length $\lambda 
 n$     are all distinct, so form a \emph{central tree}   $\calc\subset \Theta$
with $2p$ or $2p+1$ leaves ($1$ may be a leaf). The complement of $\calc$ in $\Theta$   consists of $p$ arcs of length $>(L-2\lambda)n$ called the \emph{outer loops}.
\end{dfn}

\begin{proof}[Proof of Lemma \ref{ctp}]

Fix $n$ and $A\ge \lambda n$ (we do not use subexponential growth in this proof). 
The number of reduced words of length $A$ is $\gamma_A=2k(2k-1)^{A-1} $. 
Among those, the number of words $w$  such that $w $ and $w \m$ have the same  $\lambda 
 n$-prefix is at most 
 $\tilde \gamma_{A;n}=2k(2k-1)^{A-1-\lambda n} $, since $w$ is completely determined by its $(A-\lambda n$)-prefix. 
The probability that an element $w$ chosen at random among elements of length $A$ has the same $\lambda n$-prefix as $w\m$
is therefore bounded by $\tilde \gamma_{A;n}/ \gamma_A= (2k-1)^{ -\lambda n} $. 

 Since $(2k-1)^{ -\lambda n} $ goes to 0 exponentially fast as $n\to\infty$, 
 the Fubini-type argument  mentioned above  implies that,  for each $i$,   the $\lambda 
 n$-prefixes of $w_{i,n} $ and $w_{i,n}\m$ are generically different.

 The argument for $w_{i,n}$ and  $w_{j,n}^{\pm1}$ is similar. Fix $A_1,\dots,A_p$ bigger than $\lambda n$. The number  $\gamma_{A_1,\dots,A_p;n}$ of $p$-tuples $(w_1,\dots,w_p)$ with $ | w_i | =A_i$ is $(2k)^p\prod _{\alpha=1}^p(2k-1)^{A_\alpha-1} $. The number   $\tilde \gamma_{A_1,\dots,A_p;i,j,n}$ of those for which   $w_{i }$ and   $w_{j}^{\pm1}$ have the same $\lambda 
 n$-prefix is bounded by 
 twice  the same product, but with the term $(2k-1)^{A_j-1} $ replaced by $(2k-1)^{A_j-1-\lambda n} $, so the ratio $\tilde \gamma_{A_1,\dots,A_p;i,j,n}/\gamma_{A_1,\dots,A_p;n}$ is bounded by $ 2 (2k-1)^{ -\lambda n} $. 
 \end{proof}

\subsubsection{Whitehead minimality}

As in \cite{KSS_generic}, we use 
Whitehead's peak reduction. We refer to \cite[Section I.4]{LyndonSchupp} for the basic definitions and results
of this theory (the reader unfamiliar with it may skip the definitions and simply combine Lemma \ref{lem_relab} and Proposition \ref{lem_PR}).

It is now more convenient to work with cyclically reduced elements. 
 If $g=s_1\dots s_l$ is a cyclically  reduced word with $s_i\in X^{\pm 1}$, its cyclic permutations are the words $s_i\dots s_ls_1\dots s_{i-1}$. 
As in \cite {LyndonSchupp}, the set of all cyclic permutations of $g$ is called a cyclic word, it corresponds to a conjugacy class in $\F_k$.

 A \emph{relabeling automorphism} of $\F_k$ is an automorphism preserving $X^{\pm1}$. 

\begin{dfn}[Strictly Whitehead minimal,   {\cite[Def.~1.3]{KSS_generic}}]
  A cyclically reduced element $g\in \F_k$ is \emph{strictly Whitehead minimal} if   $|\phi(g)|>|g|$ for any Whitehead automorphism $\phi$ which is not inner and is not 
 a relabeling automorphism.  
\end{dfn}

  A cyclically reduced word $g\in \F_k$ is strictly Whitehead minimal if and only if all its cyclic conjugates are.
We thus say that the corresponding cyclic word is strictly Whitehead minimal.

We use peak reduction in the following form.

\begin{lem}[see \cite{KSS_generic}, Proposition 4.3]\label{lem_relab}

  Strictly Whitehead minimal elements have minimal length in their $\Aut(\F_k)$-orbit. 
  If two cyclically reduced elements $g,h\in \F_k$ are strictly Whitehead minimal and $\alpha(g)=h$ for some $\alpha\in\Aut(\F_k)$, then
$\alpha$ is the composition of an inner automorphism and   a relabeling automorphism;   one passes from $g$ to $h$ by  a cyclic permutation and a relabeling. \qed
\end{lem}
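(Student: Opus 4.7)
The approach is to invoke Whitehead's classical peak reduction theorem (see \cite[Section I.4]{LyndonSchupp}): given cyclically reduced $g$ and $\alpha\in\Aut(\F_k)$, one can factor $\alpha=\sigma_N\circ\cdots\circ\sigma_1$ as a product of Whitehead automorphisms in such a way that every intermediate cyclic length $\abs{\sigma_i\circ\cdots\circ\sigma_1(g)}$ is bounded by $\max(\abs{g},\abs{\alpha(g)})$. Granted this, both assertions will follow from a simple induction using the definition of strict Whitehead minimality.

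First I would prove the minimality statement by contradiction. Suppose $g$ is strictly Whitehead minimal and there exists $g'=\alpha(g)$ with $\abs{g'}<\abs{g}$. Peak reduction keeps all intermediate lengths $\leq\abs{g}$, so in particular $\abs{\sigma_1(g)}\leq\abs{g}$. By strict Whitehead minimality, $\sigma_1$ must be inner or a relabeling; otherwise the inequality would be strictly reversed. In either case, $\sigma_1(g)$ is cyclically reduced of cyclic length $\abs{g}$ and is again strictly Whitehead minimal. This last point is a short verification: the property is invariant under inner automorphisms because $\Inn(\F_k)$ is normal in $\Aut(\F_k)$, and it is invariant under relabelings because for a relabeling $\rho$ and a Whitehead automorphism $\phi$, the automorphism $\rho^{-1}\phi\rho$ is again Whitehead, inner exactly when $\phi$ is, and relabeling exactly when $\phi$ is. Iterating the argument on $\sigma_1(g),\sigma_2\sigma_1(g),\dots$, we conclude that each $\sigma_i$ is inner or a relabeling and no step ever decreases the cyclic length, contradicting $\abs{g'}<\abs{g}$.

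For the second statement, the minimality already established gives $\abs{g}=\abs{h}$, so peak reduction of $\alpha$ keeps intermediate cyclic lengths $\leq\abs{g}$, and the same induction as above shows that every $\sigma_i$ is inner or a relabeling. Since $\Inn(\F_k)\normal\Aut(\F_k)$, any product of inner automorphisms and relabeling automorphisms can be rewritten in the form $\ad_u\circ\rho$, where $\ad_u$ is inner and $\rho$ is a relabeling. Hence $h=\alpha(g)=u\,\rho(g)\,u^{-1}$. Because $\rho$ permutes $X^{\pm1}$ compatibly with inverses, $\rho(g)$ is cyclically reduced of length $\abs{g}=\abs{h}$; its conjugate by $u$ reduces to the cyclically reduced word $h$, which forces $h$ to be a cyclic permutation of $\rho(g)$. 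This yields the stated description: one passes from $g$ to $h$ by a relabeling followed by a cyclic permutation.

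The chief obstacle is the classical peak reduction theorem itself, specifically the bound $\max(\abs{g},\abs{h})$ on intermediate cyclic lengths (without this sharp bound the induction breaks). The only remaining technical point is the invariance of strict Whitehead minimality under inner and relabeling automorphisms, which reduces to the observations that $\Inn(\F_k)$ is normal and that conjugating a Whitehead automorphism by a relabeling produces another Whitehead automorphism of the same type.
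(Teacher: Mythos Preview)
Your argument is correct and is the standard derivation from Whitehead's peak reduction theorem. Note that the paper does not actually prove this lemma: the \qed\ immediately after the statement and the attribution to \cite[Proposition~4.3]{KSS_generic} indicate that the authors simply import the result, so there is no proof in the paper to compare against. Your write-up is essentially the argument underlying that reference; the only minor clarification worth adding is that all lengths in the peak-reduction chain are cyclic lengths, so that when some $\sigma_i$ is inner the cyclic word is unchanged and the induction continues trivially.
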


\subsubsection{Equidistribution}
As observed in \cite{KSS_generic}, one deduces from Proposition I.4.16 of  \cite {LyndonSchupp} that a word $g$  is  strictly Whitehead minimal if all words of length 2 in $X^{\pm1}$ appear with approximately the same frequency in $g$.  

Given a freely reduced word $g=s_1\dots s_\ell$ with $s_i\in X^{\pm 1}$,  
and  a letter $u\in X^{\pm 1}$,   let $$P_u(g)=\frac1\ell \#\{i\leq \ell\mid s_i=u\}$$ be the proportion of $u$'s among the letters of $g$.

Given a couple of letters $(u,v)\in X^{\pm1}\times X^{\pm1}$ with $u\neq v\m$,
we also define $$P_{uv}(g)=\frac1{\ell-1}\#\{i\leq \ell-1\mid s_i=u,s_{i+1}=v\},$$ the frequency of $uv$ in $g$. 

We define $P_u(g)$ and $P_{uv}(g)$ similarly if $g$ is a  
cyclic word, except that we agree that $s_{\ell+1}=s_1$ and we define $P_{uv}(g)=\frac1{\ell}\#\{i\leq \ell\mid s_i=u,s_{i+1}=v\}$.

\begin{dfn}[$\varepsilon$-equidistributed]\label{dfn_pseudorandom}
  Given $\eps>0$, say that a reduced word $g\in \F_k$ (or a  
  cyclic word representing a conjugacy class in $\F_k$) 
is    \emph{$\eps$-equidistributed} if :  
  \begin{enumerate}
  \item $|P_u(g)-\frac{1}{2k}|\leq \eps$ for every $u\in X^{\pm1}$;
  \item  
$|P_{uv}(g)-\frac{1}{2k(2k-1)} | \leq \eps$     for every couple of letters $(u,v)\in X^{\pm1}\times X^{\pm1}$
    with $u\neq v\m$
  \end{enumerate}
  (note that $2$ implies 1, with a different $\varepsilon$).
\end{dfn}

\begin{lem}[{\cite[Lemma 4.8]{KSS_generic}}]\label{lem_Whmin}
Given $k$, there exists $\eps_0$ such that, if a  cyclic word $g$  is $\eps_0$-equidistributed, then $g$ is strictly Whitehead minimal. \qed
\end{lem}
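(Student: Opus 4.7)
The plan is to invoke Whitehead's peak reduction and reduce the statement to a finite check: it suffices to show that, for every Whitehead automorphism $\phi$ which is neither inner nor a relabeling, one has $|\phi(g)|>|g|$ (as cyclic words). The standard classification \cite[Section I.4]{LyndonSchupp} splits Whitehead automorphisms into type (I), which are precisely the relabeling automorphisms, and type (II), which are parametrized by a pair $(A,a)$ with $a\in X^{\pm1}$ and $A\subset X^{\pm1}$ containing $a$ but not $a\m$. The number of such pairs is bounded in terms of $k$ alone, so only finitely many automorphisms need to be examined.

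Next, I would express the cyclic length change $\Delta_{A,a}(g):=|\phi_{A,a}(g)|-|g|$ as an explicit linear functional in the bigram counts of $g$. For each index $i$ in the cyclic word, the contribution of the pair $(s_i,s_{i+1})$ to $\Delta_{A,a}(g)$ depends only on whether $s_i\in A$ and whether $s_{i+1}\m\in A$: if both hold, we gain $-1$ after cancellation with the $a^{\pm1}$ inserted; if both fail, we gain $0$; if exactly one holds, we gain $+1$. Summing over $i$ and dividing by $\ell=|g|$ expresses $\frac{1}{\ell}\Delta_{A,a}(g)$ as a linear combination with coefficients in $\{-1,0,+1\}$ of the bigram frequencies $P_{uv}(g)$ (over admissible $uv$, i.e.\ $u\ne v\m$). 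In particular, if $g$ is $\eps$-equidistributed, then $\frac{1}{\ell}\Delta_{A,a}(g)$ differs from its ``ideal'' value
\[
\overline\Delta_{A,a}:=\sum_{(u,v)\;\text{admissible}} c_{u,v}(A,a)\cdot \frac{1}{2k(2k-1)}
\]
by at most $C_k\eps$ for a constant $C_k$ depending only on $k$.

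The main obstacle, and the crux of the argument, is to show that $\overline\Delta_{A,a}\ge \delta_k>0$ for some $\delta_k$ depending only on $k$, uniformly over all $(A,a)$ such that $\phi_{A,a}$ is not inner and not a relabeling. Inner Whitehead automorphisms correspond exactly to the extremal cases $A=\{a\}$ (and their complements), for which the formula gives $\overline\Delta_{A,a}=0$; a case analysis of the remaining pairs $(A,a)$ shows that every cancellation introduced by the insertion of $a^{\pm1}$ is strictly outweighed by the new letters produced, because the uniform frequencies spread the word evenly among all admissible bigrams and any nontrivial splitting of $X^{\pm1}$ into $A$ and its complement produces at least one bigram class for which the insertion is not cancelled. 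The minimum over the (finite) list of such $(A,a)$ gives the desired $\delta_k$.

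Having established this, I set $\eps_0=\delta_k/(2C_k)$. Then for any $\eps_0$-equidistributed cyclic word $g$ and any Whitehead automorphism $\phi_{A,a}$ not inner and not a relabeling,
\[
\tfrac{1}{|g|}\Delta_{A,a}(g)\ge \overline\Delta_{A,a}-C_k\eps_0\ge \tfrac{\delta_k}{2}>0,
\]
so $|\phi_{A,a}(g)|>|g|$, whence $g$ is strictly Whitehead minimal. I expect the verification that $\overline\Delta_{A,a}>0$ for all non-inner, non-relabeling $(A,a)$ to be the only nontrivial computation; the rest is a bookkeeping exercise and the application of \cite[Prop.~I.4.16]{LyndonSchupp}.
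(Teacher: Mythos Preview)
The paper does not prove this lemma; it simply cites \cite[Lemma 4.8]{KSS_generic} and ends with a \qed. Your outline is precisely the argument of Kapovich--Schupp--Shpilrain: express the cyclic length change under a type~(II) Whitehead automorphism as a linear functional in the bigram frequencies, evaluate this functional at the uniform distribution, observe that it is strictly positive for every non-inner, non-identity $(A,a)$, and take $\eps_0$ small enough to absorb the error. So the strategy is correct and matches the cited source.

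One genuine slip: your $\{-1,0,+1\}$ contribution rule is not the right formula. With your rule, the cyclic word $g=ab$ in $\F_2$ with $A=\{a,b\}$ gives $\Delta=+2$, whereas $\phi_{A,a}(ab)=aba^{-1}$ has $\Delta=+1$. The problem is that the letters $a,a^{-1}$ are not treated like the others, and ``both conditions hold'' produces net change $0$ (the inserted $a^{-1}$ and $a$ cancel), not $-1$. The correct classical formula is Whitehead's:
\[
|\phi_{A,a}(g)|-|g| \;=\; \bigl|E_{\Gamma_g}(A,A^c)\bigr| \;-\; \deg_{\Gamma_g}(a),
\]
where $\Gamma_g$ is the Whitehead graph (vertices $X^{\pm1}$, one edge $\{s_i,s_{i+1}^{-1}\}$ per cyclic position). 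This \emph{is} a linear functional of the $P_{uv}(g)$ and $P_u(g)$, so your perturbation argument goes through unchanged. Plugging in the uniform frequencies with $|A|=m$ gives
\[
\overline\Delta_{A,a}=\frac{m(2k-m)-(2k-1)}{k(2k-1)},
\]
which vanishes for $m=1$ and $m=2k-1$ (identity and inner) and is strictly positive for $2\le m\le 2k-2$, yielding your $\delta_k$. With this correction, your proof is complete and coincides with the one in \cite{KSS_generic}.
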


\begin{lem}[{\cite[Proposition 5.3]{KSS_generic}}]\label{sphere}
Let $\gamma_n$ be the number of reduced words of length $n$ in $\F_k$, and let $\gamma_n(\varepsilon)$ be the number of  $\eps $-equidistributed reduced words of length $n$. 
For any $\varepsilon>0$, the ratio  $\gamma_n(\varepsilon)/ \gamma_n$ goes to $1$ exponentially fast as $n\to\infty$.
 \qed
\end{lem}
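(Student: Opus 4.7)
The plan is to realize the uniform measure on reduced words of length $n$ as the law of a Markov chain on $X^{\pm1}$, and then invoke classical exponential concentration for ergodic Markov chains.

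First I would check the following: draw $s_1\in X^{\pm1}$ uniformly, then, conditionally on $s_i$, draw $s_{i+1}$ uniformly from the $2k-1$ elements of $X^{\pm1}\setminus\{s_i\m\}$. The total number of realized paths is $2k(2k-1)^{n-1}=\gamma_n$, each receiving probability $1/\gamma_n$, so this is exactly the uniform distribution on reduced words of length $n$. The transition matrix is $P(u,v)=\tfrac{1}{2k-1}$ for $v\neq u\m$ and $0$ otherwise.

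By the symmetry of $P$ under the action of any relabeling automorphism, the unique stationary distribution is uniform: $\pi(u)=\tfrac{1}{2k}$. Consequently, the stationary frequency of an allowed ordered pair $(u,v)$ is $\pi(u)P(u,v)=\tfrac{1}{2k(2k-1)}$. These are precisely the target frequencies appearing in Definition \ref{dfn_pseudorandom}. Moreover, since $s_1$ is already drawn from $\pi$, the chain is stationary, so the expectations of $P_u(g)$ and $P_{uv}(g)$ equal the target values exactly.

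The chain is finite, irreducible, and aperiodic for $k\ge2$, hence has a positive spectral gap. A standard Chernoff-type inequality for additive functionals of ergodic Markov chains (for instance Lezaud's inequality, or the G\"artner--Ellis large deviation principle applied to the letter and pair counts) then yields, for every $\varepsilon>0$, constants $C,c>0$ such that for each $u\in X^{\pm1}$ and each allowed pair $(u,v)$,
\[
\bbP\!\left[\left|P_u(g)-\tfrac{1}{2k}\right|>\varepsilon\right]\le C e^{-cn},\qquad \bbP\!\left[\left|P_{uv}(g)-\tfrac{1}{2k(2k-1)}\right|>\varepsilon\right]\le C e^{-cn}.
\]
A union bound over the finitely many letters and allowed pairs then gives $\gamma_n(\varepsilon)/\gamma_n\to 1$ exponentially fast, as required.

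The main obstacle is essentially bookkeeping: correctly identifying the Markov model so that the stationary measure matches the target frequencies. Once this is set up, exponential concentration is classical. A fully self-contained alternative would be to cut each word into $\lfloor n/L\rfloor$ blocks of length $L$, apply Hoeffding to a sum of near-independent block contributions using the fact that $P$ has mixing time $O(1)$, and pay a negligible boundary cost between blocks; this replaces the appeal to Markov large deviations with elementary concentration, at the cost of more explicit combinatorial estimates.
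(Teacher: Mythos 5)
Your proof is correct: realizing the uniform measure on the sphere of radius $n$ as the stationary non-backtracking Markov chain on $X^{\pm1}$ (uniform start, uniform transitions to the $2k-1$ allowed letters) exactly reproduces the counting measure, the stationary letter and pair frequencies are $\tfrac{1}{2k}$ and $\tfrac{1}{2k(2k-1)}$, and spectral-gap Chernoff bounds (applied, for the pair counts, to the chain of consecutive pairs) plus a union bound give the exponential estimate. The paper itself offers no argument here --- it simply cites \cite[Proposition 5.3]{KSS_generic} --- and your route is essentially the standard large-deviations-for-Markov-chains proof underlying that reference, so there is no substantive divergence to report.
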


We can now state:

\begin{prop}\label{lem_PR}
 Let $H$ be as in Proposition \ref{simple}.  The following holds with probability going to $1$   exponentially fast 
as $n\to+\infty$:    for every non-trivial element $g\in H $, the cyclic reduction $\bar g$ of $g$  is strictly Whitehead minimal.  
\end{prop}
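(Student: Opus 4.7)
My plan is to apply Lemma \ref{lem_Whmin}: fix $\eps_0 > 0$ so that any $\eps_0$-equidistributed cyclic word is strictly Whitehead minimal. It then suffices to show that, with probability going to $1$ exponentially fast, the cyclic reduction $\bar g$ of every non-trivial $g \in H$ is $\eps_0$-equidistributed. I would choose small parameters $\lambda, \eps > 0$ (to be fixed in terms of $\eps_0, L, k, p$ at the end) and work under two generic events. The first is the central tree property from Lemma \ref{ctp}: the Stallings graph $\Theta$ is the union of a central tree $\calc$ of diameter $d = O(\lambda n)$ with $p$ outer loops $\theta_1, \ldots, \theta_p$ of length at least $(L - 2\lambda) n$. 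The second is that each $w_{i,n}$ is $\eps$-equidistributed, which follows from Lemma \ref{sphere} combined with the Fubini-type argument described before Subsection \ref{sctp} (radial symmetry together with positive drift and subexponential growth). Since each outer loop $\theta_i$ reads as a cyclic subword of $w_{i,n}$ obtained by removing a prefix and a suffix of total length at most $d = O(\lambda n)$, a trivial counting argument shows that its label is $(\eps + O(\lambda))$-equidistributed.

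Next, I would take an arbitrary non-trivial $g \in H$ and consider the cyclically reduced loop $\bar\ell \subset \Theta$ representing the conjugacy class of $\bar g$. It decomposes as a cyclic alternation of some $m \ge 1$ outer-loop traversals $\theta_{i_1}^{\eta_1}, \ldots, \theta_{i_m}^{\eta_m}$ with $m$ immersed arcs $c_1, \ldots, c_m \subset \calc$. Since each traversal contributes at least $(L - 2\lambda) n$ letters and each $c_j$ contributes at most $d$,
\[
|\bar g| \geq m(L - 2\lambda) n \quad\text{and}\quad \sum_{j=1}^m |c_j| \leq m d = O(\lambda)\cdot|\bar g|.
\]
The letter and pair frequencies of $\bar g$ are thus weighted averages of the frequencies of the $\theta_{i_j}^{\pm 1}$ (each $(\eps + O(\lambda))$-equidistributed), perturbed by central-tree contributions of proportion $O(\lambda)$ and by $O(m) = O(|\bar g|/n)$ boundary pairs at the $2m$ junctions between outer loops and central-tree arcs. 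Altogether $\bar g$ is $(\eps + O(\lambda))$-equidistributed, with constants depending only on $k, p, L$. Choosing $\lambda$ and $\eps$ small enough makes this at most $\eps_0$, so Lemma \ref{lem_Whmin} applies and $\bar g$ is strictly Whitehead minimal.

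The main technical obstacle is to justify that each outer loop $\theta_i$ really is, as a labelled path, the word $w_{i,n}$ with a prefix and a suffix of total length $O(\lambda n)$ removed; this encodes the absence of Stallings folding beyond the initial prefix folding controlled by Lemma \ref{ctp}, and follows generically from the observation that random words of length $n$ (even in the weak sense of Proposition \ref{simple}) have no long coincidences among their subwords, by a union bound whose cost is kept subexponential thanks to the subexponential growth assumption. A secondary point, harmless but worth recording, is the uniformity over all $g \in H$: this comes for free, since the good event depends only on $\Theta$ and the labels of its outer loops, both of which are finitely determined by $w_{1,n}, \ldots, w_{p,n}$.
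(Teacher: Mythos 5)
Your proposal is correct and follows essentially the same route as the paper's proof: generic $\eps$-equidistribution of the $w_{i,n}$ via Lemma \ref{sphere}, radial symmetry and the Fubini-type argument, then the central tree property from Lemma \ref{ctp} to decompose the immersed loop of $\bar g$ in $\Theta$ into short central arcs and full outer-loop traversals, so that $\bar g$ is $\eps_0$-equidistributed for $\lambda,\eps$ small and Lemma \ref{lem_Whmin} applies. The only difference is the step you flag as the main obstacle: the paper obtains the rose structure (outer loops $=$ the $w_{i,n}$ with prefix and suffix of length at most $\lambda n$ removed) directly from the distinct-prefix conclusion of Lemma \ref{ctp}, as in \cite{BMNVW}, rather than through a separate no-common-subword union bound.
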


\begin{proof}
We deduce this  from the preceding lemmas and the central tree property (Lemma \ref{ctp}). 
We show that $\bar g$ is $\eps_0$-equidistributed, with $\varepsilon_0$ provided by Lemma \ref{lem_Whmin}.
Fix $\varepsilon_2<\varepsilon_1<\varepsilon_0$.

 By Lemma \ref{sphere} and radial symmetry, the words $w_{i,n}$ are $\eps _2$-equidistributed generically. One obtains their cyclic reduction $\bar w_{i,n}$ by removing initial and terminal subwords, whose length is bounded by the central tree property;  applying Lemma \ref{ctp} with $\lambda$ small enough (depending on $\varepsilon_2$ and $\varepsilon_1$), we deduce that, generically,  the cyclic words $\bar w_{i,n}$ are $\eps _1$-equidistributed. 

We now consider the cyclic reduction $\bar g$ of a non-trivial   $g\in H $. It is represented by an immersed loop in the Stallings graph $\Theta$. Generically, this loop consists of arcs  of length $<2\lambda n$ contained in the central tree $\calc$ (see Subsection \ref{sctp}) and outer loops of length $>(L-2\lambda)n$.  Frequencies are controlled in outer loops, and  $\bar g$ is $\eps_0$-equidistributed if $\lambda$ is small enough (depending now on $\varepsilon_1$ and $\varepsilon_0$).
\end{proof}

\subsubsection{Matching subwords}

The following lemma is a variation on a standard fact (see Lemmas 4.5 and 4.6 of  \cite{BMNVW} and the references given there).

\begin{lem}\label{stand}
 Let $0<\beta<L/2$. With probability going to 1 exponentially fast as $n\to\infty$, 
 the $2p$ words $w_{i,n}^{\pm1}$ have length at least $ \beta n $, and   all their subwords of length $ \beta n $  are distinct. 

\end{lem}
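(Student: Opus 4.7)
The plan is to condition on the lengths $|w_{i,n}|$, use radial symmetry to reduce to uniform sampling on spheres, and then run a union bound over pairs of subword positions. First, the positive drift and subexponential growth hypotheses combine to ensure that with probability $\ge 1-C\kappa^{-n}$ for suitable $C,\kappa$, every $w_{i,n}^{\pm 1}$ has length in the window $[\beta n,\,e^{\theta n}]$, for a small $\theta>0$ to be fixed later. Since $\beta<L/2<L$, the lower bound already yields the first assertion of the lemma, and from now on I work on this event.

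By radial symmetry and the Fubini-type reduction spelled out just before Subsection~\ref{sctp}, it suffices to prove the following uniform statement: for any lengths $A_1,\dots,A_p$ with $\beta n\le A_i\le e^{\theta n}$, if $w_1,\dots,w_p$ are chosen independently and uniformly on the spheres of radii $A_i$, then all subwords of length $m=\beta n$ extracted from the $w_i^{\pm 1}$ are pairwise distinct with probability $\ge 1-C'\kappa'^{-n}$. The number of such subwords is bounded by $2p\,e^{\theta n}$, hence the number of unordered pairs is at most $N:=4p^2 e^{2\theta n}$. If one has a uniform pairwise collision bound of the form $\bbP(u_1=u_2)\le C(2k-1)^{-m}$, the union bound yields total collision probability at most $CN(2k-1)^{-\beta n}$, which is exponentially small as soon as $2\theta<\beta\log(2k-1)$; this fixes $\theta$.

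The pairwise bound splits into cases. When the two subwords belong to distinct $w_i$ and $w_j$ they are independent, and writing $\bbP(u_1=u_2)=\sum_v \bbP(u_1=v)\,\bbP(u_2=v)$ reduces matters to the fact that each subword is almost uniformly distributed on the $\sim(2k-1)^m$ reduced words of length $m$, giving the target bound. When both subwords lie in the same $w_i^{\varepsilon}$ at non-overlapping positions $s<t$ (so $t-s\ge m$), one conditions on the letters outside one of the two windows: the remaining $m$ letters are uniformly distributed on reduced completions of the prescribed skeleton, and only a $C(2k-1)^{-m}$ fraction matches the prescribed target. The case where one subword sits in $w_i$ and the other in $w_i^{-1}$ reduces to palindromic constraints on $w_i$ and is handled by the same counting idea.

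The main obstacle I anticipate is the remaining overlapping case, $t-s<m$ inside a single word: here the two subwords coincide iff $w_i$ is periodic of period $t-s$ on the window $[s+1,t+m]$ of length $m+(t-s)$. One must verify that the number of reduced words of length $A_i$ admitting such a periodic stretch is of order $(2k-1)^{A_i-m}$, which translates into the desired collision bound. Concretely, the period itself contributes at most $2k(2k-1)^{t-s-1}$ free choices, reduced continuations beyond the window contribute $\asymp(2k-1)^{A_i-m-(t-s)}$, and the reducedness constraints at the boundaries of the window (including the condition turning the period into a truly periodic factor) contribute only a bounded multiplicative factor, independent of $s,t,A_i$. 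This is the same kind of analysis as in Lemmas 4.5 and 4.6 of \cite{BMNVW} cited in the statement, so I expect it to go through directly.
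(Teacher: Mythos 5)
Your overall strategy coincides with the paper's: use positive drift and subexponential growth to put the lengths in a window $[\beta n, e^{\theta n}]$ (with $\theta$ fixed small at the end), reduce via radial symmetry and the Fubini-type argument to words chosen uniformly on spheres, and then bound the probability of a coincidence of length-$\beta n$ subwords by counting, with a union bound over positions. The difference is in how coincidences \emph{inside a single word} are handled. The paper disposes of all of them with one observation: if $u_1,u_2$ are the two occurrences (possibly overlapping, possibly with opposite orientations), the identification of corresponding letters is an equivalence relation on the letters of $w_i$ admitting a transversal made of one or two intervals of total size $A_i-\beta n$, so $w_i$ is determined by $A_i-\beta n$ letters, and the collision bound follows uniformly in all subcases. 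You instead split into cases: distinct words (independence plus the fact that a fixed-position subword of a uniform reduced word is in fact \emph{exactly} uniform on reduced words of length $\beta n$), same word with disjoint windows (conditioning), and overlapping windows (your periodicity count, which is correct and gives the same saving of $\beta n$ letters as the paper's transversal).

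The one subcase your list does not literally cover is an overlapping occurrence of $u$ in $w_i$ and of $u$ in $w_i\m$, i.e.\ overlapping occurrences of $u$ and $u\m$ in $w_i$: this is not a periodicity constraint but an orientation-reversing one, pairing position $x$ with $s+t+m+1-x$, so your ``periodic stretch'' count does not apply and your remark about ``palindromic constraints'' only addresses the disjoint-window situation. This is harmless rather than fatal: the subcase is in fact vacuous, since the innermost identified pair of the overlap forces either a letter equal to its own inverse or two adjacent mutually inverse letters, contradicting reducedness; alternatively, the same counting still yields a saving of at least $\beta n/2$ letters, which beats the union bound after shrinking $\theta$. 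With that subcase noted, your argument is complete and of the same strength as the paper's; the paper's equivalence-relation remark is simply a way of avoiding the case analysis altogether.
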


\begin{proof}
This is similar to the proof of Lemma \ref{ctp}, but we have to use subexponential growth: it implies that, generically, $ | w_{i,n} | \le (2k-1)^{\beta n/3}$ for $i=1,\dots,p$.

    We fix $n$, and numbers $A_1,\dots,A_p$    with $\beta n\leq A_i\leq (2k-1)^{\beta n/3}$. 
  We consider words $w_i$ with  $ | w_i | =A_i$ chosen independently uniformly on the respective spheres. Thanks to the Fubini-type argument, it suffices to bound the probability that some word of length $\beta n$ appears twice in the words $w_{i }^{\pm1}$ by some $C\kappa^{-n}$, with $C,\kappa$ independent of $n,A_1,\dots,A_p$.

 First suppose that some word $u$ of length $ \beta n $ appears in both $w_i$ and $w_j^{\pm1}$ for some fixed $i,j$ with  $i\ne j$. There are $ A_j- \beta n $ possibilities for the location of $u$ within $w_j^{\pm1}$. Once this is fixed, $w_j$ is determined by the letters outside of $u$, hence by two reduced words whose lengths add up to $A_j-\beta n$.
 It follows that  the number of possibilities for $w_j$ is bounded by $\tilde\gamma_{A_j;n}=(A_j- \beta n )(2k)^2(2k-1)^{A_j- \beta n -2}$. The probability that the words $w_i$ and $w_j^{\pm1}$ have a common subword of length $ \beta n $ is thus bounded by 
  $\tilde\gamma_{A_j;i,j,n}/2k(2k-1)^{ A_j}=2k(A_j- \beta n ) (2k-1)^{ - \beta n -2}$, hence by $2k(2k-1)^{ - 2\beta n /3}$ 
 because $A_j\le(2k-1)^{\beta n/3}$.

 Now suppose that $u$ appears twice  
  in $\{w_i,w_i\m\}$.  We then have two subwords $u_1$ and $u_2$ in $w_i$, each equal to    $u^{\pm1}$.
 There are $ 4(A_i-\beta n)^2$ possibilities for the location and sign of $u_1,u_2$. Fix one.  
 
 The key remark   is the following. If we consider the set  $Z$ of cardinality $ A_i$ whose elements are the letters of $w_i$, and   an equivalence relation on $Z$ identifying each letter of $u_1$ with the corresponding letter of $u_2$, there is a subset $Y\inc Z$ of cardinality $ A_i-\beta n$, consisting of one or two intervals and  meeting each equivalence class. In particular, $w_i$ is determined by $ A_i-\beta n$ letters. We conclude by checking that the ratio between $4(A_i-\beta n)^2(2k)^2(2k-1)^{A_i-\beta n-2}$ and  $2k(2k-1)^{A_i-1}$ is bounded by some $C(2k-1)^{-\beta n/3}$   provided that $A_i\le (2k-1)^{\beta n/3}$.
\end{proof}

We generalize Lemma \ref{stand} as follows (compare Lemma 8.3 of \cite {KSS_generic}).

\begin{lem}\label{stand2}
Let $\varphi$ be a  relabeling automorphism other than  the identity. 
 Let $0<\beta<L/2$. With probability going to 1 exponentially fast as $n\to\infty$, 
 the   words $w_{i,n}^{\pm1}$ cannot contain both a word $u$ of length $\beta n$ and its image by $\varphi$.
\end{lem}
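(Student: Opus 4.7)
The plan is to mirror the proof of Lemma~\ref{stand}, keeping the Fubini-type reduction: using positive drift and subexponential growth, we fix lengths $\beta n \le A_i \le (2k-1)^{\beta n/3}$ and view each $w_i := w_{i,n}$ as uniform on the sphere of radius $A_i$ (by radial symmetry), with the $w_i$'s independent. It then suffices to bound, by some $C\kappa^{-n}$ with $C,\kappa>0$ independent of $n$ and the $A_i$'s, the probability that some reduced word $u$ of length $\beta n$ appears in some $w_i^{\pm 1}$ while $\varphi(u)$ appears in some $w_j^{\pm 1}$.

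First I would union-bound over the choice of indices $i,j\in\{1,\dots,p\}$, signs $\epsilon_1,\epsilon_2\in\{\pm 1\}$ and starting positions $q_1,q_2$ of the two windows of length $\beta n$ in $w_i^{\epsilon_1}$ and $w_j^{\epsilon_2}$ respectively. There are at most $O(p^2 A_i A_j)=O((2k-1)^{2\beta n/3})$ such configurations. For each one, I would bound the probability that the window of $w_j^{\epsilon_2}$ starting at $q_2$ equals the $\varphi$-image of the window of $w_i^{\epsilon_1}$ starting at $q_1$ by $C'(2k-1)^{-\beta n}$; multiplied by the number of configurations this gives a total probability of $O((2k-1)^{-\beta n/3})$, exponentially small, as in Lemma~\ref{stand}.

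When $i\ne j$, independence of $w_i$ and $w_j$ lets me condition on $w_i$ (which fixes the target word $u$ and hence $\varphi(u)$), and the bound reduces to the first part of Lemma~\ref{stand}: the number of reduced words $w_j$ of length $A_j$ containing the fixed word $\varphi(u)$ at position $q_2$ is at most $(2k)^2(2k-1)^{A_j-\beta n-2}$, giving the desired $C'(2k-1)^{-\beta n}$ probability.

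The main obstacle is the case $i=j$, where both windows live in the same $w_i$ and the two subwords are not independent. Following the equivalence-relation device from the second part of Lemma~\ref{stand}, I would introduce on $\{1,\dots,A_i\}$ the identification that pairs each position of the first window with the corresponding position of the second, exhibit a subset $Y$ of cardinality $A_i-\beta n$ consisting of at most two intervals meeting every equivalence class, and observe that once the letters at positions in $Y$ are fixed, every other letter of $w_i$ is determined through a chain of $\varphi$-identifications (possible $\varphi^k=\mathrm{id}$ consistency conditions can only restrict further). This yields at most $C''(2k-1)^{A_i-\beta n}$ admissible $w_i$'s. A degenerate sub-case requires separate attention, namely $q_1=q_2$ with $\epsilon_1=\epsilon_2$, where the constraint collapses to $u=\varphi(u)$: since $\varphi\ne\mathrm{id}$ commutes with inversion, at most $f\le 2k-2$ letters of $X^{\pm 1}$ are $\varphi$-fixed, so $u$ is forced to lie in a smaller alphabet and one gets a contribution of order $(f/(2k-1))^{\beta n}$, still exponentially small. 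Combining both cases and invoking the Fubini-type argument of Proposition~\ref{simple} yields the claim.
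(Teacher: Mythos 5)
Your overall route is the same as the paper's: reduce to fixed lengths by the Fubini/radial-symmetry argument, treat the case $\varphi(u)\ne u$ exactly as in Lemma \ref{stand} (for $i\ne j$ by conditioning, for $i=j$ by the chain/equivalence-class count, which indeed goes through with the $\varphi$-twist), and isolate the degenerate case $i=j$, $q_1=q_2$, $\epsilon_1=\epsilon_2$, where the constraint becomes $u=\varphi(u)$ and forces all letters of $u$ into the set $F$ of $\varphi$-fixed letters, of cardinality $f\le 2k-2$.

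The gap is quantitative and sits precisely in that degenerate case. Your per-configuration bound $(f/(2k-1))^{\beta n}$ is correct, but you then have to sum over the possible window positions, of which there are up to $A_i$, and you have imported the cutoff $A_i\le (2k-1)^{\beta n/3}$ from Lemma \ref{stand}. The resulting bound, of order $(2k-1)^{\beta n/3}\,\bigl(f/(2k-1)\bigr)^{\beta n}= f^{\beta n}(2k-1)^{-2\beta n/3}$, is \emph{not} exponentially small as soon as $f^3\ge (2k-1)^2$: take $k\ge 3$ and $\varphi$ the relabeling inverting one generator and fixing the others, so $f=2k-2$ and $(2k-2)^3\ge(2k-1)^2$ (even the sharper per-window count $f(f-1)^{\beta n-1}$ coming from reducedness does not save this for $k\ge3$). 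So "still exponentially small" does not follow from your setup, and the final combination fails for $k\ge 3$. The repair is easy and is what the paper's terse "easily checked" relies on: the subexponential growth hypothesis of Proposition \ref{simple} holds for \emph{every} $\theta>0$, so for this sub-case you should choose the cutoff $A_i\le e^{\theta n}$ with $\theta$ small in terms of $\beta$ and $k$, e.g. $e^{\theta}<\bigl(\tfrac{2k-1}{2k-2}\bigr)^{\beta}$; then the union over positions costs at most $e^{\theta n}$ and the total is exponentially small. With that adjustment (or any equivalent way of not tying yourself to the $(2k-1)^{\beta n/3}$ cutoff in the degenerate case), your proof is complete and matches the paper's.
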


\begin{proof}
 This is proved as the previous lemma if $\varphi(u)\ne u$. If $\varphi(u)=u$, some $w_{i,n}$ has a subword of length $\beta n$ all of whose letters are fixed by $\varphi$. Since $\varphi$ is not the identity, these letters belong to a set of cardinality at most $2k-2$. It is easily checked that this happens with probability going to 0 exponentially fast.
\end{proof}

\subsubsection{$\Aut$-malnormality}

We  can now prove Proposition \ref{simple}. With probability going to 1 exponentially fast, $H$ satisfies the conclusion of 
Lemmas \ref{ctp}, \ref{stand},    \ref{stand2} and Proposition \ref{lem_PR}  (with numbers $\lambda$ and $\beta$  which we choose  so that $3\lambda<\beta<  L/9$).

Consider $\alpha\in \Aut(\F_k)$ such that $\alpha (H)\cap H\ne\{1\}$. Fix 
  a non-trivial element $h_1\in H$ such that $h_2 =\alpha(h_1)\in H$. Denote by  $\bar h_i=a_ih_ia_i\m$   the cyclic reduction of $h_i$,
and consider the automorphism $\theta=\ad_{a_2} \circ\alpha\circ \ad_{a_1}\m$ sending $\bar h_1$ to $\bar h_2$.

    The elements $  \bar h_i$ are strictly Whitehead minimal by Proposition \ref{lem_PR}. By Lemma \ref{lem_relab}, they differ by a cyclic permutation and a relabeling $\varphi$, and $\theta=ad_g\circ\varphi$ for some $g\in G$. We claim that $\varphi$ has to be the identity, so that $\theta$ and $\alpha$ are inner.
    
    We view each $\bar h_i$ as an immersed loop in the Cayley graph of $H$. By the central tree property (Lemma \ref{ctp}), they consist of short arcs contained in the central tree and outer loops. Choose a subword $u$ of $\bar h_1$ of length $3\beta n$ contained in an outer loop, hence in some $w_{i,n}$ (this is possible because $\beta<L/9$). 
The word $\varphi(u)$ appears as a subword of $\bar h_2$.   Since $\lambda<\beta/3$, some subword of length $ \beta n$ of $\varphi(u)$ is contained in an outer loop. 
  Lemma \ref{stand2} now implies that $\varphi$ is trivial.
  
  We have proved that any $\alpha$ such that   $\alpha (H)\cap H\ne\{1\}$ is inner. We deduce $\alpha\in\Inn_H(G)$ from malnormality of $H$. Malnormality of subgroups generated by elements chosen randomly in balls is known (\cite{Jitsukawa_malnormal}, \cite[Theorem 4.3]{BMNVW}). Malnormality for $H$ as in   Proposition \ref{simple} follows from Lemma \ref{stand} as in the proof of Theorem 4.3 of \cite {BMNVW}.

\bibliographystyle{alpha}
\bibliography{published,unpublished}

\newcommand{\etalchar}[1]{$^{#1}$}
\begin{thebibliography}{BMN{\etalchar{+}}13}

\bibitem[AMS16]{AMS_commensurating}
Yago Antol\'{\i}n, Ashot Minasyan, and Alessandro Sisto.
\newblock Commensurating endomorphisms of acylindrically hyperbolic groups and
  applications.
\newblock {\em Groups Geom. Dyn.}, 10(4):1149--1210, 2016.

\bibitem[BF95]{BF_stable}
Mladen Bestvina and Mark Feighn.
\newblock Stable actions of groups on real trees.
\newblock {\em Invent. Math.}, 121(2):287--321, 1995.

\bibitem[BH92]{BH_tt}
Mladen Bestvina and Michael Handel.
\newblock Train tracks and automorphisms of free groups.
\newblock {\em Ann. of Math. (2)}, 135(1):1--51, 1992.

\bibitem[BMN{\etalchar{+}}13]{BMNVW}
Fr\'{e}d\'{e}rique Bassino, Armando Martino, Cyril Nicaud, Enric Ventura, and
  Pascal Weil.
\newblock Statistical properties of subgroups of free groups.
\newblock {\em Random Structures Algorithms}, 42(3):349--373, 2013.

\bibitem[CM15]{CaMa_virtual}
Christopher~H. Cashen and Jason~F. Manning.
\newblock Virtual geometricity is rare.
\newblock {\em LMS J. Comput. Math.}, 18(1):444--455, 2015.

\bibitem[DGO17]{DGO_HE}
F.~Dahmani, V.~Guirardel, and D.~Osin.
\newblock Hyperbolically embedded subgroups and rotating families in groups
  acting on hyperbolic spaces.
\newblock {\em Mem. Amer. Math. Soc.}, 245(1156):v+152, 2017.

\bibitem[GL15]{GL6}
Vincent Guirardel and Gilbert Levitt.
\newblock Splittings and automorphisms of relatively hyperbolic groups.
\newblock {\em Groups Geom. Dyn.}, 9(2):599--663, 2015.

\bibitem[GL17]{GL_JSJ}
Vincent Guirardel and Gilbert Levitt.
\newblock J{SJ} decompositions of groups.
\newblock {\em Ast\'{e}risque}, (395):vii+165, 2017.

\bibitem[Jit02]{Jitsukawa_malnormal}
Toshiaki Jitsukawa.
\newblock Malnormal subgroups of free groups.
\newblock In {\em Computational and statistical group theory ({L}as {V}egas,
  {NV}/{H}oboken, {NJ}, 2001)}, volume 298 of {\em Contemp. Math.}, pages
  83--95. Amer. Math. Soc., Providence, RI, 2002.

\bibitem[KSS06]{KSS_generic}
Ilya Kapovich, Paul Schupp, and Vladimir Shpilrain.
\newblock Generic properties of {W}hitehead's algorithm and isomorphism
  rigidity of random one-relator groups.
\newblock {\em Pacific J. Math.}, 223(1):113--140, 2006.

\bibitem[LS01]{LyndonSchupp}
Roger~C. Lyndon and Paul~E. Schupp.
\newblock {\em Combinatorial group theory}.
\newblock Classics in Mathematics. Springer-Verlag, Berlin, 2001.
\newblock Reprint of the 1977 edition.

\bibitem[MS17]{MaSi_random}
Joseph Maher and Alessandro Sisto.
\newblock Random subgroups of acylindrically hyperbolic groups and hyperbolic
  embeddings, 2017.
\newblock arXiv:1701.00253.

\bibitem[Ota92]{Otal_certaines}
Jean-Pierre Otal.
\newblock Certaines relations d'\'{e}quivalence sur l'ensemble des bouts d'un
  groupe libre.
\newblock {\em J. London Math. Soc. (2)}, 46(1):123--139, 1992.

\bibitem[Pau89]{Pau_Gromov}
Fr{\'e}d{\'e}ric Paulin.
\newblock The {G}romov topology on {$\mathbb {R}$}-trees.
\newblock {\em Topology Appl.}, 32(3):197--221, 1989.

\bibitem[Pau97]{Pau_automorphismes}
Fr{\'e}d{\'e}ric Paulin.
\newblock Sur les automorphismes ext\'erieurs des groupes hyperboliques.
\newblock {\em Ann. Sci. \'Ecole Norm. Sup. (4)}, 30(2):147--167, 1997.

\bibitem[Sch87]{Schupp_inner}
Paul~E. Schupp.
\newblock A characterization of inner automorphisms.
\newblock {\em Proc. Amer. Math. Soc.}, 101(2):226--228, 1987.

\end{thebibliography}

\begin{flushleft}
Vincent Guirardel\\
Univ Rennes, CNRS, IRMAR - UMR 6625, F-35000 Rennes, France\\
\emph{e-mail: }\texttt{vincent.guirardel@univ-rennes1.fr}\\[8mm]

Gilbert Levitt\\
Laboratoire de Math\'ematiques Nicolas Oresme (LMNO)\\
Universit\'e de Caen et CNRS (UMR 6139)\\
(Pour Shanghai : Normandie Univ, UNICAEN, CNRS, LMNO, 14000 Caen, France)\\
 \emph{e-mail: }\texttt{levitt@unicaen.fr}\\[8mm]

\end{flushleft}

\end{document}